\documentclass[reqno,12pt]{amsart}

\usepackage{geometry,amsmath,amsfonts,amssymb,amsthm,amscd,mathrsfs,graphicx,enumerate,multicol,wrapfig,subfigure,hyperref,stmaryrd, accents}
\usepackage[all]{xy}
\usepackage{setspace}
\linespread{1.15}

% 

% PAGE GEOMETRY
%
%\geometry{a4paper} 
%\pdfpagewidth 8.3in
%\pdfpageheight 11.7in 
%\setlength\topmargin{0in}
%\setlength\headheight{0in}
%\setlength\headsep{0in}
%\setlength\oddsidemargin{0.25in}
%\setlength\evensidemargin{0.25in}
%\setlength\textheight{9.5in}
%\setlength\textwidth{6in}
%\renewcommand{\baselinestretch}{1.0}

%SPECIFIED FONT

\usepackage[T1]{fontenc}
%\usepackage[bitstream-charter]{mathdesign}

% OTHER FORMATTING

\numberwithin{equation}{subsection}

% GENERAL MATHS

\newcommand{\ra}{\rightarrow}

\newcommand{\lra}{\longrightarrow}
\newcommand{\8}{\infty}
\newcommand{\p}{\prime}

% GREEK LETTERS

\newcommand{\al}{\alpha}

\newcommand{\om}{\omega}

\newcommand{\vp}{\varphi}
\newcommand{\lam}{\lambda}

\newcommand{\q}{\theta}

\newcommand{\be}{\beta}
\newcommand{\dt}{\delta}

%COMMON SETS

\newcommand{\Zbb}{\mathbb{Z}}
\newcommand{\Pbb}{\mathbb{P}}
\newcommand{\Cbb}{\mathbb{C}}

% THEOREM ENVIRONMENTS 

\theoremstyle{plain} 

\newtheorem{THM}{Theorem}[section]
\newtheorem{DEF}[THM]{Definition}
\newtheorem{EX}[THM]{Example}

\newtheorem{PROP}[THM]{Proposition}
\newtheorem{LEM}[THM]{Lemma}

\newtheorem{REM}[THM]{Remark}

\newtheorem*{LIFT}{Lifting Property}

% DOCUMENT SPECIFIC COMMANDS

\newcommand{\bt}{\bullet}

\newcommand{\id}{\mathrm{id}}

\newcommand{\Vc}{\mathcal{V}}

\newcommand{\Oc}{\mathcal{O}}
\newcommand{\red}{\mathrm{red}}
\newcommand{\Hom}{\mathrm{Hom}}

\newcommand{\Jc}{\mathcal{J}}

\newcommand{\Gc}{\mathcal{G}}

\newcommand{\Fc}{\mathcal{F}}

\newcommand{\Abb}{\mathbb{A}}

\newcommand{\Xfr}{\mathfrak{X}}
\newcommand{\Ec}{\mathcal{E}}

\newcommand{\Ufr}{\mathfrak{U}}

\newcommand{\Zc}{\mathcal{Z}}

\newcommand{\Yfr}{\mathfrak{Y}}
\newcommand{\Ic}{\mathcal{I}}
\newcommand{\Qcl}{\mathcal{Q}}

%COLORS
\usepackage{xcolor}
\definecolor{airforceblue}{rgb}{0.36, 0.54, 0.66}
\definecolor{burgundy}{rgb}{0.5, 0.0, 0.13}
\definecolor{majorelleblue}{rgb}{0.38, 0.31, 0.86}
\definecolor{darkblue}{rgb}{0.0, 0.0, 0.55}

\hypersetup{urlcolor=burgundy, linkcolor=burgundy,
citecolor=darkblue,
colorlinks=true}

%

% TITLE PAGE

\title{
Embeddings of Complex Supermanifolds 
\\}
\author{\small Kowshik Bettadapura}
\date{}

% BEGIN DOCUMENT

\begin{document}
\maketitle

\begin{abstract} 
In this article we present a study of embeddings of complex supermanifolds. We are broadly guided by the question: when will a submanifold of a split supermanifold itself be split? As an application of our study, we will address this question for certain superspace embeddings over rational normal curves. 
\end{abstract}

\setcounter{tocdepth}{1}
\tableofcontents

\onehalfspacing

\section*{Introduction}

\noindent
One of the central questions in complex supermanifold theory, pertaining to classification, is the splitting question: \emph{given a complex supermanifold, is it split or non-split?} To address this question it suffices to show that any obstruction to the existence of a splitting will vanish. These obstructions are certain cohomology classes concentrated in degree one and can be directly related to glueing data. Hence they can form a basis on which to classify complex supermanifolds. 
\\\\
From treatments of the splitting question, as in \cite{YMAN}, one learns that representatives for obstruction classes can be obtained by suitably differentiating transition functions. Transition functions can however be laborious to derive and depend on a host of extraneous data, such as a covering and chart maps. To get further insight on the splitting question, it would be desirable to find alternative methods to describe these obstructions. The method promoted in this article is the following: if we have a supermanifold $\Yfr$ and want to show it is split, embed it into a \emph{split} supermanifold $\Xfr$ and try to inherit a splitting of $\Yfr$ from the given splitting of $\Xfr$.
\\\\
The central ideas in this article can find their inspiration in the work of Donagi and Witten in \cite{DW1}, where they prove non-splitness of the moduli space of super Riemann surfaces (SRS). There, it is derived the following correspondence of obstruction classes: those of a given supermanifold with those of its submanifolds. By viewing deformations of an SRS as supermanifolds embedded in the corresponding (punctured) moduli space, the obstruction classes of the moduli space can be related to those of the deformation---the latter being significantly easier to describe. We note that Donagi and Witten were concerned with the splitting problem of the ambient supermanifold by reference to its submanifolds. In this article we consider, in a sense, a reverse picture. We are concerned instead with the splitting problem of submanifolds by reference to the ambient supermanifold, which we are at liberty to assume is split.  
\\\\
This article culminates in Theorem \ref{orohfuirgfuirhf98h49} which addresses the splitting question for embeddings over rational normal curves. In degree two this embedding is the superspace quadric, which is a classical example of a non-split supermanifold. It was originally described by Green in \cite{GREEN} and considered in more detail by Onishchik and Bunegina in \cite{ONIBUG}. Witten in \cite[p. 8]{WITTNS} gives a heuristic argument as to why the superspace quadric is non-split. Our deduction of non-splitness of this quadric in Theorem \ref{orohfuirgfuirhf98h49}\emph{(i)}, which is based ultimately on Lemma \ref{krknjviurvburnvien} and \eqref{rhf4gf78hf983jf389fh39hf3}, follows in spirit the argument given by Witten. 

\subsection*{Remark} In a subsequent paper \cite{BETTQUAD}, we argue that non-splitness of quadrics is a generic feature.

\subsection*{Article Summary and Main Results}
We begin with some preliminary theory and establish definitions relevant for our purposes in this article. Generally speaking, we look to describe obstruction classes of submanifolds $\Yfr$ of a given, complex supermanifold $\Xfr$ with a prescribed splitting type. Depending on the splitting type of $\Xfr$ relative to that of $\Yfr$, we show that the obstruction classes of $\Yfr$ can be reduced to computing global sections of certain sheaves. This is the content of Theorem \ref{h89h890309i390}. We show how these sheaves are related to certain, twisted conormal sheaves in Theorem \ref{rhf7hf98fh894f03j903j309}. Subsequently, we propose a correspondence between ideal sheaves and submanifolds of supermanifolds in \eqref{008h489fh94f94fj4}. This is clarified in Theorem \ref{lfvrnvnrnvrnvknvlke} for `even' embeddings. We conclude our study of embeddings with Theorem \ref{rf8hg984h9g8gj4j0}, relating generators for ideal sheaves with the obstruction classes to splitting. There are two classes of applications we provide in this article. In Section \ref{rfu4fhfoj4oifi4fmp}, we obtain some general characterisations of (even) embeddings. In Theorem \ref{ldfjnvnjfjdkkskskdkd} we partially address the splitting question for embeddings which motivated this article. This leads to the notion of split embeddings of models, introduced Definition \ref{finioruhf4jfio4jo4joji4}. We apply a classical result to deduce the existence of such embeddings in Example \ref{9008f9gf73f}. In Section \ref{ofprijiovorvorvoirip}, we consider subvarieties of projective superspace. Our main result is Theorem \ref{orohfuirgfuirhf98h49} where we argue that certain superspace embeddings over rational normal curves can non-split only in degree two and are otherwise split. This article concludes with remarks on potential directions for future work. In brief: the ideas in this article should be applicable in studying certain subvarieties of projective and weighted projective superspaces which appear in \cite{SETHI, AGAVAFA} as proposed candidates for mirrors of rigid, K\"ahler manifolds in Landau-Ginzberg sigma models. We address this proposal in a subsequent article \cite{BETTQUAD}.

\subsection*{Conventions}
We work over the field of complex numbers. Pairs $(X, T^*_{X, -})$ are referred to as `models'. They comprise a complex manifold $X$ (usually compact) and a holomorphic vector bundle $T^*_{X, -}$. We view $X$ as a locally ringed space with structure sheaf $\Oc_X$ and $T^*_{X, -}$ as a locally free sheaf of $\Oc_X$-modules. Morphisms are always holomorphic and so pulling back and pushing forward along them will define exact functors on the respective module categories. We refer to \cite{GRAUREM} for the general theory of complex manifolds and sheaves on them. We reference \cite{YMAN, QFAS} for foundational aspects of complex supermanifold theory. The morphisms of supercommutative algebras considered here are required to preserve the parity and so, in this sense, are always `even'.

\section{Preliminaries}

\subsection{Green's Automorphism Groups}
Fix a model $(X, T^*_{X, -})$. Green's sheaf of non-abelian groups $\Gc^{(2)}_{T^*_{X, -}}$ is defined the kernel of the surjective morphism of sheaves of groups $\mathcal Aut~\wedge^\bt T^*_{X, -} \ra \mathcal Aut_{\Oc_X}T^*_{X,-}$. More generally, set $\Jc_{T^*_{X, -}}^{< k} = \oplus_{j=1}^{k-1} \wedge^jT^*_{X, -}$. Green's `higher' sheaves of groups $\Gc^{(k)}_{T^*_{X, -}}$ are defined as the kernel of $\mathcal Aut\wedge^\bt T^*_{X, -}\ra \mathcal Aut~\Jc^{<k}_{T^*_{X, -}}$.

As a set: 
\begin{align}
\Gc^{(k)}_{T^*_{X, -}} 
=
\left\{
\al\in \mathcal Aut\wedge^\bt T_{X, -}
\mid \al(u) - u \in \Jc^k_{T_{X, -}^*}
\right\}.
\label{rhf784gf78h98fj30}
\end{align}
where $\Jc_{T_{X, -}^*} = \oplus_{j\geq 1}\wedge^jT_{X, -}^*$ and $\Jc^k_{T_{X, -}^*} = \oplus_{j\geq k}\wedge^jT_{X, -}^*$.

\begin{DEF}\label{rfh89hf98f0j30jf03jf903}
\emph{Let $(X, T^*_{X,-})$ be a model. For all $k\geq 2$, elements of the \v Cech cohomology set $\mbox{\v H}^1\big(X, \Gc_{T^*_{X, -}}^{(k)}\big)$ will be referred to as \emph{$(k-1)$-split supermanifolds modelled on $(X, T_{X,-}^*)$}. Furthermore:
\begin{enumerate}[$\bt$]
	\item $X$ is referred to as the \emph{reduced space};
	\item $T^*_{X, -}$ is referred to as the \emph{odd, conormal bundle};
	\item A $(k-1)$-split supermanifold $\Xfr$ is said to have of \emph{splitting type $(k-1)$}.
\end{enumerate}}
\end{DEF}

\begin{DEF}\label{dhfh3hf9h398f83jf03j3f33}
\emph{The basepoint in $\mbox{\v H}^1\big(X, \Gc_{T^*_{X, -}}^{(2)}\big)$ will be denoted $e_{(X, T_{X,-}^*)}$. It is referred to as the \emph{split model associated to $(X, T^*_{X, -})$}.\footnote{As a point of clarification, unlike the model $(X, T^*_{X, -})$, the split model is a supermanifold.}}
\end{DEF}

\subsection{Primary Obstructions}
From \eqref{rhf784gf78h98fj30} it is clear that $\Gc^{(k+1)}_{T^*_{X, -}}\subset \Gc^{(k)}_{T^*_{X, -}}$ for any $k\geq 2$. A fundamental result underpinning the obstruction theory for supermanifolds is the following by Green, proved in \cite{GREEN}.

\begin{LEM}\label{rfngf874f98h0f3j3f3f}
For any model $(X, T_{X,-}^*)$ and each $k\geq 2$,
\begin{enumerate}[(i)]
	\item $\Gc_{T^*_{X, -}}^{(k+1)}$ is a sheaf of normal subgroups of $\Gc^{(k)}_{T^*_{X, -}}$;
	\item the quotient $\Qcl_{T^*_{X,-}}^{(k)}:= \Gc^{(k)}_{T^*_{X, -}}/\Gc^{(k+1)}_{T^*_{X, -}}$ is a sheaf of abelian groups.
\end{enumerate}
\end{LEM}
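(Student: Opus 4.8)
The plan is to work throughout with the \emph{difference operator} $D_\al := \al - \id$ attached to each $\al \in \mathcal{A}ut\,\Ac$, where $\Ac := \wedge^\bt T^*_{X,-}$, and to control how $D_\al$ moves the $\Jc$-adic filtration $\Ac = \Jc^0 \supseteq \Jc^1 \supseteq \Jc^2 \supseteq \cdots$; I abbreviate $\Gc^{(k)} := \Gc^{(k)}_{T^*_{X,-}}$ and $\Jc^j := \Jc^j_{T^*_{X,-}} = \oplus_{i \geq j}\wedge^i T^*_{X,-}$. Two elementary observations carry the whole argument. \emph{(a)} Every $\al \in \mathcal{A}ut\,\Ac$ preserves the filtration: $\Jc$ is the nilradical of $\Ac$ (as $\Ac/\Jc = \Oc_X$ is reduced while $\Jc$ is a nil ideal), so $\al(\Jc) = \Jc$ and hence $\al(\Jc^j) = \al(\Jc)^j = \Jc^j$ for all $j \geq 0$; the same holds for $\al^{-1}$. \emph{(b)} By the description \eqref{rhf784gf78h98fj30}, $\al \in \Gc^{(k)}$ means $D_\al(\Ac) \subseteq \Jc^k$; expanding $\al(\om_1 \cdots \om_j) = \prod_i(\om_i + D_\al\om_i)$ for $\om_i \in \wedge^1 T^*_{X,-}$ and noting that every summand beyond the leading term $\om_1 \cdots \om_j$ carries at least one factor from $\Jc^k$ and all its other factors from $\Jc$, one upgrades this to the filtration shift $D_\al(\Jc^j) \subseteq \Jc^{j+k-1}$ for all $j \geq 1$. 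In particular $D_\al(\Jc^k) \subseteq \Jc^{2k-1} \subseteq \Jc^{k+1}$, the final inclusion being exactly where the hypothesis $k \geq 2$ enters.

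For \emph{(i)}, take $\al \in \Gc^{(k)}$ and $\be \in \Gc^{(k+1)}$. Additivity of $D$ gives the operator identity $\al\be\al^{-1} - \id = \al \circ (\be - \id) \circ \al^{-1} = \al \circ D_\be \circ \al^{-1}$. Evaluating on $\Ac$ and applying \emph{(a)} twice: $\al^{-1}(\Ac) = \Ac$, then $D_\be(\Ac) \subseteq \Jc^{k+1}$ since $\be \in \Gc^{(k+1)}$, then $\al(\Jc^{k+1}) = \Jc^{k+1}$. Hence $(\al\be\al^{-1} - \id)(\Ac) \subseteq \Jc^{k+1}$, so $\al\be\al^{-1} \in \Gc^{(k+1)}$; as this holds over every open set, $\Gc^{(k+1)}$ is a sheaf of normal subgroups of $\Gc^{(k)}$. (Observation \emph{(b)} is not needed here.)

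For \emph{(ii)}, it is enough to check that the commutator $\al\be\al^{-1}\be^{-1}$ of any $\al, \be \in \Gc^{(k)}$ lies in $\Gc^{(k+1)}$. Composing $\al = \id + D_\al$ with $\be = \id + D_\be$ and using additivity of $D$ gives $\al\be = \id + D_\al + D_\be + D_\al \circ D_\be$ and, symmetrically, $\be\al = \id + D_\al + D_\be + D_\be \circ D_\al$, so $\al\be - \be\al = D_\al \circ D_\be - D_\be \circ D_\al$ as operators. By the filtration shift in \emph{(b)}, each of $D_\al \circ D_\be$ and $D_\be \circ D_\al$ maps $\Ac$ into $\Jc^{2k-1} \subseteq \Jc^{k+1}$, so $(\al\be - \be\al)(\Ac) \subseteq \Jc^{k+1}$. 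Using \emph{(a)} once more, $(\be\al)^{-1}(\Ac) = \Ac$, whence $\big(\al\be(\be\al)^{-1} - \id\big)(\Ac) = \big((\al\be - \be\al) \circ (\be\al)^{-1}\big)(\Ac) = (\al\be - \be\al)(\Ac) \subseteq \Jc^{k+1}$; since $\al\be(\be\al)^{-1} = \al\be\al^{-1}\be^{-1}$, this is the claim. Consequently $\Qcl^{(k)}_{T^*_{X,-}} = \Gc^{(k)}/\Gc^{(k+1)}$ is abelian on every open set, hence is a sheaf of abelian groups. (The leading part of $D_\al$ moreover realises $\Qcl^{(k)}_{T^*_{X,-}}$ as a sheaf of $\Oc_X$-modules, in agreement with the sharper descriptions developed later in the article.)

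The only step demanding genuine care is the upgrade in \emph{(b)}, from $D_\al(\Ac) \subseteq \Jc^k$ to $D_\al(\Jc^j) \subseteq \Jc^{j+k-1}$: it is precisely this gain of one extra filtration step --- then propagated through the composites $D_\al \circ D_\be$ --- that forces commutators down from $\Gc^{(k)}$ into $\Gc^{(k+1)}$, and where the bound $k \geq 2$ is used. Everything else is routine bookkeeping with the identities $\al(\Jc^j) = \Jc^j$ and $D_{\al\be} = D_\al + D_\be + D_\al \circ D_\be$.
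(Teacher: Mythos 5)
Your argument is correct, and it follows a genuinely different route from the paper. The paper proves \emph{(ii)} by identifying the Lie algebra of $\Gc^{(k)}_{T^*_{X,-}}$ with degree-$k$ derivations, exponentiating, and invoking the Campbell--Baker--Hausdorff formula to show that the leading term in the bracket expansion of a commutator is a derivation of high degree. Your argument avoids the exponential/BCH machinery entirely: you work directly with the difference operator $D_\al = \al - \id$, using the elementary facts that $\al$ preserves the $\Jc$-adic filtration (because $\Jc$ is the nilradical) and the identity $D_{\al\be} = D_\al + D_\be + D_\al \circ D_\be$, so that $\al\be - \be\al = D_\al \circ D_\be - D_\be \circ D_\al$ falls into $\Jc^{2k-1} \subseteq \Jc^{k+1}$ by your filtration-shift estimate in \emph{(b)}. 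This is more elementary and self-contained. It is also somewhat safer: the paper's displayed BCH computation \eqref{9403983hf98hf83} for $e^{x_1}e^{x_2}e^{-x_1}e^{-x_2}$ drops the actual leading term $[x_1,x_2]$ (the genuine leading term of the BCH series of a group commutator), and records only a higher-order bracket; the conclusion survives because $[x_1,x_2]$ has degree $2k \geq k+1$, but your direct estimate sidesteps this delicate bookkeeping. You also supply a clean proof of part \emph{(i)}, which the paper cites rather than proves.

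One expository gap worth closing: in \emph{(b)} you verify $D_\al(\Jc^j) \subseteq \Jc^{j+k-1}$ only on products $\om_1\cdots\om_j$ of degree-one elements. A general local section of $\wedge^m T^*_{X,-}$ has the form $\sum_I f_I\,\om_I$ with $f_I \in \Oc_X$, and $\al$ is \emph{not} $\Oc_X$-linear for $\al \in \Gc^{(k)}_{T^*_{X,-}}$ (only $\al(f) \equiv f \bmod \Jc^k$). So you should also control the terms $D_\al(f_I)\,\om_I \in \Jc^k \cdot \Jc^m \subseteq \Jc^{m+k} \subseteq \Jc^{j+k-1}$ and the cross term $D_\al(f_I)\,D_\al(\om_I) \in \Jc^{k}\cdot\Jc^{m+k-1}$, both of which land comfortably inside $\Jc^{j+k-1}$; and then pass to arbitrary elements of $\Jc^j$ by additivity of $D_\al$. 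With this small addition, observation \emph{(b)} and hence the whole argument is watertight.
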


\noindent
We will take \emph{(i)} as given and prove \emph{(ii)} as it will be referenced in a subsequent section.
\\\\ 
\emph{Proof of Lemma $\ref{rfngf874f98h0f3j3f3f}$(ii)}. 
The following general fact from group theory will be useful: for $N\leq G$ a normal subgroup, the quotient $G/N$ is abelian if and only if $N$ contains the commutator subgroup $[G, G]$. We now claim $\big[\Gc^{(k)}_{T^*_{X, -}}, \Gc^{(k)}_{T^*_{X, -}}\big]\subset \Gc^{(4k)}_{T^*_{X, -}}$. This is easiest to see at the Lie algebra level. Firstly, the Lie algebra $\mathfrak g^{(k)}_{T^*_{X, -}}$ of $\Gc^{(k)}_{T^*_{X, -}}$ can be identified with the sheaf of derivations $\wedge^\bt T^*_{X,-} \ra \wedge^\bt T^*_{X, -}$ of degree $k$, i.e., those derivations sending $\wedge^lT^*_{X,-} \ra \wedge^{l+k}T^*_{X,-}$ for all $l$. This is a nilpotent Lie algebra so therefore the formal exponential map $\mathfrak g^{(k)}_{T^*_{X, -}} \ra \Gc_{T^*_{X, -}}^{(k)}$ will be a bijection of sheaves of sets. Hence for any $\al_1, \al_2\in \Gc^{(2)}_{T^*_{X, -}}$ we can write $\al_1 = e^{x_1}$ and $\al_2 = e^{x_2}$ for $x_1, x_2\in \mathfrak g^{(k)}_{T^*_{X, -}}$. Note $\al_i^{-1} = e^{-x_i}$. From the Campbell-Baker-Hausdorff formula:
\[
\al_1\circ\al_2 = e^{x_1}e^{x_2} = e^{x_1+x_2 + \frac{1}{2}[x_1, x_2] + \ldots}.
\]
Thus the commutator is:
\begin{align}
[\al_1, \al_2]
&=
\al_1\al_2\al_1^{-1}\al_2^{-1}
\notag
\\
&=
e^{x_1}e^{x_2}e^{-x_1}e^{-x_2}
\notag
\\
&= e^{\frac{1}{8} [x_1, [x_2, [x_1, x_2]]]+\ldots}\label{9403983hf98hf83}
\end{align}
It remains to note that the term $[x_1, [x_2, [x_1, x_2]]]$ in \eqref{9403983hf98hf83} is a derivation of $\wedge^\bt T^*_{X, -}$ of degree-$(4k)$. Hence 
$\big[\Gc^{(k)}_{T^*_{X, -}}, \Gc^{(k)}_{T^*_{X, -}}\big]
\subset \Gc^{(4k)}_{T^*_{X, -}}$. 
Since $\Gc^{(k+1)}_{T^*_{X,-}}$ contains $\Gc^{(4k)}_{T^*_{X, -}}$ it will contain the commutator subgroup $[\Gc^{(k)}_{T^*_{X, -}}, \Gc^{(k)}_{T^*_{X, -}}]$. Assuming \emph{(i)} we know that $\Gc^{(k+1)}_{T^*_{X,-}}\subset \Gc^{(k)}_{T^*_{X,-}}$ is normal. Hence the quotient is abelian.
\qed

\begin{DEF}
\emph{For any model $(X, T_{X,-}^*)$, the quotient sheaf $\Qcl^{(k)}_{T^*_{X, -}}$ will be referred to as the \emph{$k$-th obstruction sheaf}. The sheaf cohomology group $H^1\big(X, \Qcl_{T^*_{X, -}}^{(k)}\big)$ will be referred to as the \emph{$k$-th obstruction space}.}
\end{DEF}

\noindent
The short exact sequence of sheaves of groups,
\[
\xymatrix{
\{1\}\ar[r] & \Gc^{(k+1)}_{T^*_{X, -}} \ar[r] & \Gc^{(k)}_{T^*_{X, -}} \ar[r]  & \Qcl_{T^*_{X, -}}^{(k)}\ar[r] & \{1\}
}
\]
induces a long exact sequence (of pointed sets) on \v Cech cohomology containing the piece:
\begin{align}
\cdots\lra
\mbox{\v H}^1\big(X, \Gc^{(k+1)}_{T^*_{X, -}}\big)
\lra 
\mbox{\v H}^1\big(X, \Gc^{(k)}_{T^*_{X, -}}\big)
\stackrel{\om}{\lra}
H^1\big(X, \Qcl_{T^*_{X, -}}^{(k)}\big).
\label{rf74h9f8h80fj039j}
\end{align}
Hence to any $(k-1)$-split supermanifold $\Xfr$ we have a cohomology class $\om(\Xfr)$. 

\begin{DEF}
\emph{The class $\om(\Xfr)$ of a $(k-1)$-split supermanifold $\Xfr$ modelled on $(X, T_{X,-}^*)$ will be referred to as the \emph{primary obstruction} of $\Xfr$.}
\end{DEF}

\noindent
To justify the terminology in the above definition we have the following, which is essentially a restatement of the fact that \eqref{rf74h9f8h80fj039j} is exact.

\begin{LEM}\label{rhf78rhf74hf98fj3f039}
A $(k-1)$-split supermanifold is $k$-split if and only if its primary obstruction vanishes.\qed
\end{LEM}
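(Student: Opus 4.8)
The claim is, in essence, just a careful reading of the exactness of \eqref{rf74h9f8h80fj039j} at its middle term, so the plan is to make the relevant identifications explicit. First I would note that, since $\Gc^{(k+1)}_{T^*_{X,-}}\subseteq \Gc^{(k)}_{T^*_{X,-}}$, the inclusion of sheaves of groups induces a map of pointed sets $\iota_*\colon \mbox{\v H}^1\big(X,\Gc^{(k+1)}_{T^*_{X,-}}\big)\to \mbox{\v H}^1\big(X,\Gc^{(k)}_{T^*_{X,-}}\big)$; unwinding Definition \ref{rfh89hf98f0j30jf03jf903}, a $(k-1)$-split supermanifold $\Xfr$, regarded as a class in $\mbox{\v H}^1\big(X,\Gc^{(k)}_{T^*_{X,-}}\big)$, is \emph{$k$-split} precisely when it lies in the image of $\iota_*$.

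Next I would recall the meaning of exactness for the cohomology sequence of pointed sets attached to the short exact sequence $\{1\}\to \Gc^{(k+1)}_{T^*_{X,-}}\to \Gc^{(k)}_{T^*_{X,-}}\to \Qcl^{(k)}_{T^*_{X,-}}\to\{1\}$: exactness at $\mbox{\v H}^1\big(X,\Gc^{(k)}_{T^*_{X,-}}\big)$ says exactly that $\img\iota_*=\om^{-1}(e)$, where $e$ is the identity of the group $H^1\big(X,\Qcl^{(k)}_{T^*_{X,-}}\big)$ — and this cohomology is a genuine abelian group because $\Qcl^{(k)}_{T^*_{X,-}}$ is a sheaf of abelian groups by Lemma \ref{rfngf874f98h0f3j3f3f}(ii). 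Chaining these equivalences gives
\[
\Xfr\ \text{is $k$-split}\iff \Xfr\in\img\iota_*\iff \om(\Xfr)=e\iff \text{the primary obstruction of }\Xfr\text{ vanishes,}
\]
which is the assertion.

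The only step requiring any attention is the first identification — that ``$k$-split'' coincides with ``lying in $\img\iota_*$'' — which amounts to checking that the functoriality of $\mbox{\v H}^1(X,-)$ on sheaves of groups carries the inclusion $\Gc^{(k+1)}_{T^*_{X,-}}\hookrightarrow \Gc^{(k)}_{T^*_{X,-}}$ to the natural map realising a $k$-split supermanifold as a $(k-1)$-split one; this is immediate from the \v Cech cocycle description, since a cocycle valued in $\Gc^{(k+1)}_{T^*_{X,-}}$ is in particular one valued in $\Gc^{(k)}_{T^*_{X,-}}$, and cohomologous cocycles remain cohomologous. Beyond that, the lemma is purely the formal exactness of \eqref{rf74h9f8h80fj039j}, which was already used in defining the primary obstruction, so there is no substantive obstacle.
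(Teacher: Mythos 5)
Your proposal is correct and takes essentially the same approach as the paper: the paper explicitly frames the lemma as ``a restatement of the fact that \eqref{rf74h9f8h80fj039j} is exact'' and supplies no further argument, while you spell out the same point by identifying ``$k$-split'' with membership in $\img\iota_*$ and invoking exactness of the pointed-set sequence at the middle term.
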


\noindent
In the interests of classification we give the following definition. It is an adaption of non-splitness as one might traditionally find in the literature.

\begin{DEF}\label{rhf97hf983jf0jf09j30}
\emph{A supermanifold is said to be \emph{non $k$-split} if it is $(k-1)$-split with non vanishing primary obstruction.}
\end{DEF}

\subsection{Classifying Supermanifolds}
For completeness we give a brief summary here of complex supermanifolds as one might traditionally find in the literature, such as in \cite{YMAN, QFAS}. The view of supermanifolds promoted in this article, in Definition \ref{rfh89hf98f0j30jf03jf903}, is as certain classes in a \v Cech cohomology set. More classically, with a fixed model $(X, T_{X, -}^*)$, a supermanifold modelled on  $(X, T_{X, -}^*)$ is defined as locally ringed space $\Xfr = (X, \Oc_\Xfr)$ with $\Oc_\Xfr$ a sheaf of supercommutative algebras, locally isomorphic to $\wedge^\bt T^*_{X, -}$. This means there exists a cover $(U_i)$ of $X$ such that $ \Oc_\Xfr(U_i)\cong \wedge^\bt T^*_{X, -}(U_i)$. The sheaf $\Oc_\Xfr$ is the structure sheaf of $\Xfr$. One says $\Xfr$ is \emph{split} if $\Oc_\Xfr$ is globally isomorphic to $\wedge^\bt T^*_{X, -}$. Since $\Oc_\Xfr$ is supercommutative, it is globally $\Zbb_2$-graded and we write $\Oc_\Xfr = \Oc_\Xfr^+\oplus \Oc_\Xfr^-$. The odd part $\Oc_\Xfr^-$ is an $\Oc_\Xfr^+$-module. It is a submodule of $\Oc_\Xfr$ and the ideal in $\Oc_\Xfr$ it generates is denoted $\Jc_\Xfr$. It satisfies:
\begin{align}
\Oc_\Xfr/ \Jc_\Xfr
= \Oc^+_\Xfr/ \Jc_\Xfr
= \Oc_X
&&
\mbox{and}
&&
\Oc^-_\Xfr/\Jc^2_\Xfr 
=
\Oc_\Xfr^-/ \Jc_\Xfr^2
=
 T^*_{X, -}. 
 \label{rhf4hf984j09fjfj30}
\end{align}
In particular $\Jc_\Xfr/\Jc_\Xfr^2$ is locally free. Note $\Oc_\Xfr/\Jc_\Xfr^2 = \Oc_X\oplus T^*_{X, -}$.\footnote{If $\Xfr$ is split then $\Jc_\Xfr\cong \oplus_{j>0}\wedge^jT^*_{X, -}$.} There is a useful criterion for splitting which we can obtain directly from the descriptions in \eqref{rhf4hf984j09fjfj30}. Consider rewriting these descriptions as exact sequences:
\begin{align}
0 \ra \Jc_\Xfr\ra \Oc_\Xfr \ra \Oc_X\ra0
&&\mbox{and}&&
0\ra \Jc^2_{T^*_{X, -}} \ra \Jc_{T^*_{X, -}} \ra T^*_{X, -}\ra0.
\label{fniooiuheoifhoifoi3joj3o}
\end{align}
We have:

\begin{LEM}\label{rbf8gf87h4f98h389fj0j30}
If the sequences in \eqref{fniooiuheoifhoifoi3joj3o} are both split exact, then $\Xfr$ is split. 
\end{LEM}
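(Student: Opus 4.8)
The plan is to manufacture, from the two prescribed splittings, a global isomorphism of sheaves of supercommutative algebras $\Phi\colon\wedge^\bt T^*_{X,-}\ra\Oc_\Xfr$, and then to verify it is an isomorphism by passing to the associated graded of the $\Jc_\Xfr$-adic filtration $\Oc_\Xfr\supseteq\Jc_\Xfr\supseteq\Jc_\Xfr^2\supseteq\cdots$. That filtration is finite, which is immediate from local triviality of $\Oc_\Xfr$, and this finiteness is what ultimately powers the argument.

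First I would split the left-hand sequence of \eqref{fniooiuheoifhoifoi3joj3o} in the category of sheaves of supercommutative algebras, obtaining a (parity-preserving) ring-theoretic section $\sigma\colon\Oc_X\ra\Oc_\Xfr$ of the quotient map. Through $\sigma$ the sheaf $\Oc_\Xfr$, and with it the ideals $\Jc_\Xfr\supseteq\Jc_\Xfr^2$, acquire the structure of sheaves of $\Oc_X$-modules; since $\Jc_\Xfr/\Jc_\Xfr^2$ is a square-zero ideal, the $\Oc_X$-module so induced on it is exactly the odd conormal bundle $T^*_{X,-}$ of \eqref{rhf4hf984j09fjfj30}. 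I would then split the right-hand sequence of \eqref{fniooiuheoifhoifoi3joj3o} as $\Oc_X$-modules for this structure, obtaining an $\Oc_X$-linear section $\tau\colon T^*_{X,-}\ra\Jc_\Xfr$; post-composing $\tau$ with the projection $\Oc_\Xfr\ra\Oc_\Xfr^-$ onto the odd part, which is $\Oc_X$-linear because $\sigma$ is parity-preserving, I may assume $\tau$ takes values in $\Oc_\Xfr^-$ --- here one uses $\Oc_\Xfr^+\cap\Jc_\Xfr\subseteq\Jc_\Xfr^2$ to see the correction does not spoil the section property. Since $\Oc_\Xfr$ is supercommutative over $\Cbb$, sections of $\Oc_\Xfr^-$ square to zero, so the universal property of the exterior algebra over $\Oc_X$ extends $\tau$ uniquely to a morphism of sheaves of supercommutative $\Oc_X$-algebras $\Phi\colon\wedge^\bt T^*_{X,-}\ra\Oc_\Xfr$ which restricts to $\sigma$ in degree zero and to $\tau$ in degree one.

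It then remains to see $\Phi$ is an isomorphism. As $\wedge^\bt T^*_{X,-}$ is generated over $\Oc_X$ in degrees zero and one and $\tau(T^*_{X,-})\subseteq\Jc_\Xfr$, the map $\Phi$ respects filtrations, $\Phi\big(\wedge^{\geq k}T^*_{X,-}\big)\subseteq\Jc_\Xfr^k$; and since $\sigma$ and $\tau$ are sections, the induced map on $k$-th graded pieces is precisely the canonical multiplication map $\wedge^k\big(\Jc_\Xfr/\Jc_\Xfr^2\big)\ra\Jc_\Xfr^k/\Jc_\Xfr^{k+1}$. This is surjective by the definition of $\Jc_\Xfr^k$ and, being a surjection between locally free sheaves of equal rank (again by local triviality), it is an isomorphism. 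Because the filtration is finite, an isomorphism on associated graded sheaves implies, via a descending induction and the five lemma, that $\Phi$ itself is an isomorphism. Hence $\Oc_\Xfr\cong\wedge^\bt T^*_{X,-}$ globally and $\Xfr$ is split.

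I expect the only place that needs real care is the bookkeeping hidden in the phrase ``split exact''. The left-hand sequence must split as a sequence of sheaves of \emph{algebras}: a module-theoretic section is not even available, since $\Oc_\Xfr$ has no canonical $\Oc_X$-module structure, and a merely $\Cbb$-linear section would be too weak to run the universal-property step. The right-hand sequence, by contrast, need only split as $\Oc_X$-modules, but precisely for the structure induced by the algebra section $\sigma$ chosen first. Once the hypothesis is read in this way, the remaining verifications --- that $\tau$ may be taken to land in $\Oc_\Xfr^-$, and that each canonical map $\wedge^k(\Jc_\Xfr/\Jc_\Xfr^2)\ra\Jc_\Xfr^k/\Jc_\Xfr^{k+1}$ is an isomorphism --- are routine, and the construction goes through without further obstacle.
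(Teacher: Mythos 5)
Your construction of $\Phi$ via the universal property of the (super) exterior algebra is the same as the paper's construction of $\psi$; the two proofs diverge only in the verification that this map is an isomorphism. The paper compares $\psi|_{U_i}$ with the local trivialisations $\vp_i\colon\wedge^\bt T^*_{X,-}(U_i)\stackrel{\sim}{\to}\Oc_\Xfr(U_i)$: uniqueness in the universal property produces an endomorphism $\phi_i$ with $\vp_i = \psi|_{U_i}\circ\phi_i$, both factors are then injective, and an injective endomorphism of a finite-rank algebra over a Noetherian local ring is forced to be an isomorphism stalkwise. You instead pass to the associated graded of the $\Jc_\Xfr$-adic filtration, observe that $\mathrm{gr}\,\Phi$ in degree $k$ is the canonical surjection $\wedge^k(\Jc_\Xfr/\Jc_\Xfr^2)\to\Jc_\Xfr^k/\Jc_\Xfr^{k+1}$ of locally free sheaves of equal rank (hence an isomorphism, by the determinant trick in each stalk), and then climb back up the finite filtration. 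Both arguments ultimately lean on the same finiteness/Noetherian input, but yours is more intrinsic — it never invokes the local trivialisations beyond knowing the ranks of the graded pieces — while the paper's is more hands-on. You also make explicit two hygiene points the paper leaves tacit: that the splitting of \eqref{fiorjvropjvprpvrp} must be taken in the category of sheaves of superalgebras (otherwise $\Oc_\Xfr$ never acquires an $\Oc_X$-module structure, as the paper's phrase ``will admit the structure of an $\Oc_X$-algebra'' presupposes), and that $\tau$ must be corrected to land in $\Oc_\Xfr^-$ so that its image genuinely squares to zero, which is what the universal property requires. Both corrections are sound; the adjustment of $\tau$ works for exactly the reason you give, namely $\Oc_\Xfr^+\cap\Jc_\Xfr\subseteq\Jc_\Xfr^2$.
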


\begin{proof}
This is a well-known characterisation of splitting for supermanifolds. For completeness we provide a proof in Appendix \ref{rf67f47gf87hf983hf03j0}. 
\end{proof}

\noindent
In the paper by Green in \cite{GREEN} it is shown that $\mbox{\v H}^1\big(X, \Gc_{T^*_{X, -}}^{(2)}\big)$ classifies complex supermanifolds (as the locally ringed spaces described above) up to an appropriate equivalence. Up to isomorphism, supermanifolds are classified by their image in $\mbox{\v H}^1\big(X, \mathcal Aut\wedge^\bt T^*_{X, -}\big)$ under the natural map induced on cohomology by the inclusion $\Gc_{T^*_{X, -}}^{(2)}\subset  \mathcal Aut\wedge^\bt T^*_{X, -}$. A supermanifold is split if and only if its image in $\mbox{\v H}^1\big(X, \mathcal Aut\wedge^\bt T^*_{X, -}\big)$ coincides with the basepoint. Otherwise, it is non-split.
\\\\
It is generally quite difficult to find classes which obstruct the existence of a splitting. As such we consider instead the notion of `$(k-1)$-splitting' as in Definition \ref{rfh89hf98f0j30jf03jf903}. In the terminology of Definition \ref{rhf97hf983jf0jf09j30}, we have: any non $2$-split supermanifold is in fact non-split. This is a classical result and a proof is given in the appendix in \cite{BETTHIGHOBS}. The analogous statement for non $k$-split supermanifolds for $k> 2$ does not necessarily hold. We refer again to \cite{BETTHIGHOBS} for further discussions on this point.

\section{Embeddings}

\subsection{Definitions}
Consider models $(Y, T^*_{Y,-})$ and $(X, T_{X,-}^*)$. Suppose we have an holomorphic embedding of spaces $i: Y\subset X$ and a surjection of sheaves $f^\sharp: T^*_{X, -} \ra  i_*T_{Y,-}^*$. If these maps exist we will say there exists a holomorphic embedding of models $f : (Y, T_{Y,-}^*) \subset (X, T_{X,-}^*)$, where $f = (i, f^\sharp)$. Since the taking the exterior algebra is right-exact, it follows that $\wedge^\bt T^*_{X, -} \ra \wedge^\bt f_* T_{Y, -}^*$ is surjective. We denote the kernel by $\Ic_{T^*_{Y, -};T^*_{X, -}}$. Now, not every automorphism of $\wedge^\bt T^*_{X, -}$ will induce an automorphism of $\wedge^\bt T^*_{Y, -}$. Only those automorphisms preserving $\Ic_{T^*_{Y, -};T^*_{X, -}}$. Let $\mathcal Aut_{T^*_{Y,-}; T^*_{X, -}}$ denote the subgroup of such automorphisms and set
\[
\Gc^{(k)}_{T^*_{Y, -};T_{X, -}^*}
:=
\Gc^{(k)}_{T_{X, -}^*} \cap \mathcal Aut_{T^*_{Y,-}; T^*_{X, -}}.
\]
We have natural homomorphisms of sheaves of groups:
\begin{align}
\xymatrix{
\ar[d]_r \Gc^{(k)}_{T^*_{Y, -};T_{X, -}^*} \ar[rr]^u & & \Gc^{(k)}_{T_{X, -}^*}
\\
\Gc^{(k)}_{T^*_{Y, -}}& & 
}
\label{gf874gf7h98f3j0}
\end{align}
where $u$ is the inclusion and $r$ is the restriction of a group element to the submanifold $Y\subset X$. The maps in \eqref{gf874gf7h98f3j0} induce a similar picture on \v Cech cohomology:
\begin{align}
\xymatrix{
\ar[d]_{r_*} 
\mbox{\v H}^1\big(X, \Gc^{(k)}_{T^*_{Y, -};T_{X, -}^*}\big) \ar[rr]^{u_*} & & \mbox{\v H}^1\big(X,\Gc^{(k)}_{T_{X, -}^*}\big)
\\
\mbox{\v H}^1\big(Y, \Gc^{(k)}_{T^*_{Y, -}}\big)& & 
}
\label{rhfi4gf74hf8939fj3}
\end{align}
Hence to any element in $\mbox{\v H}^1\big(X, \Gc^{(k)}_{T^*_{Y, -};T_{X, -}^*}\big)$ we can assign $(k-1)$-split supermanifolds modelled on $(Y, T_{Y,-}^*)$ and $(X, T_{X,-}^*)$ respectively.

\begin{DEF}\label{fj89hf98f93jf9jf39}
\emph{Let $\Yfr$ and $\Xfr$ be $(k-1)$-split supermanifolds modelled on $(Y, T_{Y,-}^*)$ and $(X, T_{X,-}^*)$ respectively. Fix a holomorphic embedding $f: (Y, T_{Y,-}^*)\subset(X, T_{X,-}^*)$ of models. We say there exists a \emph{holomorphic embedding} of $\Yfr$ in $\Xfr$ \emph{over} $f$, or simply an \emph{embedding} (with $f$ understood) if and only if there exists some $\mathcal Z\in \mbox{\v H}^1\big(X, \Gc^{(k)}_{T^*_{Y, -};T_{X, -}^*}\big)$ such that 
\begin{align*}
r_*(\Zc) = \Yfr  &&\mbox{and} && u_*(\Zc) = \Xfr.
\end{align*}
We denote an embedding by $\big(\Zc: \Yfr\subset \Xfr\big)$.}
\end{DEF}

\begin{DEF}\label{rufuirbiuehfuheoifeojfoe}
\emph{Let $f: (Y, T^*_{Y, -})\subset (X, T^*_{X, -})$ be an embedding of models. Elements of $\mbox{\v H}^1\big(X, \Gc^{(k)}_{T^*_{Y, -};T_{X, -}^*}\big)$ are referred to as \emph{$(k-1)$-split embeddings over $f$}, or simply \emph{$(k-1)$-split embeddings} with $f$ understood. Following Definition \ref{rfh89hf98f0j30jf03jf903}, we will refer to elements of $\mbox{\v H}^1\big(X, \Gc^{(k)}_{T^*_{Y, -};T_{X, -}^*}\big)$ as having \emph{splitting type} $(k-1$).} 
\end{DEF}

\noindent
From the definition of an embedding it is clear that if $\Yfr$ and $\Xfr$ are supermanifolds and $\big(\Zc: \Yfr\subset \Xfr\big)$ is a $(k-1)$-split embedding, then $\Yfr$ and $\Xfr$ must both be $(k-1)$-split.

\begin{REM}\label{rfg784gf87h9f39j30}
\emph{Note that the diagram in \eqref{rhfi4gf74hf8939fj3} was constructed only from the data of an embedding $f: (Y,T_{Y,-}^*)\subset (X,T_{X,-}^*)$. Hence if there exists such an embedding $f$, there will exist an embedding of split, $(k-1)$-split (i.e., $k$-split) supermanifolds for all $k$. This is simply because $u_*$ and $r_*$ in \eqref{rhfi4gf74hf8939fj3} are maps of pointed sets and so map basepoints to basepoints. Hence, using the notation in Definition \ref{dhfh3hf9h398f83jf03j3f33}, we see that an embedding of models $(Y,T_{Y,-}^*)\subset (X,T_{X,-}^*)$ gives an embedding of respective split models $e_{(Y, T_{Y,-}^*)}\subset e_{(X, T_{X,-}^*)}$.}
\end{REM}

\subsection{Splitting Types}
For any $k^\p\geq k$ there exists a natural map $\Gc^{(k^\p)}_{T_{Y, -}^*;T_{X, -}^*} \ra\Gc^{(k)}_{T_{Y, -}^*;T_{X, -}^*}$ induced from the inclusion $\Gc^{(k^\p)}_{T^*_{X, -}} \subset \Gc^{(k)}_{T^*_{X, -}}$. This leads to the following commutative diagram:
\begin{align}
\xymatrix{
\ar[d] \ar[dr] \Gc^{(k^\p)}_{T_{Y, -}^*;T_{X, -}^*} \ar[r] & \Gc^{(k^\p)}_{T_{X, -}^*}\ar[dr]
\\
\ar[dr] \Gc^{(k^\p)}_{T_{Y, -}^*} &\Gc^{(k)}_{T_{Y, -}^*;T_{X, -}^*}\ar[d] \ar[r] & \Gc^{(k)}_{T_{X, -}^*}
\\
& \Gc^{(k)}_{T_{Y, -}^*}
}\label{rhf784f794hf8jf03j}
\end{align}
And hence on cohomology:
\begin{align*}
\xymatrix{
\ar[d]\mbox{\v H}^1\big(X, \Gc^{(k^\p)}_{T^*_{Y, -};T_{X, -}^*}\big) \ar[dr]\ar[r]  &\mbox{\v H}^1\big(X,\Gc^{(k^\p)}_{T_{X, -}^*}\big)\ar[dr]
\\
\mbox{\v H}^1\big(Y, \Gc^{(k^\p)}_{T^*_{Y, -}}\big)\ar[dr] 
&\ar[d]\mbox{\v H}^1\big(X, \Gc^{(k)}_{T^*_{Y, -};T_{X, -}^*}\big) \ar[r]  & \mbox{\v H}^1\big(X,\Gc^{(k)}_{T_{X, -}^*}\big)
\\
&\mbox{\v H}^1\big(Y, \Gc^{(k)}_{T^*_{Y, -}}\big)& 
}
\end{align*}
The above diagram shows that it is possible for there to exist an embedding $\big(\Zc: \Yfr\subset \Xfr\big)$ with $\Yfr$ and $\Xfr$ having different splitting types. This leads to the  following definition.

\begin{DEF}
\emph{The \emph{total splitting type} of an embedding $\big(\Zc:\Yfr\subset \Xfr\big)$ is the triple of integers $(k; k^\p, k^{\p\p})$, each greater than $1$, and where:
\begin{enumerate}[(i)]
	\item $(k - 1)$ is the splitting type of $\Zc$;
	\item $(k^\p-1)$ is the splitting type of $\Yfr$ and;  
	\item $(k^{\p\p}-1)$ is the splitting type of $\Xfr$.
\end{enumerate}}
\end{DEF}

\begin{REM}
\emph{In this article we will be interested in embeddings of total splitting type $(k; k, k+1)$. Such embeddings subsume, for instance, submanifolds of split supermanifolds, which is the setting for our intended applications.}
\end{REM}

\section{Obstructions}

\subsection{Normality}
Central to the classical obstruction theory for supermanifolds is Green's normality result in Lemma \ref{rfngf874f98h0f3j3f3f}. We will prove an analogous result for the sheaves $\Gc^{(k)}_{T^*_{Y, -}; T^*_{X, -}}$.

\begin{LEM}\label{hf94f9h498fj03jf3}
For each $k\geq 2$ there exists an embedding $\Gc^{(k+1)}_{T^*_{Y, -}; T^*_{X, -}} \subset \Gc^{(k)}_{T^*_{Y, -}; T^*_{X, -}}$ realising $\Gc^{(k+1)}_{T^*_{Y, -}; T^*_{X, -}}$ as a sheaf of normal subgroups of $\Gc^{(k)}_{T^*_{Y, -}; T^*_{X, -}}$. 
\end{LEM}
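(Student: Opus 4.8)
The plan is to deduce this from Green's normality result (Lemma \ref{rfngf874f98h0f3j3f3f}(i)) together with the fact that $\Gc^{(k+1)}_{T^*_{Y,-};T^*_{X,-}}$ and $\Gc^{(k)}_{T^*_{Y,-};T^*_{X,-}}$ are defined as intersections inside $\mathcal Aut\wedge^\bt T^*_{X,-}$ with the \emph{same} subgroup $\mathcal Aut_{T^*_{Y,-};T^*_{X,-}}$. First I would record the inclusion: since $\Gc^{(k+1)}_{T^*_{X,-}}\subset\Gc^{(k)}_{T^*_{X,-}}$ by \eqref{rhf784gf78h98fj30}, intersecting both sides with $\mathcal Aut_{T^*_{Y,-};T^*_{X,-}}$ gives $\Gc^{(k+1)}_{T^*_{Y,-};T^*_{X,-}}\subset\Gc^{(k)}_{T^*_{Y,-};T^*_{X,-}}$, which is the required embedding. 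This is a stalkwise statement and presents no difficulty.

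For normality, fix an open set $U$ and take $\al\in\Gc^{(k)}_{T^*_{Y,-};T^*_{X,-}}(U)$ and $\be\in\Gc^{(k+1)}_{T^*_{Y,-};T^*_{X,-}}(U)$. I want to show $\al\be\al^{-1}\in\Gc^{(k+1)}_{T^*_{Y,-};T^*_{X,-}}(U)$. There are two conditions to verify. Membership in $\Gc^{(k+1)}_{T^*_{X,-}}$ follows immediately from Lemma \ref{rfngf874f98h0f3j3f3f}(i): both $\al$ and $\be$ lie in $\Gc^{(k)}_{T^*_{X,-}}$ and $\Gc^{(k+1)}_{T^*_{X,-}}$ respectively (via $u$), and $\Gc^{(k+1)}_{T^*_{X,-}}$ is normal in $\Gc^{(k)}_{T^*_{X,-}}$. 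Membership in $\mathcal Aut_{T^*_{Y,-};T^*_{X,-}}$, i.e. preservation of the ideal $\Ic_{T^*_{Y,-};T^*_{X,-}}$, follows because this subgroup is itself a group: $\al$ preserves $\Ic_{T^*_{Y,-};T^*_{X,-}}$ hence so does $\al^{-1}$, and $\be$ preserves it, so the composite $\al\be\al^{-1}$ does as well. Intersecting the two memberships gives $\al\be\al^{-1}\in\Gc^{(k)}_{T^*_{X,-}}\cap\mathcal Aut_{T^*_{Y,-};T^*_{X,-}}\cap\{\text{things mapping into }\Gc^{(k+1)}_{T^*_{X,-}}\}=\Gc^{(k+1)}_{T^*_{Y,-};T^*_{X,-}}$.

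The only genuine subtlety — and the step I would treat most carefully — is the bookkeeping of the two defining conditions: one must check that the "degree $\geq k+1$" condition and the "preserves $\Ic_{T^*_{Y,-};T^*_{X,-}}$" condition are each preserved by conjugation \emph{separately}, and that $\Gc^{(k+1)}_{T^*_{Y,-};T^*_{X,-}}$ is exactly the intersection of the corresponding subgroups of $\mathcal Aut_{T^*_{Y,-};T^*_{X,-}}$, so that verifying both conditions suffices. Both conditions reduce to the group-theoretic triviality that an intersection of two subgroups, one of which is normal in the relevant ambient group, is normal in the intersection with the other; concretely, $\Gc^{(k+1)}_{T^*_{Y,-};T^*_{X,-}}=\Gc^{(k+1)}_{T^*_{X,-}}\cap\mathcal Aut_{T^*_{Y,-};T^*_{X,-}}$ is normal in $\Gc^{(k)}_{T^*_{X,-}}\cap\mathcal Aut_{T^*_{Y,-};T^*_{X,-}}=\Gc^{(k)}_{T^*_{Y,-};T^*_{X,-}}$ because $\Gc^{(k+1)}_{T^*_{X,-}}\trianglelefteq\Gc^{(k)}_{T^*_{X,-}}$. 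Since everything is checked on stalks over an arbitrary point of $X$, this yields normality as sheaves of groups.
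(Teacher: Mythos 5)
Your proposal is correct and takes essentially the same approach as the paper: both reduce normality to Green's Lemma \ref{rfngf874f98h0f3j3f3f}(i) combined with the group-theoretic fact that intersecting a normal subgroup $N\trianglelefteq G$ with a subgroup $H\leq G$ yields a normal subgroup of $H$. The paper invokes this fact directly (its $(\star)$) and then verifies the identity $\Gc^{(k+1)}_{T^*_{X,-}}\cap\Gc^{(k)}_{T^*_{Y,-};T^*_{X,-}}=\Gc^{(k+1)}_{T^*_{Y,-};T^*_{X,-}}$; you instead spell out the conjugation check on the two defining conditions separately and then observe this is the same reduction, which is fine.
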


\begin{proof}
We will use the following classical result about groups and normal subgroups:
\begin{enumerate}[($\star$)]
	\item let $G$ be a group; $H< G$ a subgroup and $N\leq G$ a normal subgroup. Then $H\cap N$ is a normal subgroup of $H$.
\end{enumerate}
Green's lemma states $\Gc^{(k+1)}_{T^*_{X, -}}$ is a normal subgroup of $\Gc^{(k)}_{T^*_{X, -}}$. Now, by definition $\Gc^{(k)}_{T^*_{Y, -}; T^*_{X, -}} = \Gc^{(k)}_{T^*_{X, -}} \cap \mathcal Aut_{T^*_{Y,-}}\wedge^\bt T_{X, -}^*$. It is a subgroup of $\Gc^{(k)}_{T^*_{X, -}}$ and so by $(\star)$ above $\Gc^{(k+1)}_{T^*_{X, -}}\cap \Gc^{(k)}_{T^*_{Y, -}; T^*_{X, -}}$ will be a normal subgroup of $\Gc^{(k)}_{T^*_{Y, -}; T^*_{X, -}}$. Now note that 
\begin{align*}
\Gc_{T^*_{X, -}}^{(k+1)}\cap \Gc^{(k)}_{T^*_{Y, -}; T^*_{X, -}} 
&=
\left(\Gc_{T^*_{X, -}}^{(k+1)}\cap \Gc^{(k)}_{T^*_{X, -}} \right)\cap \mathcal Aut_{T^*_{Y,-}}\wedge^\bt T_{X, -}^*
\\
&=
\Gc_{T^*_{X, -}}^{(k+1)}\cap \mathcal Aut_{T^*_{Y,-}}\wedge^\bt T_{X, -}^*
\\
&=
\Gc^{(k+1)}_{T^*_{Y, -}; T^*_{X, -}} 
\end{align*}
The lemma now follows.
\end{proof}

\begin{REM}
\emph{As remarked in \cite{DW1}, the sheaf of groups $\Gc^{(k)}_{T^*_{Y, -}; T^*_{X, -}}$  need \emph{not} be a sheaf of normal subgroups of $\Gc^{(k)}_{T^*_{X, -}}$.}
\end{REM}

\subsection{The Obstruction Sheaves}
We will denote the quotient $\Gc^{(k)}_{T^*_{Y, -}; T^*_{X, -}}/\Gc^{(k+1)}_{T^*_{Y, -}; T^*_{X, -}}$ by the sheaf $\Qcl_{T^*_{Y, -};T^*_{X, -}}^{(k)}$. It is a sheaf of groups by Lemma \ref{hf94f9h498fj03jf3} above. Like the obstruction sheaves $\Qcl_{T^*_{X, -}}^{(k)}$ we have:

\begin{LEM} 
$\Qcl_{T^*_{Y, -};T^*_{X, -}}^{(k)}$ is a sheaf of abelian groups.
\end{LEM}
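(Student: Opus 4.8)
The plan is to mimic the proof of Lemma \ref{rfngf874f98h0f3j3f3f}(ii) given earlier in the excerpt, transplanting it to the relative setting. By the same group-theoretic fact, since $\Gc^{(k+1)}_{T^*_{Y, -}; T^*_{X, -}}$ is normal in $\Gc^{(k)}_{T^*_{Y, -}; T^*_{X, -}}$ (which is exactly Lemma \ref{hf94f9h498fj03jf3}), the quotient $\Qcl_{T^*_{Y, -};T^*_{X, -}}^{(k)}$ is abelian if and only if $\Gc^{(k+1)}_{T^*_{Y, -}; T^*_{X, -}}$ contains the commutator subgroup $\big[\Gc^{(k)}_{T^*_{Y, -}; T^*_{X, -}}, \Gc^{(k)}_{T^*_{Y, -}; T^*_{X, -}}\big]$. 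So everything reduces to a commutator estimate.

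First I would observe that $\Gc^{(k)}_{T^*_{Y, -}; T^*_{X, -}}$ is by definition a subgroup of $\Gc^{(k)}_{T^*_{X, -}}$. Therefore
\[
\big[\Gc^{(k)}_{T^*_{Y, -}; T^*_{X, -}}, \Gc^{(k)}_{T^*_{Y, -}; T^*_{X, -}}\big]
\subset
\big[\Gc^{(k)}_{T^*_{X, -}}, \Gc^{(k)}_{T^*_{X, -}}\big]
\subset
\Gc^{(4k)}_{T^*_{X, -}},
\]
where the last inclusion is precisely the Campbell--Baker--Hausdorff computation \eqref{9403983hf98hf83} established in the proof of Lemma \ref{rfngf874f98h0f3j3f3f}(ii). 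On the other hand, a commutator of two elements of $\Gc^{(k)}_{T^*_{Y, -}; T^*_{X, -}}$ is again an automorphism preserving the ideal $\Ic_{T^*_{Y, -};T^*_{X, -}}$, since $\mathcal Aut_{T^*_{Y,-}; T^*_{X, -}}$ is a group; that is, it lies in $\mathcal Aut_{T^*_{Y,-}; T^*_{X, -}}$. Hence
\[
\big[\Gc^{(k)}_{T^*_{Y, -}; T^*_{X, -}}, \Gc^{(k)}_{T^*_{Y, -}; T^*_{X, -}}\big]
\subset
\Gc^{(4k)}_{T^*_{X, -}} \cap \mathcal Aut_{T^*_{Y,-}; T^*_{X, -}}
=
\Gc^{(4k)}_{T^*_{Y, -}; T^*_{X, -}}.
\]
Since $4k \geq k+1$ for all $k\geq 2$, and since $\Gc^{(k^\p)}_{T^*_{Y, -}; T^*_{X, -}}\subset \Gc^{(k)}_{T^*_{Y, -}; T^*_{X, -}}$ whenever $k^\p\geq k$ (from the inclusions $\Gc^{(k^\p)}_{T^*_{X, -}}\subset \Gc^{(k)}_{T^*_{X, -}}$ intersected with $\mathcal Aut_{T^*_{Y,-}; T^*_{X, -}}$), we conclude $\Gc^{(4k)}_{T^*_{Y, -}; T^*_{X, -}}\subset \Gc^{(k+1)}_{T^*_{Y, -}; T^*_{X, -}}$, so the commutator subgroup is contained in $\Gc^{(k+1)}_{T^*_{Y, -}; T^*_{X, -}}$ and the quotient is abelian.

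The only genuinely new point beyond the already-proved absolute case is the stability observation $[\mathcal Aut_{T^*_{Y,-}; T^*_{X, -}}, \mathcal Aut_{T^*_{Y,-}; T^*_{X, -}}]\subset \mathcal Aut_{T^*_{Y,-}; T^*_{X, -}}$, which is immediate because $\mathcal Aut_{T^*_{Y,-}; T^*_{X, -}}$ is defined as a subgroup (the automorphisms preserving $\Ic_{T^*_{Y, -};T^*_{X, -}}$). I do not expect any real obstacle; the main thing to be careful about is that the CBH bound from \eqref{9403983hf98hf83} is genuinely a statement about elements of $\Gc^{(k)}_{T^*_{X, -}}$ and so applies verbatim to the subgroup $\Gc^{(k)}_{T^*_{Y, -}; T^*_{X, -}}$, after which the intersection with $\mathcal Aut_{T^*_{Y,-}; T^*_{X, -}}$ cuts it back down to the relative sheaf. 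One could alternatively phrase the whole argument at the Lie-algebra level, noting that $\gf^{(k)}_{T^*_{Y,-};T^*_{X,-}}$ consists of the degree-$k$ derivations of $\wedge^\bt T^*_{X,-}$ that additionally preserve $\Ic_{T^*_{Y, -};T^*_{X, -}}$, a condition stable under bracket; but the subgroup argument above is cleaner and avoids re-deriving the exponential correspondence.
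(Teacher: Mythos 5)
Your proof is correct and is precisely the fleshed-out version of what the paper asserts in one line ("this follows from the same argument as in Lemma \ref{rfngf874f98h0f3j3f3f}(ii)"). Your routing---bounding the commutator subgroup by $\big[\Gc^{(k)}_{T^*_{X,-}},\Gc^{(k)}_{T^*_{X,-}}\big]\subset\Gc^{(4k)}_{T^*_{X,-}}$ via the subgroup inclusion, then intersecting with $\mathcal Aut_{T^*_{Y,-};T^*_{X,-}}$ to land in $\Gc^{(4k)}_{T^*_{Y,-};T^*_{X,-}}$---is the cleanest way to run that argument, since it avoids having to re-establish the exponential correspondence for the relative sheaf.
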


\begin{proof}
This follows from the same argument as in Lemma \ref{rfngf874f98h0f3j3f3f}\emph{(ii)}.
\end{proof}

\begin{DEF}
\emph{Let $(Y, T_{Y,-}^*)\subset (X, T_{X,-}^*)$ be an embedding of models. The abelian sheaves $\Qcl_{T^*_{Y, -};T^*_{X, -}}^{(k)}$ associated to this embedding will be referred to as the \emph{$k$-th obstruction sheaves for the embedding}.}
\end{DEF}

\noindent
From commutativity of \eqref{rhf784f794hf8jf03j} we see that there will be induced the following maps on the obstruction sheaves:
\begin{align}
\xymatrix{
\ar[d]\Qcl_{T^*_{Y, -};T^*_{X, -}}^{(k)}\ar[r] & \Qcl_{T^*_{X, -}}^{(k)}
\\
\Qcl_{T^*_{Y, -}}^{(k)}
}
\label{rfj94fj093jf093f3fkf3}
\end{align}
Hence for each $k$ we have on cohomology:
\begin{align}
\xymatrix{
\ar[dr]\ar[d]\mbox{\v H}^1\big(X, \Gc^{(k)}_{T^*_{Y, -}; T^*_{X, -}}\big)
\ar[r] & \mbox{\v H}^1\big(X, \Gc^{(k)}_{T^*_{X, -}}\big)\ar[dr]
\\
\ar[dr]\mbox{\v H}^1\big(Y, \Gc^{(k)}_{T^*_{Y, -}}\big) &\ar[d] H^1\big(X, \Qcl_{T^*_{Y, -};T^*_{X, -}}^{(k)}\big)\ar[r] & H^1\big(X, \Qcl_{T^*_{X, -}}^{(k)}\big)
\\
&H^1\big(Y, \Qcl_{T^*_{Y, -}}^{(k)}\big) & 
}
\label{rf784f7h4f8jf09jf30}
\end{align}
Just like $(k-1)$-split supermanifolds we have the following definition.

\begin{DEF}
\emph{Let $\big(\Zc: \Yfr\subset \Xfr\big)$ be a $(k-1)$-split embedding. The image of $\big(\Zc: \Yfr\subset \Xfr\big)$ in $H^1\big(X, \Qcl_{T^*_{Y, -};T^*_{X, -}}^{(k)}\big)$ under the map in \eqref{rf784f7h4f8jf09jf30} will be referred to as the \emph{primary obstruction} of the embedding $\big(\Zc: \Yfr\subset \Xfr\big)$}
\end{DEF}

\noindent
By Lemma \ref{hf94f9h498fj03jf3} we are guaranteed the following result, analogous to Lemma \ref{rhf78rhf74hf98fj3f039} for supermanifolds.

\begin{LEM}\label{rbcigihf9hf93hoijeo}
A $(k-1)$-split embedding is $k$-split if and only if its primary obstruction vanishes.\qed
\end{LEM}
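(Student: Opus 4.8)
The plan is to run the same long-exact-sequence argument that underlies Lemma \ref{rhf78rhf74hf98fj3f039} (i.e.\ the exactness of \eqref{rf74h9f8h80fj039j}), but now for the sheaves attached to an embedding. By Lemma \ref{hf94f9h498fj03jf3} we have the short exact sequence of sheaves of groups
\[
\xymatrix{
\{1\}\ar[r] & \Gc^{(k+1)}_{T^*_{Y, -};T^*_{X, -}} \ar[r] & \Gc^{(k)}_{T^*_{Y, -};T^*_{X, -}} \ar[r]  & \Qcl_{T^*_{Y, -};T^*_{X, -}}^{(k)}\ar[r] & \{1\},
}
\]
which (since the quotient is abelian, and in any case for the first few terms of the non-abelian \v Cech sequence) induces a long exact sequence of pointed sets containing the piece
\[
\mbox{\v H}^1\big(X, \Gc^{(k+1)}_{T^*_{Y, -};T^*_{X, -}}\big)
\stackrel{\iota_*}{\lra}
\mbox{\v H}^1\big(X, \Gc^{(k)}_{T^*_{Y, -};T^*_{X, -}}\big)
\stackrel{\om_e}{\lra}
H^1\big(X, \Qcl_{T^*_{Y, -};T^*_{X, -}}^{(k)}\big),
\]
where $\om_e$ is precisely the map defining the primary obstruction of a $(k-1)$-split embedding. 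The statement is then the assertion that a class $(\Zc:\Yfr\subset\Xfr)\in \mbox{\v H}^1\big(X, \Gc^{(k)}_{T^*_{Y, -};T^*_{X, -}}\big)$ lies in the image of $\iota_*$ if and only if $\om_e(\Zc)=0$.

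First I would invoke exactness at the middle term to get the ``only if'' direction: if $\Zc$ lifts to a class in $\mbox{\v H}^1\big(X, \Gc^{(k+1)}_{T^*_{Y, -};T^*_{X, -}}\big)$ then $\om_e(\Zc)$ is the basepoint. For the ``if'' direction I would again use exactness: $\om_e(\Zc)=0$ means $\Zc$ is in the image of $\iota_*$, i.e.\ it comes from a $k$-split embedding. One subtlety is the usual one for non-abelian \v Cech cohomology: the fibre of $\om_e$ over the basepoint is the orbit of the basepoint of $\mbox{\v H}^1\big(X, \Gc^{(k+1)}_{T^*_{Y, -};T^*_{X, -}}\big)$ under the relevant action, rather than all of $\mbox{\v H}^1$ mapping onto it; but for the purpose of this lemma ``$\Zc$ is $k$-split'' is by definition the assertion that $\Zc$ admits a preimage in $\mbox{\v H}^1\big(X, \Gc^{(k+1)}_{T^*_{Y, -};T^*_{X, -}}\big)$ (compatibly, via \eqref{rf784f7h4f8jf09jf30}, with the corresponding lifts of $\Yfr$ and $\Xfr$), so exactness at the middle spot is exactly what is needed.

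The main obstacle, such as it is, is bookkeeping rather than mathematics: one must make sure that ``$k$-split embedding'' has been pinned down to mean a class in $\mbox{\v H}^1\big(X, \Gc^{(k+1)}_{T^*_{Y, -};T^*_{X, -}}\big)$ whose images under $u_*$ and $r_*$ (in the $(k+1)$ analogue of \eqref{rhfi4gf74hf8939fj3}) are $k$-split refinements of $\Xfr$ and $\Yfr$, and that the maps in \eqref{rf784f7h4f8jf09jf30} are compatible enough that a lift of $\Zc$ automatically induces lifts of $\Yfr$ and $\Xfr$. This compatibility is immediate from the commutativity of \eqref{rhf784f794hf8jf03j} and the induced diagram \eqref{rf784f7h4f8jf09jf30}, so no genuine difficulty arises. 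Hence the proof is essentially a one-line appeal to exactness of the long exact cohomology sequence associated to the short exact sequence of Green-type sheaves for the embedding, exactly parallel to the proof of Lemma \ref{rhf78rhf74hf98fj3f039}; this is why the statement is marked \qed in the text with no further argument.
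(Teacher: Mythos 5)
Your proposal is correct and is exactly the argument the paper intends: by Lemma \ref{hf94f9h498fj03jf3} one has the short exact sequence of Green-type sheaves of groups for the embedding, and the lemma is then a direct restatement of exactness of the induced long exact sequence of pointed sets on \v Cech cohomology, precisely as in the supermanifold case (Lemma \ref{rhf78rhf74hf98fj3f039} via \eqref{rf74h9f8h80fj039j}). The paper leaves the proof implicit with a \qed for this reason, and your treatment of the non-abelian subtlety and of the compatibility with \eqref{rf784f7h4f8jf09jf30} is consistent with the text.
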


\noindent
The relation of the primary obstructions of embeddings to those of supermanifolds can be readily deduced from commutativity of \eqref{rf784f7h4f8jf09jf30}. 

\begin{PROP}\label{rfb4fhf98f80j09fj930j}
Let $\big(\Zc: \Yfr\subset \Xfr\big)$ be a $(k-1)$-split embedding. Then under the maps in \eqref{rf784f7h4f8jf09jf30}, the primary obstruction $\om(\Zc)$ will map to $\om(\Yfr)$ and $\om(\Xfr)$ respectively, i.e., we have: 
\[
\xymatrix{
\ar@{|->}[d]\om(\Zc) \ar@{|->}[r] & \om(\Xfr)
\\
\om(\Yfr)
}
\]\qed
\end{PROP}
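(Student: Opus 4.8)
The plan is to deduce Proposition \ref{rfb4fhf98f80j09fj930j} purely from the naturality of the connecting map in non-abelian \v Cech cohomology, applied to the three short exact sequences of sheaves of groups sitting inside the commutative diagram \eqref{rhf784f794hf8jf03j} at level $k^\p = k+1$. Concretely, the inclusions $\Gc^{(k+1)}_{T^*_{Y,-};T^*_{X,-}} \subset \Gc^{(k)}_{T^*_{Y,-};T^*_{X,-}}$, $\Gc^{(k+1)}_{T^*_{Y,-}} \subset \Gc^{(k)}_{T^*_{Y,-}}$ and $\Gc^{(k+1)}_{T^*_{X,-}} \subset \Gc^{(k)}_{T^*_{X,-}}$ are normal (by Lemma \ref{hf94f9h498fj03jf3} and Lemma \ref{rfngf874f98h0f3j3f3f}\emph{(i)}), so each fits into a short exact sequence with abelian quotient $\Qcl^{(k)}_{\bullet}$. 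The maps $u: \Gc^{(k)}_{T^*_{Y,-};T^*_{X,-}} \to \Gc^{(k)}_{T^*_{X,-}}$ and $r: \Gc^{(k)}_{T^*_{Y,-};T^*_{X,-}} \to \Gc^{(k)}_{T^*_{Y,-}}$ restrict to the $(k+1)$-level subgroups (this is exactly what the commutativity of \eqref{rhf784f794hf8jf03j} records), hence induce vertical morphisms of short exact sequences, and therefore vertical morphisms of the associated long exact cohomology sequences.

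The key steps, in order: first, write down the morphism of short exact sequences induced by $r$, namely
\begin{align*}
\xymatrix{
\{1\}\ar[r] & \Gc^{(k+1)}_{T^*_{Y,-};T^*_{X,-}} \ar[r]\ar[d] & \Gc^{(k)}_{T^*_{Y,-};T^*_{X,-}} \ar[r]\ar[d]^r & \Qcl^{(k)}_{T^*_{Y,-};T^*_{X,-}} \ar[r]\ar[d] & \{1\}
\\
\{1\}\ar[r] & \Gc^{(k+1)}_{T^*_{Y,-}} \ar[r] & \Gc^{(k)}_{T^*_{Y,-}} \ar[r] & \Qcl^{(k)}_{T^*_{Y,-}} \ar[r] & \{1\}
}
\end{align*}
and similarly for $u$. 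Second, invoke the naturality of the connecting map $\om$ in the long exact sequence \eqref{rf74h9f8h80fj039j} attached to such a sequence: a morphism of short exact sequences of sheaves of groups induces a commuting square relating the two $\om$ maps. Applied to the $r$-square this gives $r_*^{\Qcl}\circ \om_{Y;X} = \om_Y \circ r_*^{\Gc}$, where $r_*^{\Qcl}$ and $r_*^{\Gc}$ are the maps on $H^1$ of the quotients and on $\check H^1$ of the $\Gc^{(k)}$'s respectively; applied to the $u$-square, $u_*^{\Qcl}\circ \om_{Y;X} = \om_X \circ u_*^{\Gc}$. Third, specialize to the class $\Zc \in \check H^1\big(X, \Gc^{(k)}_{T^*_{Y,-};T^*_{X,-}}\big)$ defining the embedding $(\Zc:\Yfr\subset\Xfr)$: by Definition \ref{fj89hf98f93jf9jf39} we have $r_*^{\Gc}(\Zc) = \Yfr$ and $u_*^{\Gc}(\Zc) = \Xfr$, and by definition $\om(\Zc) = \om_{Y;X}(\Zc)$, so the two commuting squares read $r_*^{\Qcl}\big(\om(\Zc)\big) = \om_Y(\Yfr) = \om(\Yfr)$ and $u_*^{\Qcl}\big(\om(\Zc)\big) = \om_X(\Xfr) = \om(\Xfr)$. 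Since the vertical arrows in \eqref{rf784f7h4f8jf09jf30} are precisely $r_*^{\Qcl}$ and $u_*^{\Qcl}$, this is the assertion of the proposition.

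The only genuinely delicate point is the naturality of the connecting map $\om$ in the \emph{non-abelian} setting: \eqref{rf74h9f8h80fj039j} is a sequence of \emph{pointed sets}, and $\om$ is merely a map of pointed sets, so one must be slightly careful that a morphism of short exact sequences of sheaves of groups does induce a strictly commuting square at the $\check H^1 \to H^1$ stage. This is standard — it follows by tracing a \v Cech $1$-cocycle for $\Gc^{(k)}$ through the definition of $\om$ (lift locally to $\Gc^{(k)}$, form the coboundary, which lands in $\Gc^{(k+1)}$, then project to the abelian quotient, whose $H^1$ is genuine cohomology), and observing that each construction commutes with the maps $r$ and $u$ levelwise; the target $\Qcl^{(k)}$ being abelian means the resulting class in $H^1$ is well-defined and the square commutes on the nose rather than only up to the action of $H^1$ of the quotient. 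I would state this as a short lemma (or simply cite the general fact that the boundary map in the cohomology sequence of a short exact sequence of sheaves of groups is natural in the sequence) and then the proof is the three-line diagram chase above. Everything else is formal bookkeeping already set up in \eqref{rhf784f794hf8jf03j}–\eqref{rf784f7h4f8jf09jf30}.
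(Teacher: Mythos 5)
Your proof is correct and is essentially the paper's argument: the paper deduces the proposition directly from commutativity of the diagram \eqref{rf784f7h4f8jf09jf30}, which is itself obtained by inducing \eqref{rhf784f794hf8jf03j} and \eqref{rfj94fj093jf093f3fkf3} on cohomology. You have simply made explicit the naturality of the non-abelian connecting map with respect to morphisms of short exact sequences of sheaves of groups — a point the paper leaves implicit — and then carried out the same three-line diagram chase.
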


\subsection{Obstructions to Existence}
Based on the primary obstructions of $\Xfr$, it is possible to deduce whether there will exist submanifolds $\Yfr\subset \Xfr$. The starting point if the following. 

\begin{LEM}\label{rfuefhfiuefeiojoeidep}
Let $(Y, T_{Y,-}^*)\subset (X, T_{X,-}^*)$. Then for each $k\geq2$, $\Qcl_{T^*_{Y, -};T^*_{X, -}}^{(k)}$ is a subsheaf of $\Qcl_{T^*_{X, -}}^{(k)}$. 
\end{LEM}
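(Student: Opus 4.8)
The plan is to produce, for each open set $U \subset X$, an injective group homomorphism $\Qcl_{T^*_{Y,-};T^*_{X,-}}^{(k)}(U) \hookrightarrow \Qcl_{T^*_{X,-}}^{(k)}(U)$ compatible with restrictions, thereby realising the first sheaf as a subsheaf of the second. The natural candidate is the map already appearing in \eqref{rfj94fj093jf093f3fkf3}, induced on quotients by the inclusion $u\colon \Gc^{(k)}_{T^*_{Y,-};T^*_{X,-}} \subset \Gc^{(k)}_{T^*_{X,-}}$. So the real content is injectivity of this induced map. Since both quotients are sheaves of abelian groups (by Lemma \ref{rfngf874f98h0f3j3f3f}\emph{(ii)} and the lemma immediately preceding), it suffices to check that the kernel is trivial.

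First I would unwind the definitions. By construction, $\Gc^{(k)}_{T^*_{Y,-};T^*_{X,-}} = \Gc^{(k)}_{T^*_{X,-}} \cap \mathcal Aut_{T^*_{Y,-};T^*_{X,-}}$, and likewise with $k$ replaced by $k+1$; moreover the computation carried out inside the proof of Lemma \ref{hf94f9h498fj03jf3} shows that $\Gc^{(k+1)}_{T^*_{X,-}} \cap \Gc^{(k)}_{T^*_{Y,-};T^*_{X,-}} = \Gc^{(k+1)}_{T^*_{Y,-};T^*_{X,-}}$. An element of the kernel of $\Qcl_{T^*_{Y,-};T^*_{X,-}}^{(k)}(U) \to \Qcl_{T^*_{X,-}}^{(k)}(U)$ is represented by some $\al \in \Gc^{(k)}_{T^*_{Y,-};T^*_{X,-}}(U)$ whose image in $\Qcl_{T^*_{X,-}}^{(k)}(U)$ is trivial, i.e.\ $\al \in \Gc^{(k+1)}_{T^*_{X,-}}(U)$. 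Then $\al$ lies in $\Gc^{(k+1)}_{T^*_{X,-}}(U) \cap \Gc^{(k)}_{T^*_{Y,-};T^*_{X,-}}(U)$, which by the identity just quoted equals $\Gc^{(k+1)}_{T^*_{Y,-};T^*_{X,-}}(U)$; hence $\al$ represents the trivial class in $\Qcl_{T^*_{Y,-};T^*_{X,-}}^{(k)}(U)$. This gives injectivity on sections, and since the maps are defined functorially from the inclusions of sheaves of groups they commute with restriction, so the sheaf map is injective.

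The only point requiring any care is purely formal: a monomorphism of sheaves of groups is automatically a monomorphism of the associated sheaves, but injectivity on sections over each $U$ is more than enough to conclude, so no sheafification subtlety intervenes. I do not anticipate a genuine obstacle here; the statement is essentially a bookkeeping consequence of the commutative diagram \eqref{rhf784f794hf8jf03j} together with the subgroup identity established in the proof of Lemma \ref{hf94f9h498fj03jf3}. If one prefers, the whole argument can be phrased via the elementary group-theory fact that if $H < G$ and $N \trianglelefteq G$ then $(H \cap N) = H \cap N$ is precisely the kernel of $H \to G/N$, so that $H/(H\cap N) \hookrightarrow G/N$; applying this with $G = \Gc^{(k)}_{T^*_{X,-}}$, $H = \Gc^{(k)}_{T^*_{Y,-};T^*_{X,-}}$, $N = \Gc^{(k+1)}_{T^*_{X,-}}$, and using $H \cap N = \Gc^{(k+1)}_{T^*_{Y,-};T^*_{X,-}}$, yields the claim directly at the level of stalks.
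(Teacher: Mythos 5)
Your proof is correct and follows essentially the same route as the paper's: both reduce the claim to the identity $\Gc^{(k+1)}_{T^*_{X,-}} \cap \Gc^{(k)}_{T^*_{Y,-};T^*_{X,-}} = \Gc^{(k+1)}_{T^*_{Y,-};T^*_{X,-}}$ established in the proof of Lemma \ref{hf94f9h498fj03jf3}, and then observe that this forces the kernel of the induced map on quotients to be trivial. Your closing reformulation via the second isomorphism theorem ($H/(H\cap N)\hookrightarrow G/N$) is a tidy way of saying exactly what the paper's diagram chase accomplishes.
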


\begin{proof}
By commutativity of \eqref{rhf784f794hf8jf03j} we have induced a map $\iota:\Qcl_{T^*_{Y, -};T^*_{X, -}}^{(k)}\ra \Qcl_{T^*_{X, -}}^{(k)}$ giving rise to the following morphism of short exact sequences of sheaves of groups:
\begin{align}
\xymatrix{
\ar[d] \Gc^{(k+1)}_{T^*_{Y, -};T^*_{X, -}} \ar[r]  & \Gc^{(k)}_{T^*_{Y, -};T^*_{X, -}}\ar[d]  \ar[r]  &\Qcl_{T^*_{Y, -};T^*_{X, -}}^{(k)}\ar@{-->}[d]^\iota
\\
\Gc^{(k+1)}_{T^*_{X, -}} \ar[r] & \Gc^{(k)}_{T^*_{X, -}} \ar[r]  & \Qcl_{T^*_{X, -}}^{(k)}
}
\label{rf74hf9hf8j30fj093}
\end{align}
The solid, vertical arrows are injective. We wish to show that the dashed arrow $\iota$ is also injective. To see this, observe that $\ker \iota$ can be identified with a subgroup of the image of $\Gc^{(k)}_{T^*_{Y, -};T^*_{X, -}} \cap \Gc^{(k+1)}_{T^*_{X, -}}$ in $\Gc^{(k)}_{T^*_{X, -}}$. This follows from short-exactness of the rows in \eqref{rf74hf9hf8j30fj093}. Now note that this intersection is precisely $\Gc^{(k+1)}_{T^*_{Y, -};T^*_{X, -}}$ by definition. Hence $\ker \iota\subset \Gc^{(k+1)}_{T^*_{Y, -};T^*_{X, -}}$. But $\Qcl_{T^*_{Y, -};T^*_{X, -}}^{(k)} = \Gc^{(k)}_{T^*_{Y, -};T^*_{X, -}}/ \Gc^{(k+1)}_{T^*_{Y, -};T^*_{X, -}}$ which means we must have $\ker \iota = (0)$ and so $\iota$ is injective. 
\end{proof}

\noindent
Let $\mathcal R^{(k)}_{T^*_{Y, -}, T^*_{X, -}}$ denote the quotient $\Qcl_{T^*_{X, -}}^{(k)}/\Qcl_{T^*_{Y, -};T^*_{X, -}}^{(k)}$. Then we will have a long exact sequence on cohomology containing the following exact piece:
\[
\cdots
\lra 
H^1\big(X, \Qcl_{T^*_{Y, -};T^*_{X, -}}^{(k)}\big) 
\stackrel{\iota_*}{\lra}
H^1\big(X, \Qcl_{T^*_{X, -}}^{(k)}\big)
\stackrel{\be_{Y;X}}{\lra} 
H^1\big(X, \mathcal R^{(k)}_{T^*_{Y, -}, T^*_{X, -}}\big)
\lra \cdots
\]
Now, the map $H^1\big(X, \Qcl_{T^*_{Y, -};T^*_{X, -}}^{(k)}\big) \ra H^1\big(X, \Qcl_{T^*_{X, -}}^{(k)}\big)$ from \eqref{rf784f7h4f8jf09jf30} is induced from the embedding $\iota: \Qcl_{T^*_{Y, -};T^*_{X, -}}^{(k)}\subset \Qcl_{T^*_{X, -}}^{(k)}$. Hence from Proposition \ref{rfb4fhf98f80j09fj930j}, if there exists an embedding of supermanifolds $\big(\Zc: \Yfr\subset \Xfr\big)$, then $\iota_*\om(\Zc) = \om(\Xfr)$. This leads to the following obstruction-to-existence result.

\begin{THM}\label{rbfy4gf78h47fh389f3}
Let $(Y, T_{Y,-}^*)\subset (X,T_{X,-}^*)$ be an embedding of models. For any $k\geq 2$, if $\Xfr$ is a $(k-1)$-split supermanifold with primary obstruction $\om(\Xfr)$ such that $\be_{Y;X}(\om_\Xfr)\neq0$, then there will not exist any $(k-1)$-split submanifold of $\Xfr$ modelled on $(Y, T_{Y,-}^*)$. \qed
\end{THM}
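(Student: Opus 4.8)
The plan is to argue by contraposition using the long exact cohomology sequence already set up just before the statement. Suppose, contrary to the conclusion, that there does exist a $(k-1)$-split submanifold $\Yfr\subset\Xfr$ modelled on $(Y,T_{Y,-}^*)$; concretely this means there is a $(k-1)$-split embedding $\big(\Zc:\Yfr\subset\Xfr\big)$ in the sense of Definition \ref{fj89hf98f93jf9jf39} with $u_*(\Zc)=\Xfr$. Passing to primary obstructions via Proposition \ref{rfb4fhf98f80j09fj930j} and the commutative square in \eqref{rf784f7h4f8jf09jf30}, the class $\om(\Zc)\in H^1\big(X,\Qcl_{T^*_{Y,-};T^*_{X,-}}^{(k)}\big)$ maps to $\om(\Xfr)$ under $\iota_*$. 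In particular $\om(\Xfr)$ lies in the image of $\iota_*$.

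Next I would invoke exactness of the displayed long exact sequence
\[
\cdots\lra H^1\big(X,\Qcl_{T^*_{Y,-};T^*_{X,-}}^{(k)}\big)\stackrel{\iota_*}{\lra}H^1\big(X,\Qcl_{T^*_{X,-}}^{(k)}\big)\stackrel{\be_{Y;X}}{\lra}H^1\big(X,\mathcal R^{(k)}_{T^*_{Y,-},T^*_{X,-}}\big)\lra\cdots
\]
which is legitimate precisely because Lemma \ref{rfuefhfiuefeiojoeidep} identifies $\Qcl_{T^*_{Y,-};T^*_{X,-}}^{(k)}$ as a subsheaf of the abelian sheaf $\Qcl_{T^*_{X,-}}^{(k)}$, so that $\mathcal R^{(k)}_{T^*_{Y,-},T^*_{X,-}}$ is the honest quotient sheaf and the sequence of sheaves is short exact. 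Exactness at $H^1\big(X,\Qcl_{T^*_{X,-}}^{(k)}\big)$ says $\img\iota_*=\ker\be_{Y;X}$. Since $\om(\Xfr)\in\img\iota_*$, we conclude $\be_{Y;X}(\om(\Xfr))=0$, contradicting the hypothesis $\be_{Y;X}(\om_\Xfr)\neq0$. Therefore no such $(k-1)$-split submanifold can exist.

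I anticipate no serious obstacle here: the statement is essentially a packaging of exactness together with Proposition \ref{rfb4fhf98f80j09fj930j}. The one point demanding a little care is the transition from the \emph{existence of a submanifold} to the \emph{algebraic statement} $\om(\Xfr)\in\img\iota_*$; this requires unwinding Definition \ref{fj89hf98f93jf9jf39} to produce the class $\Zc$ and then applying the compatibility of primary obstructions with the maps in \eqref{rf784f7h4f8jf09jf30}, i.e.\ the commutativity asserted in Proposition \ref{rfb4fhf98f80j09fj930j}. One should also note that $\iota_*$ here is exactly the map $H^1\big(X,\Qcl_{T^*_{Y,-};T^*_{X,-}}^{(k)}\big)\ra H^1\big(X,\Qcl_{T^*_{X,-}}^{(k)}\big)$ appearing in \eqref{rf784f7h4f8jf09jf30}, since both are induced by the sheaf inclusion $\iota$ of Lemma \ref{rfuefhfiuefeiojoeidep}; this identification is what lets the two ingredients be combined. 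Everything else is formal.
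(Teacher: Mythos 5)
Your argument is correct and matches the paper's reasoning exactly: the theorem is stated with a \qed precisely because it is a direct consequence of the preceding discussion, namely that an embedding $\big(\Zc:\Yfr\subset\Xfr\big)$ forces $\iota_*\om(\Zc)=\om(\Xfr)$ by Proposition \ref{rfb4fhf98f80j09fj930j}, whence $\om(\Xfr)\in\ker\be_{Y;X}$ by exactness of the long exact sequence coming from Lemma \ref{rfuefhfiuefeiojoeidep}. Your contrapositive phrasing and your explicit note identifying the $\iota_*$ of the long exact sequence with the map in \eqref{rf784f7h4f8jf09jf30} are just careful bookkeeping of the same argument.
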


\subsection{A Correspondence of Obstructions}
In this article we only consider holomorphic embeddings of models $(Y, T^*_{Y, -})\subset (X, T^*_{X, -})$. This means the embedding of underlying spaces $i: Y\subset X$ is holomorphic. As such the restriction functor $r = i^*$ from sheaves on $X$ to sheaves on $Y$ is exact (see e.g., \cite[p. 20]{GRAUREM}). From Lemma \ref{rfuefhfiuefeiojoeidep} we therefore obtain the following commutative diagram,
\begin{align}
\xymatrix{
0\ar[r] & \Qcl^{(k)}_{T^*_{Y, -}; T^*_{X, -}}\ar[d]^r \ar[r] &  \Qcl^{(k)}_{T^*_{X, -}}\ar[r] \ar[d]^r&\mathcal R^{(k)}_{T^*_{Y, -}; T^*_{X, -}}\ar[r]\ar[d]^r & 0
\\
0\ar[r] & \Qcl^{(k)}_{T^*_{Y, -}} \ar[r]^i &  \Qcl^{(k)}_{T^*_{X, -}}|_Y\ar[r] &\mathcal R^{(k)}_{T^*_{Y, -}; T^*_{X, -}}|_Y\ar[r] & 0
}
\label{rrtgiug484848hirgjrioiuii}
\end{align}
This diagram translates to a commutative diagram on cohomology. Upon combining it with \eqref{rf784f7h4f8jf09jf30} we obtain:

\begin{THM}
Let $(Y, T_{Y,-}^*)\subset (X, T_{X,-}^*)$ be an embedding of models. Then for each $k\geq 2$, the following diagram commutes:
\begin{align}
\xymatrix{
\ar[dr]\ar[d]\mbox{\emph{\v H}}^1\big(X, \Gc^{(k)}_{T^*_{Y, -}; T^*_{X, -}}\big)
\ar[r] & \mbox{\emph{\v H}}^1\big(X, \Gc^{(k)}_{T^*_{X, -}}\big)\ar[dr]
\\
\ar[dr]\mbox{\emph{\v H}}^1\big(Y, \Gc^{(k)}_{T^*_{Y, -}}\big) &\ar[d] H^1\big(X, \Qcl_{T^*_{Y, -};T^*_{X, -}}^{(k)}\big)\ar[r] & H^1\big(X, \Qcl_{T^*_{X, -}}^{(k)}\big)\ar[d]^{r_*}
\\
&H^1\big(Y, \Qcl_{T^*_{Y, -}}^{(k)}\big)\ar[r]_{i_*} & H^1\big(Y, \Qcl_{T^*_{X, -}}^{(k)}|_Y\big).
}
\label{eduhf89hf983hf983}
\end{align}
\qed
\end{THM}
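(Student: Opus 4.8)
The plan is to assemble the commutative cube from the two diagrams already in hand and then read off the bottom face. The top square of \eqref{eduhf89hf983hf983} — the three \v Cech cohomology sets fitting into a triangle with the middle-level $H^1$ groups — is exactly the cohomology of the diagram \eqref{rhf784f794hf8jf03j} of sheaves of groups, pushed through \v Cech cohomology, combined with the connecting maps $\om$ from the short exact sequences defining the obstruction sheaves; this is precisely diagram \eqref{rf784f7h4f8jf09jf30}, which we may invoke verbatim. So the only genuinely new content is the bottom square, involving the restriction maps $r_*\colon H^1(X,\Qcl^{(k)}_{T^*_{X,-}})\to H^1(Y,\Qcl^{(k)}_{T^*_{X,-}}|_Y)$ and the map $i_*$ induced by the inclusion $i\colon \Qcl^{(k)}_{T^*_{Y,-}}\hookrightarrow \Qcl^{(k)}_{T^*_{X,-}}|_Y$ from the lower row of \eqref{rrtgiug484848hirgjrioiuii}.

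First I would take the morphism of short exact sequences \eqref{rrtgiug484848hirgjrioiuii} and apply the (sheaf) cohomology functor on $X$, using that $r=i^*$ is exact so that $H^*(X,i_*(-))=H^*(Y,-)$ and that there is no higher-direct-image obstruction to worry about at the level we need. This yields a commutative ladder whose relevant rung is the square
\begin{align*}
\xymatrix{
H^1\big(X,\Qcl^{(k)}_{T^*_{Y,-};T^*_{X,-}}\big)\ar[r]\ar[d]^r & H^1\big(X,\Qcl^{(k)}_{T^*_{X,-}}\big)\ar[d]^{r_*}
\\
H^1\big(Y,\Qcl^{(k)}_{T^*_{Y,-}}\big)\ar[r]_{i_*} & H^1\big(Y,\Qcl^{(k)}_{T^*_{X,-}}|_Y\big).
}
\end{align*}
Here the top horizontal arrow is $\iota_*$ from Lemma \ref{rfuefhfiuefeiojoeidep}, the left vertical arrow is the $r_*$ appearing in \eqref{rf784f7h4f8jf09jf30} (restriction to the submanifold, at the level of obstruction sheaves), and the commutativity is immediate from functoriality of $H^1$ applied to the commuting left-hand square of \eqref{rrtgiug484848hirgjrioiuii}.

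Next I would glue this square onto diagram \eqref{rf784f7h4f8jf09jf30}. The two diagrams share the edge $H^1(X,\Qcl^{(k)}_{T^*_{Y,-};T^*_{X,-}})\to H^1(X,\Qcl^{(k)}_{T^*_{X,-}})$ and the vertices $H^1(Y,\Qcl^{(k)}_{T^*_{Y,-}})$ and $H^1(X,\Qcl^{(k)}_{T^*_{X,-}})$; the map $H^1(X,\Qcl^{(k)}_{T^*_{Y,-};T^*_{X,-}})\to H^1(Y,\Qcl^{(k)}_{T^*_{Y,-}})$ appearing as the left edge of the new square must be identified with the corresponding composite in \eqref{rf784f7h4f8jf09jf30} (the vertical map into $H^1(Y,\Qcl^{(k)}_{T^*_{Y,-}})$). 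That identification is the one place where a small check is needed: one must verify that the $r$ induced on $\Qcl^{(k)}$'s by restriction of group elements to $Y$ (the source of the left column of \eqref{rf784f7h4f8jf09jf30}) agrees with the $r$ appearing as the left vertical map of the diagram \eqref{rrtgiug484848hirgjrioiuii}. Both are induced by the same geometric restriction $i^*$, so they coincide by construction, but I would spell this out to be safe. Once the two diagrams are seen to agree on their shared subdiagram, the pasted diagram commutes because each constituent face does, and this pasted diagram is exactly \eqref{eduhf89hf983hf983}.

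The main obstacle is bookkeeping rather than mathematics: keeping straight the three flavours of "restriction" ($u$ versus $r$ in \eqref{gf874gf7h98f3j0}, the $r=i^*$ on sheaves, and the induced maps on quotient sheaves) and making sure the labelled arrows $r_*$ and $i_*$ in the bottom row of \eqref{eduhf89hf983hf983} are the ones coming from \eqref{rrtgiug484848hirgjrioiuii} and not some a priori different map. I do not anticipate any homological subtlety — everything is a first cohomology group and every square in sight is induced by an honest commuting square of sheaf morphisms — so once the identifications are pinned down the commutativity is formal, and the proof reduces to: ``paste \eqref{rf784f7h4f8jf09jf30} on top of the $H^1$ of the left square of \eqref{rrtgiug484848hirgjrioiuii}.''
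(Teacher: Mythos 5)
Your proposal matches the paper's own argument exactly: the paper simply notes that the morphism of short exact sequences \eqref{rrtgiug484848hirgjrioiuii} "translates to a commutative diagram on cohomology" and combines it with \eqref{rf784f7h4f8jf09jf30}, which is precisely your pasting of the two diagrams along the shared edge $H^1(X,\Qcl^{(k)}_{T^*_{Y,-};T^*_{X,-}})\to H^1(X,\Qcl^{(k)}_{T^*_{X,-}})$. Your extra bookkeeping checks (that the various restriction maps agree, that $r=i^*$ is exact) are sensible to spell out and do not deviate from the paper's route.
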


\noindent
Commutativity of the square in \eqref{eduhf89hf983hf983} and Proposition \ref{rfb4fhf98f80j09fj930j} give:

\begin{THM}\label{tg6fg487fh4hf89j3f03}
Let $\big(\Zc: \Yfr\subset \Xfr\big)$ be a $(k-1)$-split embedding of supermanifolds modelled on  $(Y, T_{Y,-}^*)$ and $(X, T_{X, -}^*)$. Then 
\[
i_*\big(\om(\Yfr)\big) = r_*\big(\om(\Xfr)\big).
\]
where $i_*$ and $r_*$ are the maps in \eqref{eduhf89hf983hf983}.\qed
\end{THM}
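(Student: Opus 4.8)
The plan is to deduce Theorem \ref{tg6fg487fh4hf89j3f03} purely formally from the commutative diagram \eqref{eduhf89hf983hf983} together with Proposition \ref{rfb4fhf98f80j09fj930j}. The key observation is that the diagram \eqref{eduhf89hf983hf983} is built by superimposing two pieces: the ``triangle'' part coming from \eqref{rf784f7h4f8jf09jf30}, which records how the primary obstruction of an embedding maps to the primary obstructions of $\Yfr$ and $\Xfr$; and the ``square'' part
\[
\xymatrix{
H^1\big(X, \Qcl_{T^*_{Y, -};T^*_{X, -}}^{(k)}\big)\ar[r]\ar[d] & H^1\big(X, \Qcl_{T^*_{X, -}}^{(k)}\big)\ar[d]^{r_*}
\\
H^1\big(Y, \Qcl_{T^*_{Y, -}}^{(k)}\big)\ar[r]_{i_*} & H^1\big(Y, \Qcl_{T^*_{X, -}}^{(k)}|_Y\big),
}
\]
which is the cohomology realisation of the left-hand square of the morphism of short exact sequences \eqref{rrtgiug484848hirgjrioiuii}.

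The steps, in order, are as follows. First I would start with the given $(k-1)$-split embedding $\big(\Zc:\Yfr\subset \Xfr\big)$, so $\Zc\in\mbox{\v H}^1\big(X,\Gc^{(k)}_{T^*_{Y,-};T^*_{X,-}}\big)$ with $r_*(\Zc)=\Yfr$ and $u_*(\Zc)=\Xfr$, and let $\om(\Zc)\in H^1\big(X,\Qcl^{(k)}_{T^*_{Y,-};T^*_{X,-}}\big)$ be its primary obstruction. Second, by Proposition \ref{rfb4fhf98f80j09fj930j}, $\om(\Zc)$ maps to $\om(\Xfr)$ under the horizontal arrow $H^1\big(X,\Qcl^{(k)}_{T^*_{Y,-};T^*_{X,-}}\big)\to H^1\big(X,\Qcl^{(k)}_{T^*_{X,-}}\big)$ and to $\om(\Yfr)$ under the vertical arrow to $H^1\big(Y,\Qcl^{(k)}_{T^*_{Y,-}}\big)$. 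Third, chase $\om(\Zc)$ around the commuting square displayed above: going right then down sends $\om(\Zc)\mapsto \om(\Xfr)\mapsto r_*\big(\om(\Xfr)\big)$, while going down then right sends $\om(\Zc)\mapsto\om(\Yfr)\mapsto i_*\big(\om(\Yfr)\big)$. Commutativity of the square forces these two images to agree, which is exactly the claimed identity $i_*\big(\om(\Yfr)\big)=r_*\big(\om(\Xfr)\big)$.

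The only thing genuinely requiring care — and hence the main obstacle — is justifying that the square in \eqref{eduhf89hf983hf983} really does commute with $\om(\Zc)$ sitting in its upper-left corner; that is, confirming that the ``primary obstruction'' maps $\om$ defined via \eqref{rf74h9f8h80fj039j} for the various sheaves of groups are compatible with the morphisms of short exact sequences in \eqref{rf74hf9hf8j30fj093} and \eqref{rrtgiug484848hirgjrioiuii}. This is a naturality statement for connecting maps in non-abelian \v Cech cohomology: a morphism of short exact sequences of sheaves of (possibly non-abelian) groups induces a commuting ladder on the associated cohomology sequences, so the connecting maps $\om$ commute with the induced vertical maps. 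Since the excerpt has already established (in Lemma \ref{rfuefhfiuefeiojoeidep}, Lemma \ref{hf94f9h498fj03jf3}, and the construction of \eqref{eduhf89hf983hf983}) that \eqref{rrtgiug484848hirgjrioiuii} is a genuine morphism of short exact sequences and that \eqref{eduhf89hf983hf983} commutes, this naturality is exactly what was packaged there, so the remaining argument is the short diagram chase above and nothing more.
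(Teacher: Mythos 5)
Your proposal is correct and is essentially identical to the paper's argument, which deduces the identity in one line from commutativity of the square in \eqref{eduhf89hf983hf983} together with Proposition \ref{rfb4fhf98f80j09fj930j}. Your extra remark on naturality of the connecting map for the morphism of short exact sequences \eqref{rrtgiug484848hirgjrioiuii} is exactly the content the paper packages into the commutativity of \eqref{eduhf89hf983hf983}, so you are not taking a different route, merely spelling out the same one.
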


\subsection{Embeddings of Splitting Type $(k; k, k+1)$}
Embeddings in a split supermanifold are a particular class of embeddings of splitting type $(k; k, k+1)$.  We single such embeddings out here as their obstruction classes admit a nice characterisation. Consider the diagram on cohomology induced from \eqref{rrtgiug484848hirgjrioiuii}. The piece of relevance for our present purposes is:
\begin{align}
\xymatrix{
\ar[d]^{r_*}H^0\big(X,  \mathcal R^{(k)}_{T^*_{Y, -}; T^*_{X, -}}\big)
\ar[r]^{\dt_1} & H^1\big(X, \Qcl^{(k)}_{T^*_{Y, -}; T^*_{X, -}}\big)\ar[d] \ar[r] & H^1\big(X, \Qcl^{(k)}_{T^*_{X, -}}\big)\ar[d]
\\
H^0\big(Y,  \mathcal R^{(k)}_{T^*_{Y, -}; T^*_{X, -}}|_Y\big)\ar[r]^{\dt_2} &
H^1\big(Y, \Qcl^{(k)}_{T^*_{Y, -}}\big)\ar[r] 
&
H^1(Y, \Qcl^{(k)}_{T^*_{X, -}}|_Y\big)
}
\label{cfgjgjgjgjfjjfhfgbgjf}
\end{align}
We are thus led to the following:

\begin{THM}\label{h89h890309i390}
Let $\big(\Zc: \Yfr\subset \Xfr\big)$ be an embedding of splitting type $(k;k, k+1)$. Then there exists a global section $\vp\in H^0\big(X,  \mathcal R^{(k)}_{T^*_{Y, -}; T^*_{X, -}}\big)$ such that 
\begin{align*}
\dt_1(\vp) = \om(\Zc)&&\mbox{and}&&\dt_2\big(r_*(\vp)\big) = \om(\Yfr).
\end{align*}
\end{THM}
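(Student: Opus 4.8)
The plan is to exploit the exactness of the bottom two rows of cohomology in the commutative diagram \eqref{cfgjgjgjgjfjjfhfgbgjf}, together with the vanishing of certain obstruction groups forced by the hypothesis on the total splitting type. Since the embedding $\big(\Zc:\Yfr\subset\Xfr\big)$ has total splitting type $(k;k,k+1)$, the supermanifold $\Xfr$ is in fact $k$-split, so by Lemma \ref{rhf78rhf74hf98fj3f039} its primary obstruction in degree $k$ vanishes: $\om(\Xfr) = 0$ in $H^1\big(X,\Qcl^{(k)}_{T^*_{X,-}}\big)$. By Proposition \ref{rfb4fhf98f80j09fj930j}, the image of $\om(\Zc)$ in $H^1\big(X,\Qcl^{(k)}_{T^*_{X,-}}\big)$ equals $\om(\Xfr)=0$. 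Therefore $\om(\Zc)$ lies in the kernel of the map $H^1\big(X,\Qcl^{(k)}_{T^*_{Y,-};T^*_{X,-}}\big)\ra H^1\big(X,\Qcl^{(k)}_{T^*_{X,-}}\big)$, which by exactness of the top row of \eqref{cfgjgjgjgjfjjfhfgbgjf} is the image of $\dt_1$. Hence there exists $\vp\in H^0\big(X,\mathcal R^{(k)}_{T^*_{Y,-};T^*_{X,-}}\big)$ with $\dt_1(\vp) = \om(\Zc)$, which is the first assertion.

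For the second assertion, I would chase the commutative square on the left of \eqref{cfgjgjgjgjfjjfhfgbgjf} relating $\dt_1$ and $\dt_2$ via the vertical restriction maps $r_*$. Commutativity gives $\dt_2\big(r_*(\vp)\big) = r_*\big(\dt_1(\vp)\big) = r_*\big(\om(\Zc)\big)$ inside $H^1\big(Y,\Qcl^{(k)}_{T^*_{Y,-}}\big)$; here I am using that the vertical map on the middle term of \eqref{cfgjgjgjgjfjjfhfgbgjf} is precisely the map $\mbox{\v H}^1\big(X,\Gc^{(k)}_{T^*_{Y,-};T^*_{X,-}}\big)\ra\mbox{\v H}^1\big(Y,\Gc^{(k)}_{T^*_{Y,-}}\big)$ composed with the obstruction map, so that it carries $\om(\Zc)$ to $\om\big(r_*(\Zc)\big) = \om(\Yfr)$. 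It then remains to identify $r_*\big(\om(\Zc)\big)$ with $\om(\Yfr)$, which is again a direct consequence of Proposition \ref{rfb4fhf98f80j09fj930j} (the statement that $\om(\Zc)$ maps to $\om(\Yfr)$ under the left-hand vertical arrows of \eqref{rf784f7h4f8jf09jf30}, which are compatible with those in \eqref{cfgjgjgjgjfjjfhfgbgjf}). Combining, $\dt_2\big(r_*(\vp)\big) = \om(\Yfr)$, as required.

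The main obstacle I anticipate is bookkeeping rather than mathematical depth: one must verify carefully that the connecting homomorphisms $\dt_1$ and $\dt_2$ appearing in \eqref{cfgjgjgjgjfjjfhfgbgjf} are genuinely compatible with the restriction maps $r_*$ — that is, that the left square in \eqref{cfgjgjgjgjfjjfhfgbgjf} commutes — and that the vertical map out of the middle term factors through the obstruction map of Proposition \ref{rfb4fhf98f80j09fj930j} in the way claimed. This compatibility is built into the construction of \eqref{rrtgiug484848hirgjrioiuii} and \eqref{eduhf89hf983hf983}, being the naturality of the long exact cohomology sequence applied to a morphism of short exact sequences of sheaves, so no new input is needed; but the identification of the relevant terms and maps across the several diagrams \eqref{rf784f7h4f8jf09jf30}, \eqref{eduhf89hf983hf983}, \eqref{rrtgiug484848hirgjrioiuii} and \eqref{cfgjgjgjgjfjjfhfgbgjf} must be done explicitly. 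One subtle point to check is that the hypothesis ``$\Xfr$ is $k$-split'' (equivalently, splitting type $k+1\geq k+1$) is exactly what forces $\om(\Xfr)=0$ in the degree-$k$ obstruction space, and that this is the correct degree to be looking at given that $\Zc$ has splitting type $k-1$; here the indexing conventions of Definition \ref{rfh89hf98f0j30jf03jf903} and the total splitting type must be handled with care.
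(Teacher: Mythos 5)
Your argument is correct and follows essentially the same route as the paper's: use the splitting type to force $\om(\Xfr)=0$, push $\om(\Zc)$ to zero via Proposition \ref{rfb4fhf98f80j09fj930j}, and then appeal to exactness in \eqref{cfgjgjgjgjfjjfhfgbgjf} to produce $\vp$ with $\dt_1(\vp)=\om(\Zc)$. The one small stylistic difference is in the second assertion: the paper separately invokes Theorem \ref{tg6fg487fh4hf89j3f03} to show $\om(\Yfr)$ lies in the kernel of $H^1\big(Y,\Qcl^{(k)}_{T^*_{Y,-}}\big)\ra H^1\big(Y,\Qcl^{(k)}_{T^*_{X,-}}|_Y\big)$ and then invokes exactness of the bottom row, whereas you derive $\dt_2\big(r_*(\vp)\big)=\om(\Yfr)$ directly by chasing the left commutative square and re-using Proposition \ref{rfb4fhf98f80j09fj930j} to identify $r_*\big(\om(\Zc)\big)$ with $\om(\Yfr)$. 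Your route is arguably tighter, since exactness of the bottom row alone only guarantees \emph{some} preimage of $\om(\Yfr)$ under $\dt_2$, whereas what the statement actually asserts is that $r_*(\vp)$ is such a preimage, and the commutativity you invoke is precisely what pins this down. Both arguments rely on the same underlying naturality of \eqref{rrtgiug484848hirgjrioiuii} and \eqref{cfgjgjgjgjfjjfhfgbgjf}, so this is a refinement of presentation rather than a genuinely different proof.
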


\begin{proof}
Recall, if $\big(\Zc: \Yfr\subset \Xfr\big)$ is an embedding of splitting type $(k;k, k+1)$, then $\Yfr$ and $\Zc$ will be $(k-1)$-split while $\Xfr$ will be $k$-split. In particular, its primary obstruction \emph{as a $(k-1)$-split supermanifold} will vanish (c.f., Theorem \ref{rhf78rhf74hf98fj3f039}). Hence $\om(\Zc)$ will map to zero in $H^1\big(X, \Qcl_{T^*_{X, -}}^{(k)}\big)$ by Proposition \ref{rfb4fhf98f80j09fj930j}; and $\om(\Yfr)$ will map to zero in $H^1\big(X, \Qcl_{T^*_{X, -}}^{(k)}|_Y\big)$ by Theorem \ref{tg6fg487fh4hf89j3f03}. The present theorem now follows from exactness of the rows in \eqref{cfgjgjgjgjfjjfhfgbgjf}. 
\end{proof}

\section{Conormal Sheaves}

\noindent
The rows in the diagram \eqref{rrtgiug484848hirgjrioiuii} will be referred to as \emph{obstruction sequences} associated to an embedding of models $(Y, T^*_{Y, -})\subset (X, T^*_{X, -})$. We refer to the top row as the \emph{ambient obstruction sequence} while the bottom row will be referred to as the \emph{embedded obstruction sequence}. Our objective in this section is to relate these sequences with appropriately twisted, conormal sheaves.

\subsection{Obstruction Sheaves}
We recall here an explicit description of the obstruction sheaves obtained by Green in \cite{GREEN}. To any model $(Z, T^*_{Z, -})$, the obstruction sheaves are given by:
\begin{align}
\Qcl^{(k)}_{T^*_{Z, -}}
\cong 
\left\{
\begin{array}{ll}
\mathcal Hom_{\Oc_Z}\big(T^*_Z,\wedge^kT^*_{Z, -}\big)
&\mbox{if $k$ is even;}
\\
\mathcal Hom_{\Oc_Z}\big(T^*_{Z, -},\wedge^kT^*_{Z, -}\big)
&\mbox{if $k$ is odd.}
\end{array}
\right.
\label{rgf784hf7hf9f40fk04kf48j03j903}
\end{align}
For convenience we use the following notation
\begin{align*}
T^*_{Z, (\pm)^k}
=
\left\{
\begin{array}{ll}
T^*_Z& \mbox{$k$ is even};
\\
T^*_{Z, -} &\mbox{$k$ is odd};
\end{array}
\right.
\end{align*}
Then \eqref{rgf784hf7hf9f40fk04kf48j03j903} can be conveniently stated: 
\begin{align}
\Qcl_{T^*_{X, -}}^{(k)} \cong \mathcal Hom_{\Oc_Z}\big(T^*_{Z, (\pm)^k}, \wedge^k T^*_{Z, -}\big). 
\label{mvnfngbjckdkf}
\end{align}
The obstruction sheaf associated to an embedding of models is however a little more subtle.

\subsection{The Obstruction Sheaf for Embeddings}
Let $f: (Y, T^*_{Y, -})\subset (X, T^*_{X, -})$ be an embedding of models. Recall $f = (i, f^\sharp)$ where $i : Y\subset X$ and $f^\sharp : T^*_{X, -} \ra i_*T^*_{Y, -}\ra0$ (equivalently, $i^*T^*_{X, -} \ra T^*_{Y, -}$ is a surjection). The $k$-th obstruction sheaf associated to $f$ is $\Qcl_{T^*_{Y, -}; T^*_{X, -}}^{(k)}$. In Lemma \ref{rfuefhfiuefeiojoeidep} we found that $\Qcl_{T^*_{Y, -}; T^*_{X, -}}^{(k)}$ is a subsheaf of $\Qcl_{T^*_{X, -}}^{(k)}$. By construction, it pulls back to $\Qcl_{T^*_{Y, -}}^{(k)}$. Hence, we can view $\Qcl_{T^*_{Y, -}; T^*_{X, -}}^{(k)}$ as those sections of $\Qcl_{T^*_{X, -}}^{(k)}$ which pullback to $\Qcl_{T^*_{Y, -}}^{(k)}$. Phrased in this way, $\Qcl_{T^*_{Y, -}; T^*_{X, -}}^{(k)}$ can be seen to satisfy a lifting property. To state it, firstly observe that there exists a natural injection $\Qcl_{T^*_{Y, -}}^{(k)}\ra i^* \Qcl_{T^*_{X, -}}^{(k)}$. This can be deduced from Green's characterisation of the obstruction sheaves in \eqref{mvnfngbjckdkf} combined with the surjection $i^*T^*_{X, (\pm)^k} \ra T^*_{Y, (\pm)^k}$. With this observation we present:

\begin{LIFT}
Let $\Fc$ be a sheaf of $\Oc_X$-modules and suppose $\phi: \Fc \ra \Qcl_{T^*_{X, -}}^{(k)}$ is a morphism such that $i^*\phi$ factors through $\Qcl_{T^*_{Y, -}}^{(k)}\ra i^* \Qcl_{T^*_{X, -}}^{(k)}$, i.e., that there exists a morphism $v : i^*\Fc\ra  \Qcl_{T^*_{Y, -}}^{(k)}$ commuting the following diagram:
\[
\xymatrix{
\ar[d]\Fc\ar[rr]^\phi & & \Qcl_{T^*_{X, -}}^{(k)}\ar[d]
\\
i^*\Fc \ar@{-->}[r]^v \ar@/_1.5pc/@{.>}[rr]_{i^*\phi} & \Qcl_{T^*_{Y, -}}^{(k)} \ar[r] & i^* \Qcl_{T^*_{X, -}}^{(k)}
}
\]
Then $\phi$ factors through the subsheaf $\Qcl^{(k)}_{T^*_{Y, -}; T^*_{X, -}}$. That is, there exists a unique morphism $u: \Fc \ra \Qcl^{(k)}_{T^*_{Y, -}; T^*_{X, -}}$ lifting $v$. In terms of diagrams, the lifting property can be summarised by: given $v$, there exists $u$ commuting the following, 
\begin{align}
\xymatrix{
\ar[d]\Fc \ar@{-->}[dr]^u\ar@/^1pc/@{>}[drr]^\phi
\\
\ar@/_3.5pc/@{>}[drr]_{i^*\phi} i^*\Fc\ar@{-->}[dr]^v& \ar[d] \Qcl^{(k)}_{T^*_{Y, -}; T^*_{X, -}} \ar[r] & \Qcl_{T^*_{X, -}}^{(k)}\ar[d]
\\
& \Qcl^{(k)}_{T^*_{Y, -}}\ar[r] & i^*\Qcl_{T^*_{X, -}}^{(k)}
}
\label{rjf4hf89h49f88fj30j0}
\end{align}
\end{LIFT}
~\\\\
\noindent 
We conclude with the following useful result.

\begin{LEM}\label{rojpjfiorjojfpjepofjep}
Let $\Fc$ be a subsheaf of $\Qcl_{T^*_{X, -}}^{(k)}$ which pulls back to $\Qcl^{(k)}_{T^*_{Y, -}}$. Then $\Fc$ is isomorphic to $\Qcl^{(k)}_{T^*_{Y, -}; T^*_{X, -}}$.  \qed
\end{LEM}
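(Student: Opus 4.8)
The plan is to apply the Lifting Property to the inclusion $\Fc \hookrightarrow \Qcl^{(k)}_{T^*_{X,-}}$ itself, and then separately exhibit a morphism in the reverse direction $\Qcl^{(k)}_{T^*_{Y,-};T^*_{X,-}} \to \Fc$, finally checking the two composites are identities. First I would take $\phi : \Fc \to \Qcl^{(k)}_{T^*_{X,-}}$ to be the given inclusion. By hypothesis $\Fc$ pulls back to $\Qcl^{(k)}_{T^*_{Y,-}}$, which is exactly the statement that $i^*\phi : i^*\Fc \to i^*\Qcl^{(k)}_{T^*_{X,-}}$ factors through the natural injection $\Qcl^{(k)}_{T^*_{Y,-}} \to i^*\Qcl^{(k)}_{T^*_{X,-}}$; call the factoring map $v : i^*\Fc \to \Qcl^{(k)}_{T^*_{Y,-}}$. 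The Lifting Property then produces a unique $u : \Fc \to \Qcl^{(k)}_{T^*_{Y,-};T^*_{X,-}}$ commuting the diagram \eqref{rjf4hf89h49f88fj30j0}; in particular the composite $\Fc \xrightarrow{u} \Qcl^{(k)}_{T^*_{Y,-};T^*_{X,-}} \hookrightarrow \Qcl^{(k)}_{T^*_{X,-}}$ equals the original inclusion $\phi$.

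Next I would argue that $u$ is an isomorphism onto $\Qcl^{(k)}_{T^*_{Y,-};T^*_{X,-}}$. Since $u$ composed with the inclusion into $\Qcl^{(k)}_{T^*_{X,-}}$ is the inclusion of $\Fc$, and the inclusion $\Qcl^{(k)}_{T^*_{Y,-};T^*_{X,-}} \hookrightarrow \Qcl^{(k)}_{T^*_{X,-}}$ is injective (Lemma \ref{rfuefhfiuefeiojoeidep}), $u$ is injective. For surjectivity I would check at the level of sections over an open $U \subseteq X$: the recollection preceding the Lifting Property identifies $\Qcl^{(k)}_{T^*_{Y,-};T^*_{X,-}}$ with precisely those sections of $\Qcl^{(k)}_{T^*_{X,-}}$ that pull back (restrict along $i$) into the subsheaf $\Qcl^{(k)}_{T^*_{Y,-}} \subseteq i^*\Qcl^{(k)}_{T^*_{X,-}}$. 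But $\Fc$, by hypothesis, is a subsheaf of $\Qcl^{(k)}_{T^*_{X,-}}$ pulling back to $\Qcl^{(k)}_{T^*_{Y,-}}$, so every section of $\Qcl^{(k)}_{T^*_{Y,-};T^*_{X,-}}$ satisfying this membership condition is simultaneously a section of $\Fc$ and conversely. Hence $u$ identifies $\Fc$ with the full subsheaf $\Qcl^{(k)}_{T^*_{Y,-};T^*_{X,-}}$ of $\Qcl^{(k)}_{T^*_{X,-}}$, giving the desired isomorphism.

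The main obstacle I anticipate is making the phrase ``$\Fc$ pulls back to $\Qcl^{(k)}_{T^*_{Y,-}}$'' fully precise and compatible with the hypothesis of the Lifting Property, so that the factoring map $v$ genuinely exists. Concretely, one needs that the composite $i^*\Fc \to i^*\Qcl^{(k)}_{T^*_{X,-}}$ has image landing inside the embedded copy of $\Qcl^{(k)}_{T^*_{Y,-}}$ — this is where the two a priori different meanings of ``pulls back to'' (as an abstract isomorphism of pullbacks versus as a subsheaf condition under $i^*$) must be reconciled; once this is granted, the rest is formal diagram chasing using injectivity of the vertical and horizontal maps in \eqref{rjf4hf89h49f88fj30j0} together with the uniqueness clause of the Lifting Property. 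A secondary point to verify is that the natural injection $\Qcl^{(k)}_{T^*_{Y,-}} \to i^*\Qcl^{(k)}_{T^*_{X,-}}$ — built from Green's description \eqref{mvnfngbjckdkf} and the surjection $i^*T^*_{X,(\pm)^k} \to T^*_{Y,(\pm)^k}$ — is indeed a subsheaf inclusion with the identification just used, but this was already established in the discussion preceding the Lifting Property.
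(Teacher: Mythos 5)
Your plan --- apply the Lifting Property to the inclusion $\phi : \Fc \hookrightarrow \Qcl^{(k)}_{T^*_{X,-}}$ --- is the route the paper intends: the paper itself offers no proof, closing the statement with $\qed$ immediately after presenting the Lifting Property, so the Lemma is meant to be read off from it. Your construction of the lift $u : \Fc \to \Qcl^{(k)}_{T^*_{Y,-};T^*_{X,-}}$ and the injectivity argument (if $\iota \circ u = \phi$ is injective then so is $u$) are both correct and precisely what the Lifting Property delivers.

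The surjectivity paragraph, however, has a genuine gap, and it is a gap in the direction you do not flag. You write that every section of $\Qcl^{(k)}_{T^*_{Y,-};T^*_{X,-}}$ satisfying the membership condition is a section of $\Fc$ ``and conversely.'' The converse (that every section of $\Fc$ lies in $\Qcl^{(k)}_{T^*_{Y,-};T^*_{X,-}}$) is exactly what the Lifting Property already gives; that part is fine. But the forward direction, $\Qcl^{(k)}_{T^*_{Y,-};T^*_{X,-}} \subseteq \Fc$, \emph{is} the surjectivity you are trying to establish, and it does not follow from the stated hypothesis. The hypothesis says $\Fc$ is \emph{a} subsheaf whose pullback is $\Qcl^{(k)}_{T^*_{Y,-}}$, not that $\Fc$ contains \emph{all} sections whose pullback lands in $\Qcl^{(k)}_{T^*_{Y,-}}$; two distinct subsheaves can have the same image under $i^*$. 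Concretely, the restriction $i^*$ sees nothing on the open set $X \setminus Y$, so the condition $i^*\Fc = \Qcl^{(k)}_{T^*_{Y,-}}$ places no constraint on $\Fc$ away from $Y$. Yet $\Qcl^{(k)}_{T^*_{Y,-};T^*_{X,-}}$ coincides with $\Qcl^{(k)}_{T^*_{X,-}}$ on $X\setminus Y$, because there $\Ic_Y = \Oc_X$, so $\Ic_{T^*_{Y,-};T^*_{X,-}}$ is all of $\wedge^\bt T^*_{X,-}$ and every automorphism preserves it. A subsheaf $\Fc$ could therefore be a proper subsheaf of $\Qcl^{(k)}_{T^*_{Y,-};T^*_{X,-}}$ while still satisfying the pullback hypothesis as literally stated. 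You also announce, in your opening sentence, that you will ``separately exhibit a morphism in the reverse direction $\Qcl^{(k)}_{T^*_{Y,-};T^*_{X,-}} \to \Fc$,'' but no such morphism is actually constructed --- the surjectivity paragraph substitutes a membership argument that presupposes the conclusion. To close the gap one would either need to read ``pulls back to $\Qcl^{(k)}_{T^*_{Y,-}}$'' in the stronger sense that $\Fc$ is the full preimage of $\Qcl^{(k)}_{T^*_{Y,-}}$ under the natural map $\Qcl^{(k)}_{T^*_{X,-}} \to i_* i^*\Qcl^{(k)}_{T^*_{X,-}}$ (which makes the claim tautological), or verify surjectivity directly in the use case of Proposition~\ref{kdmlkcnekjkjejnevne}, where $\Fc = \mathcal{H}om_{\Oc_X}(i_*T^*_{Y,(\pm)^k}, \wedge^k T^*_{X,-})$ has additional structure that can be exploited. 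Since the paper itself supplies no argument here, your proposal is no less complete than the source, and you at least flag the ambiguity in ``pulls back to'' --- but the ``and conversely'' step as written is circular.
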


\subsection{The Embedded Obstruction Sequence}
Let $f = (i, f^\sharp):(Y, T_{Y, -}^*)\subset (X, T^*_{X, -})$ be an embedding of models. We set,
\begin{align}
\nu_{Y/X, (\pm)^k}^*
:=
\left\{
\begin{array}{ll}
\Ic_Y/\Ic_Y^2&\mbox{$k$ is even};
\\
i^*K_{T^*_{Y, -}; T^*_{X, -}}&\mbox{$k$ is odd}
\end{array}
\right.
\label{rhf784hf984hf8j03j0}
\end{align}
where $\Ic_Y$ is the ideal sheaf of $Y\subset X$ and $K_{T^*_{Y, -}; T^*_{X, -}}$ is the kernel of the surjection $T_{X, -}^* \ra f^*T^*_{Y, -}$. Since $f$ is holomorphic we have the `conormal bundle sequence',
\[
0 \lra \nu_{Y/X, (\pm)^k}^*\lra i^*T^*_{X, (\pm)^k} \lra T^*_{Y, (\pm)^k}\lra0.
\] 
Since $\wedge^kT^*_{Y, -}$ is locally free, the contravariant functor $\mathcal Hom_{\Oc_Y}\big(-, \wedge^k T^*_{Y, -}\big)$ is exact. We therefore get:
\begin{align}
0
\lra
\mathcal Hom_{\Oc_Y}\big(T^*_{Y, (\pm)^k}, \wedge^k T^*_{Y, -}\big)
\lra
\mathcal H&om_{\Oc_Y}\big(i^*T^*_{X, (\pm)^k},  \wedge^k T^*_{Y, -}\big)
\label{nvnbnfjfdkvnvnfjfj}
\\\notag
&\lra
\mathcal Hom_{\Oc_Y}\big( \nu_{Y/X, (\pm)^k}^*,  \wedge^k T^*_{Y, -}\big)
\lra
0
\end{align}
Note that the left-most term in \eqref{nvnbnfjfdkvnvnfjfj} is isomorphic to $\Qcl_{T^*_{Y, -}}^{(k)}$ by \eqref{mvnfngbjckdkf}. As for the next term observe that, again by \eqref{mvnfngbjckdkf},
\begin{align}
i^*\Qcl_{T^*_{X, -}}^{(k)}
&\cong
i^*\mathcal Hom_{\Oc_X}\big(T^*_{X, (\pm)^k}, \wedge^k T^*_{X, -}\big)
\notag
\\
&\cong 
\mathcal Hom_{\Oc_Y}\big(i^*T^*_{X, (\pm)^k}, i^*\wedge^k T^*_{X, -}\big)
%&&\mbox{(by locally freeness)}
\notag
\\
&\lra
\mathcal Hom_{\Oc_Y}\big(f^*T^*_{X, (\pm)^k}, \wedge^k T^*_{Y, -}\big).
\label{ojppnoioeo}
\end{align}
To explain the map in \eqref{ojppnoioeo} recall that we have the surjection $f^\sharp: i^*T^*_{X, -}\ra T^*_{Y, -}\ra0$. This induces a surjection on exterior powers since the operation of taking exterior powers is right exact. Hence we have a natural transformation of functors $\mathcal Hom_{\Oc_Y}\big(-, i^*\wedge^k T^*_{X, -}\big)\ra \mathcal Hom_{\Oc_Y}\big(-, \wedge^k T^*_{Y, -}\big)$ giving \eqref{ojppnoioeo}. Evidently, we obtain a commutative diagram:
\[
\xymatrix{
\ar[d]_\cong\Qcl_{T^*_{Y, -}}^{(k)} \ar[r] & i^*\Qcl_{T^*_{X, -}}^{(k)}\ar[d]^{\eqref{ojppnoioeo}}
\\
\mathcal Hom_{\Oc_Y}\big(T^*_{Y, (\pm)^k}, \wedge^k T^*_{Y, -}\big) \ar[r] & \mathcal Hom_{\Oc_Y}\big(i^*T^*_{X, (\pm)^k}, \wedge^k T^*_{Y, -}\big).
}
\]
Upon identifying $\Qcl_{T^*_{Y, -}}^{(k)}$ with $i^*\Qcl_{T^*_{Y, -};T^*_{X, -}}^{(k)}$ as sheaves of $\Oc_Y$-modules we conclude:

\begin{PROP}\label{finoppioibuibriv}
Let $f: (Y, T^*_{Y, -})\subset (X, T^*_{X, -})$ be an embedding of models. For each $k$, the  natural transformation $\mathcal Hom_{\Oc_Y}\big(-, i^*\wedge^kT^*_{X, -}\big)\ra \mathcal Hom_{\Oc_Y}\big(-, \wedge^kT^*_{Y, -}\big)$ induces the following morphisim of short exact sequences:
\begin{align*}
\xymatrix{
 \Qcl_{T^*_{Y, -}}^{(k)}\ar[r] \ar[d]_\cong& i^*\Qcl_{T^*_{X, -}}^{(k)}\ar[r]\ar[d] & i^* \mathcal R^{(k)}_{T^*_{Y, -}; T^*_{X, -}}\ar[d]
\\ 
\mathcal Hom_{\Oc_Y}\big(T^*_{Y, (\pm)^k}, \wedge^k T^*_{Y, -}\big)
\ar[r] & 
\mathcal Hom_{\Oc_Y}\big(i^*T^*_{X, (\pm)^k}, \wedge^k T^*_{Y, -}\big)
\ar[r] & 
\mathcal Hom_{\Oc_Y}\big(\nu^*_{Y/X, (\pm)^k}, \wedge^k T^*_{Y, -}\big)
}
\end{align*}
where the isomorphism $ \Qcl_{T^*_{Y, -}}^{(k)}\cong \mathcal Hom_{\Oc_Y}\big(T^*_{Y, (\pm)^k}, \wedge^k T^*_{Y, -}\big)$ comes from \eqref{mvnfngbjckdkf}. \qed
\end{PROP}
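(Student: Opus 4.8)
The plan is to build the two short exact sequences that form the rows, exhibit the three vertical arrows, check commutativity of the left-hand square, and then read off the right-hand arrow as the induced map on cokernels. The bottom row requires nothing new: since $f$ is a holomorphic embedding of supergeometries, $Y\subset X$ is a submanifold and the kernel $\nu^*_{Y/X,(\pm)^k}$ of $i^*T^*_{X,(\pm)^k}\to T^*_{Y,(\pm)^k}$ from \eqref{rhf784hf984hf8j03j0} is locally free, so the conormal bundle sequence
\[
0\lra \nu^*_{Y/X,(\pm)^k}\lra i^*T^*_{X,(\pm)^k}\lra T^*_{Y,(\pm)^k}\lra0
\]
is short exact; applying the exact contravariant functor $\mathcal Hom_{\Oc_Y}(-,\wedge^kT^*_{Y,-})$ returns the exact bottom row, which is \eqref{nvnbnfjfdkvnvnfjfj}.

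For the top row I would observe that, after the identification $i^*\Qcl^{(k)}_{T^*_{Y,-};T^*_{X,-}}\cong\Qcl^{(k)}_{T^*_{Y,-}}$ noted before the statement (cf.\ Lemma \ref{rojpjfiorjojfpjepofjep}), it is nothing other than the embedded obstruction sequence, i.e.\ the bottom row of \eqref{rrtgiug484848hirgjrioiuii}; this is short exact because it is the image of the ambient obstruction sequence under the exact restriction functor $i^*=r$. The left vertical arrow is the isomorphism of \eqref{mvnfngbjckdkf}; the middle vertical arrow is the map \eqref{ojppnoioeo}, obtained by writing $i^*\Qcl^{(k)}_{T^*_{X,-}}\cong\mathcal Hom_{\Oc_Y}(i^*T^*_{X,(\pm)^k},i^*\wedge^kT^*_{X,-})$ and post-composing with the natural transformation $\mathcal Hom_{\Oc_Y}(-,i^*\wedge^kT^*_{X,-})\to\mathcal Hom_{\Oc_Y}(-,\wedge^kT^*_{Y,-})$ induced by the surjection $i^*\wedge^kT^*_{X,-}\to\wedge^kT^*_{Y,-}$, itself coming from right-exactness of exterior powers applied to $f^\sharp$.

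Commutativity of the left-hand square is precisely the commutative diagram displayed just before the statement, which encodes the compatibility of \eqref{mvnfngbjckdkf} (on $Y$ and on $X$) with \eqref{ojppnoioeo}. Granting this, and using that both rows are short exact, the universal property of cokernels yields a unique third vertical arrow completing the ladder; its source is $i^*\mathcal R^{(k)}_{T^*_{Y,-};T^*_{X,-}}$ and its target $\mathcal Hom_{\Oc_Y}(\nu^*_{Y/X,(\pm)^k},\wedge^kT^*_{Y,-})$ by the two exactness computations above, which would give the asserted morphism of short exact sequences.

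The \emph{genuinely delicate} point is the bookkeeping around \eqref{ojppnoioeo}: one must verify that $i^*$ of a $\mathcal Hom$-sheaf on $X$ is computed as expected, that the natural transformation above is $\Oc_Y$-linear and functorial, and that the map it produces is exactly the one rendering the left square commutative. This has to be carried out uniformly in the parity of $k$, tracking whether $T^*_{(\pm)^k}$ is $T^*$ or $T^*_{-}$ and whether $\nu^*_{Y/X,(\pm)^k}$ is $\Ic_Y/\Ic_Y^2$ or $i^*K_{T^*_{Y,-};T^*_{X,-}}$. Once that compatibility is in place, everything else is formal, using only exactness of $i^*$, exactness of $\mathcal Hom_{\Oc_Y}(-,\wedge^kT^*_{Y,-})$, and the identifications \eqref{mvnfngbjckdkf} and Lemma \ref{rojpjfiorjojfpjepofjep}.
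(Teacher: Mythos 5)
Your proposal is correct and follows essentially the same route the paper takes: you derive the bottom row by applying the exact functor $\mathcal Hom_{\Oc_Y}(-,\wedge^kT^*_{Y,-})$ to the conormal bundle sequence, identify the top row with the restricted (embedded) obstruction sequence from \eqref{rrtgiug484848hirgjrioiuii}, produce the middle vertical arrow via \eqref{ojppnoioeo}, observe commutativity of the left square, and conclude by the universal property of cokernels. This matches the paper's argument step for step, including the use of \eqref{mvnfngbjckdkf} and the identification $i^*\Qcl^{(k)}_{T^*_{Y,-};T^*_{X,-}}\cong\Qcl^{(k)}_{T^*_{Y,-}}$.
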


\subsection{The Ambient Obstruction Sequence}
In Proposition \ref{finoppioibuibriv} we characterized the embedded obstruction sequence as a sequence of sheaves of $\Oc_Y$-modules. We consider here the ambient obstruction sequence which is a sequence of sheaves $\Oc_X$-modules. Our starting point is the normal bundle sequence of $f:(Y, T^*_{Y, -})\subset (X, T^*_{X, -})$ now as sheaves on $X$:
\[
0 
\lra
N_{Y/X,(\pm)^k}^*
\lra
T^*_{X, (\pm)^k}
\lra 
i_*T^*_{Y, (\pm)^k}
\lra
0
\]
where 
\begin{align}
N_{Y/X,(\pm)^k}^*
=
\left\{
\begin{array}{ll}
\Ic_Y&\mbox{$k$ is even};
\\
K_{T^*_{Y, -}; T^*_{X, -}}&\mbox{$k$ is odd}.
\end{array}
\right.
\label{ojrjiofjroifjoifjoe}
\end{align}
Applying $\mathcal Hom_{\Oc_X}\big(-, \wedge^kT^*_{X, -} \big)$ gives
\begin{align}
0
\lra 
\mathcal Hom_{\Oc_X}\big(i_*T^*_{Y, (\pm)^k}, \wedge^kT^*_{X, -} \big)
\lra
\mathcal H&om_{\Oc_X}\big(T^*_{X, (\pm)^k}, \wedge^kT^*_{X, -} \big)
\label{lmrknkjbbfuyiuhfei}
\\
&\lra
\mathcal Hom_{\Oc_X}\big(N_{Y/X,(\pm)^k}^*, \wedge^kT^*_{X, -} \big)
\lra
0
\notag
\end{align}
The relation to the ambient obstruction sequence is as follows.

\begin{PROP}\label{roiniofioejfiojeiojop}
There exists a morphism of exact sequences,
\begin{align*}
\xymatrix{
\mathcal Hom_{\Oc_X}\big(i_*T^*_{Y, (\pm)^k}, \wedge^kT^*_{X, -}\big)\ar[r]\ar[d]_u
& 
\mathcal Hom_{\Oc_X}\big(T^*_{X, (\pm)^k}, \wedge^kT^*_{X, -}\big)\ar[r]\ar[d]^\cong
&
\mathcal Hom_{\Oc_X}\big(N_{Y/X, (\pm)^k}^*, \wedge^kT^*_{X, -}\big) \ar[d]
\\
\Qcl_{T^*_{Y, -}; T^*_{X, -}}^{(k)} \ar[r] &  \Qcl_{T^*_{X, -}}^{(k)} \ar[r] &  \mathcal R^{(k)}_{T^*_{Y, -}; T^*_{X, -}}
}
\end{align*}
where the above isomorphism comes from \eqref{mvnfngbjckdkf}. 
\end{PROP}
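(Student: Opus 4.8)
The plan is to produce the three vertical arrows, check they are well-defined morphisms of $\Oc_X$-modules, and then verify commutativity of the two squares. The middle arrow is forced: it is the isomorphism $\mathcal Hom_{\Oc_X}\big(T^*_{X, (\pm)^k}, \wedge^k T^*_{X, -}\big) \cong \Qcl_{T^*_{X, -}}^{(k)}$ supplied by Green's description \eqref{mvnfngbjckdkf}. So the real content is constructing the left arrow $u$ and the right arrow, and seeing that everything fits. First I would construct the left arrow. The surjection $f^\sharp : T^*_{X,-} \ra i_*T^*_{Y,-}$ induces a surjection $\wedge^\bt T^*_{X,-} \ra \wedge^\bt i_*T^*_{Y,-}$, hence for each $k$ a surjection $\wedge^k T^*_{X,-}\to i_*\wedge^k T^*_{Y,-}$. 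Also, every $\Oc_X$-linear map $i_*T^*_{Y,(\pm)^k}\to \wedge^k T^*_{X,-}$ actually lands in the kernel-annihilating part in the appropriate sense and descends along $f^\sharp$; composing with the projection to $i_*\wedge^k T^*_{Y,-}$ gives a homomorphism $\mathcal Hom_{\Oc_X}\big(i_*T^*_{Y, (\pm)^k}, \wedge^k T^*_{X,-}\big)\to \mathcal Hom_{\Oc_X}\big(i_*T^*_{Y,(\pm)^k}, i_*\wedge^k T^*_{Y,-}\big)$, and the latter is $i_*\Qcl^{(k)}_{T^*_{Y,-}}$ by \eqref{mvnfngbjckdkf} applied on $Y$. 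Now I invoke the Lifting Property, or more cleanly Lemma \ref{rojpjfiorjojfpjepofjep}: the image of the top-left term inside $\Qcl^{(k)}_{T^*_{X,-}}$ (via the middle isomorphism and the top horizontal map) is a subsheaf which pulls back to $\Qcl^{(k)}_{T^*_{Y,-}}$, hence coincides with $\Qcl^{(k)}_{T^*_{Y,-};T^*_{X,-}}$. This both defines $u$ and makes the left square commute essentially by construction.

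Next the right arrow. It is then forced by the commutativity of the left square plus exactness of the two rows: having defined $u$ and the middle isomorphism compatibly, the universal property of cokernels gives a unique map $\mathcal Hom_{\Oc_X}\big(N^*_{Y/X,(\pm)^k}, \wedge^k T^*_{X,-}\big)\to \mathcal R^{(k)}_{T^*_{Y,-};T^*_{X,-}}$ making the right square commute, since $\mathcal R^{(k)}_{T^*_{Y,-};T^*_{X,-}}$ is by definition $\Qcl^{(k)}_{T^*_{X,-}}/\Qcl^{(k)}_{T^*_{Y,-};T^*_{X,-}}$ and the top row of \eqref{lmrknkjbbfuyiuhfei} exhibits the bottom-right sheaf of the top row as the corresponding cokernel on the $\mathcal Hom$ side. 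So the proof reduces to: (a) the top row \eqref{lmrknkjbbfuyiuhfei} is exact — this is already given, being $\mathcal Hom_{\Oc_X}(-,\wedge^k T^*_{X,-})$ applied to the normal bundle sequence, using that $\wedge^k T^*_{X,-}$ need not be used as an injective, but the sequence \eqref{lmrknkjbbfuyiuhfei} was asserted exact in the text; (b) the bottom row is the obstruction sequence, exact by Lemma \ref{rfuefhfiuefeiojoeidep} and the definition of $\mathcal R^{(k)}$; (c) the middle vertical is an isomorphism by \eqref{mvnfngbjckdkf}; (d) $u$ is well-defined with image $\Qcl^{(k)}_{T^*_{Y,-};T^*_{X,-}}$.

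I expect the main obstacle to be step (d), specifically checking that the composite $\mathcal Hom_{\Oc_X}\big(i_*T^*_{Y,(\pm)^k},\wedge^k T^*_{X,-}\big)\to \Qcl^{(k)}_{T^*_{X,-}}$ really does have image inside the subsheaf $\Qcl^{(k)}_{T^*_{Y,-};T^*_{X,-}}$ and that this image is all of it, not a proper subsheaf. The containment direction is the delicate one: one must show that an automorphism of $\wedge^\bt T^*_{X,-}$ of the form $\id + \psi$ with $\psi$ the degree-$k$ derivation corresponding (under \eqref{mvnfngbjckdkf}) to a homomorphism factoring through $T^*_{X,(\pm)^k}\to i_*T^*_{Y,(\pm)^k}$ actually preserves the ideal $\Ic_{T^*_{Y,-};T^*_{X,-}}$. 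This is where one uses that $\psi$ kills $N^*_{Y/X,(\pm)^k}$ — i.e., it kills the conormal directions — so it maps the defining ideal of $Y$ into itself; and the reverse inclusion follows because any element of $\Gc^{(k)}_{T^*_{Y,-};T^*_{X,-}}$ modulo $\Gc^{(k+1)}$ is, by \eqref{mvnfngbjckdkf}, exactly such a homomorphism. Once this is pinned down, the rest is formal diagram-chasing with the snake lemma / cokernel universal property, and I would present it briefly as such rather than in full detail.
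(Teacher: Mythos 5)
Your proposal follows essentially the same architecture as the paper's proof: take the middle vertical to be Green's isomorphism \eqref{mvnfngbjckdkf}, construct the left vertical $u$ by showing that the composite $\mathcal Hom_{\Oc_X}\big(i_*T^*_{Y,(\pm)^k},\wedge^k T^*_{X,-}\big)\to\Qcl^{(k)}_{T^*_{X,-}}$ factors through the subsheaf $\Qcl^{(k)}_{T^*_{Y,-};T^*_{X,-}}$ using pullback along $i$, and obtain the right vertical via the universal property of cokernels. That is exactly the paper's route.

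Two points are worth flagging, however. First, you invoke Lemma \ref{rojpjfiorjojfpjepofjep} as a ``cleaner'' substitute for the Lifting Property, but they are not interchangeable: the Lifting Property needs only that $i^*\phi$ \emph{factors through} the inclusion $\Qcl^{(k)}_{T^*_{Y,-}}\hookrightarrow i^*\Qcl^{(k)}_{T^*_{X,-}}$ (a containment condition), whereas Lemma \ref{rojpjfiorjojfpjepofjep} requires the subsheaf to pull back \emph{equal} to $\Qcl^{(k)}_{T^*_{Y,-}}$. You assert the stronger equality of pullbacks without verifying it; moreover that stronger statement would force $u$ to be surjective, a conclusion the proposition does not make (compare Proposition \ref{kdmlkcnekjkjejnevne}, which reserves the isomorphism statement for \emph{even} embeddings). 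The paper's proof only establishes the containment and then applies the Lifting Property, which is what is actually needed. Second, your one-line claim that every $\Oc_X$-linear map ``lands in the kernel-annihilating part in the appropriate sense and descends along $f^\sharp$'' is precisely the non-obvious step. In the paper this is handled by a nontrivial auxiliary construction: an endomorphism $h$ of $i^*\Qcl^{(k)}_{T^*_{X,-}}$ (justified in a footnote) and a corrected map $\q^{\p}$ with $i^*\q^{\p}$ factoring through $\Qcl^{(k)}_{T^*_{Y,-}}$. You correctly identify this as the delicate point in your final paragraph, and sketch an alternative ``derivation preserving the ideal'' argument, but you don't actually carry either out. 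So the strategy is right, but one lemma is misapplied (and would overclaim), and the genuinely technical factorization step is elided rather than proved.
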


\begin{proof}
The isomorphism $\mathcal Hom_{\Oc_X}\big(T^*_{X, (\pm)^k}, \wedge^kT^*_{X, -}\big)\stackrel{\sim}{\ra} \Qcl_{T^*_{X, -}}^{(k)}$ gives the following composition
\begin{align}
\q:
\mathcal Hom_{\Oc_X}\big(f_*T^*_{Y, (\pm)^k}, \wedge^kT^*_{X, -}\big)
\ra
\mathcal Hom_{\Oc_X}\big(T^*_{X, (\pm)^k}, &\wedge^kT^*_{X, -}\big)
\stackrel{\sim}{\ra} 
\Qcl_{T^*_{X, -}}^{(k)}.
%\notag
%\\
%&\stackrel{\sim}{\ra} 
%\Qcl_{T^*_{X, -}}^{(k)}
%\ra 
%\mathcal R^{(k)}_{T^*_{Y, -}; T^*_{X, -}}.
\label{lfkjgnfndjdklkcjvkfk}
\end{align}
 As the embedding $f$ is holomorphic, the pullback $i^*$ defines an exact functor. Now the map $\q$ is injective and so $i^*\q$ is injective giving,
\begin{align}
0\lra  i^*\mathcal Hom_{\Oc_X}\big(i_*T^*_{Y, (\pm)^k}, \wedge^kT^*_{X, -}\big)
\stackrel{i^*\q^\p}{\lra } i^*\Qcl^{(k)}_{T^*_{X, -}}. 
\label{rhf894h9f8hf09j3f90j3}
\end{align}
Now again by holomorphy of $f$ there exists a natural isomorphism $i^*i_*\cong 1$. Using this and the transformation $\mathcal Hom_{\Oc_Y}\big(-, i^*\wedge^kT^*_{X, -}\big)\ra \mathcal Hom_{\Oc_Y}\big(-, \wedge^kT^*_{Y, -}\big)$ yields,
\begin{align}
i^*\mathcal Hom_{\Oc_X}\big(i_*T^*_{Y, (\pm)^k}, \wedge^kT^*_{X, -}\big) 
&\stackrel{\cong}{\lra}
\mathcal Hom_{\Oc_X}\big(i^*i_*T^*_{Y, (\pm)^k}, i^*\wedge^kT^*_{X, -}\big) 
\label{kfmlkrnvbiubfiuuen}
\\
\notag
&\stackrel{\cong}{\lra}
\mathcal Hom_{\Oc_X}\big(T^*_{Y, (\pm)^k}, i^*\wedge^kT^*_{X, -}\big)
\\
\notag
&\lra
\mathcal Hom_{\Oc_X}\big(T^*_{Y, (\pm)^k}, \wedge^kT^*_{Y, -}\big)
\\
\notag
&\stackrel{\cong}{\lra}
\Qcl_{T^*_{Y, -}}^{(k)}.
\end{align}
Injectivity of $i^*\q$ in \eqref{rhf894h9f8hf09j3f90j3} guarantees a morphism $h: i^*\Qcl^{(k)}_{T^*_{X, -}}\ra f^*\Qcl^{(k)}_{T^*_{X, -}}$ commuting the following diagram,\footnote{That there will exist such a commutative diagram can be seen by considering a more abstract setting. Let $A$ and $B$ be algebras with $A\subset B$. Let $A^\p$ be another algebra and suppose we have morphisms $A\stackrel{g}{\ra} A^\p\ra B$. With $g$ we can define a morphism $h : B\ra B$ commuting with $g$ by setting:
\[
h(b) = \left\{
\begin{array}{ll}
g(b) & \mbox{$b \equiv 0\mod A$}
\\
b & \mbox{otherwise}.
\end{array}
\right.
\]
That $h$ is well-defined homomorphism depends essentially on $A$ being a subalgebra of $B$. This is because the condition $b \equiv 0\mod A$ ensures the existence of a unique $a\in A$ mapping to $b$ and so we can identify $b$ with $a$. 
}
\[
\xymatrix{
\ar[d]i^*\mathcal Hom_{\Oc_X}\big(i_*T^*_{Y, (\pm)^k}, \wedge^kT^*_{X, -}\big)\ar[rrr]^{i^*\q}
& & &
i^*\Qcl^{(k)}_{T^*_{X, -}}\ar@{-->}[d]^h
\\
\Qcl_{T^*_{Y, -}}^{(k)} \ar[rrr] & & &i^*\Qcl^{(k)}_{T^*_{X, -}}
}
\]
Hence the morphism $hi^*\q$ factors through $\Qcl_{T^*_{Y, -}}^{(k)}\ra i^*\Qcl^{(k)}_{T^*_{X, -}}$. By \eqref{kfmlkrnvbiubfiuuen}, note that we can write $hi^*\q = i^*\q^{\p}$ for some morphism $\q^{\p} :  \mathcal Hom_{\Oc_X}\big(i_*T^*_{Y, (\pm)^k}, \wedge^kT^*_{X, -}\big)\ra \Qcl^{(k)}_{T^*_{X, -}}$. Then as we have just seen $i^*\q^{\p}$ factors through $\Qcl_{T^*_{Y, -}}^{(k)}\ra i^*\Qcl^{(k)}_{T^*_{X, -}}$. Therefore, by the lifting property (see \eqref{rjf4hf89h49f88fj30j0}), there will exist a morphism $u$, well defined up to isomorphism, commuting the following,
\begin{align}
\xymatrix{
\mathcal Hom_{\Oc_X}\big(i_*T^*_{Y, (\pm)^k}, \wedge^kT^*_{X, -}\big)\ar@{-->}[d]_u
\ar[drrr]^{\q^{\p}} 
\\
\Qcl_{T^*_{Y, -}; T^*_{X, -}}^{(k)}\ar[rrr] && & \Qcl_{T^*_{X, -}}^{(k)}
}
\label{rhf794gf94f8jf09j30}
\end{align}
To obtain the desired morphism of exact sequences we will need to appeal to the universal property of cokernels. In identifying $\mathcal Hom_{\Oc_X}\big(N_{Y/X, (\pm)^k}^*, \wedge^kT^*_{X, -}\big)$ with the cokernel of $\mathcal Hom_{\Oc_X}\big(T^*_{X, (\pm)^k}, \wedge^kT^*_{X, -}\big)\ra\mathcal Hom_{\Oc_X}\big(N_{Y/X, (\pm)^k}^*, \wedge^kT^*_{X, -}\big)$, the universal property guarantees a morphism $\mathcal Hom_{\Oc_X}\big(N_{Y/X, (\pm)^k}^*, \wedge^kT^*_{X, -}\big)\ra \mathrm{coker}~\q^\p$. Combining this with \eqref{rhf794gf94f8jf09j30} we find the following diagram of morphisms:
\begin{align*}
\xymatrix{
\mathcal Hom_{\Oc_X}\big(i_*T^*_{Y, (\pm)^k}, \wedge^kT^*_{X, -}\big)\ar[r]\ar@{=}[d]
& 
\mathcal Hom_{\Oc_X}\big(T^*_{X, (\pm)^k}, \wedge^kT^*_{X, -}\big)\ar[r]\ar[d]^\cong
&
\mathcal Hom_{\Oc_X}\big(N_{Y/X, (\pm)^k}^*, \wedge^kT^*_{X, -}\big) \ar[d]
\\
\ar[d]^u \mathcal Hom_{\Oc_X}\big(i_*T^*_{Y, (\pm)^k}, \wedge^kT^*_{X, -}\big)\ar[r]^{\q^\p} &  \Qcl_{T^*_{X, -}}^{(k)} \ar@{=}[d] \ar[r] &  \mathrm{coker}~\q^\p\ar[d]
\\
\Qcl_{T^*_{Y, -}; T^*_{X, -}}^{(k)}\ar[r] & \Qcl^{(k)}_{T^*_{X, -}}\ar[r] & \mathcal R^{(k)}_{T^*_{Y, -}; T^*_{X, -}}
}
\end{align*}
The proposition now follows.
\end{proof}

\noindent
In putting Proposition \ref{finoppioibuibriv} and \ref{roiniofioejfiojeiojop} together, we have:

\begin{THM}\label{rhf7hf98fh894f03j903j309}
To an embedding of models $f: (Y, T^*_{Y, -})\subset (X, T^*_{X, -})$ we have the following commutative diagram for each $k$,
\begin{align*}
\xymatrix{
\mathcal Hom_{\Oc_X}\big(i_*T^*_{Y, (\pm)^k}, \wedge^kT^*_{X, -}\big)\ar[r]\ar[d]
& 
\mathcal Hom_{\Oc_X}\big(T^*_{X, (\pm)^k}, \wedge^kT^*_{X, -}\big)\ar[r]\ar[d]_\cong
&
\mathcal Hom_{\Oc_X}\big(N_{Y/X, (\pm)^k}^*, \wedge^kT^*_{X, -}\big) \ar[d]
\\
\Qcl_{T^*_{Y, -}; T^*_{X, -}}^{(k)} \ar[r] \ar[d]&  \Qcl_{T^*_{X, -}}^{(k)} \ar[r] \ar[d]&  \mathcal R^{(k)}_{T^*_{Y, -}; T^*_{X, -}}\ar[d]
\\
\Qcl_{T^*_{Y, -}}^{(k)}\ar[r] \ar[d]_\cong& i^*\Qcl_{T^*_{X, -}}^{(k)}\ar[r]\ar[d] & i^* \mathcal R^{(k)}_{T^*_{Y, -}; T^*_{X, -}}\ar[d]
\\ 
\mathcal Hom_{\Oc_Y}\big(T^*_{Y, (\pm)^k}, \wedge^k T^*_{Y, -}\big)
\ar[r] & 
\mathcal Hom_{\Oc_Y}\big(i^*T^*_{X, (\pm)^k}, \wedge^k T^*_{Y, -}\big)
\ar[r] & 
\mathcal Hom_{\Oc_Y}\big(\nu^*_{Y/X, (\pm)^k}, \wedge^k T^*_{Y, -}\big)
}
\end{align*}
\qed
\end{THM}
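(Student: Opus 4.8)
The plan is to exhibit the four-row array as a vertical concatenation of three commutative two-row diagrams that are already in hand: the diagram of Proposition~\ref{roiniofioejfiojeiojop} (supplying rows one and two), the restriction diagram \eqref{rrtgiug484848hirgjrioiuii} (supplying rows two and three), and the diagram of Proposition~\ref{finoppioibuibriv} (supplying rows three and four). Since a rectangular grid of sheaf morphisms commutes as soon as each of its elementary $2\times 2$ squares does, and every such square of the asserted array lies inside exactly one of these three ingredient diagrams, it suffices to check that the two overlapping rows — row two and row three — are literally the same, objects and horizontal arrows alike, in the two ingredients that share them. So first I would pin down those two matchings, and then the theorem is immediate.

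For row two this is essentially by definition: the bottom row of Proposition~\ref{roiniofioejfiojeiojop} is the ambient obstruction sequence $\Qcl^{(k)}_{T^*_{Y,-};T^*_{X,-}}\to\Qcl^{(k)}_{T^*_{X,-}}\to\mathcal R^{(k)}_{T^*_{Y,-};T^*_{X,-}}$, which is exactly the top row of \eqref{rrtgiug484848hirgjrioiuii}. For row three I would use that the holomorphic embedding $i\colon Y\subset X$ makes $i^{*}$ exact and equal to restriction $(-)|_{Y}$, so the bottom row of \eqref{rrtgiug484848hirgjrioiuii} is $i^{*}$ of the ambient obstruction sequence; by Lemma~\ref{rfuefhfiuefeiojoeidep} the kernel term $i^{*}\Qcl^{(k)}_{T^*_{Y,-};T^*_{X,-}}$ is canonically $\Qcl^{(k)}_{T^*_{Y,-}}$, and it is precisely under this identification that Proposition~\ref{finoppioibuibriv} was phrased, so its top row coincides with the bottom row of \eqref{rrtgiug484848hirgjrioiuii}. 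Granting these two identifications, the assembled $4\times 3$ array commutes square by square, and putting the three diagrams on top of one another gives the statement.

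The step I expect to be the real work is the second identification, i.e.\ verifying that the canonical arrow $\Qcl^{(k)}_{T^*_{Y,-}}\to i^{*}\Qcl^{(k)}_{T^*_{X,-}}$ built into Proposition~\ref{finoppioibuibriv} from Green's presentation \eqref{mvnfngbjckdkf} and the conormal surjection $i^{*}T^{*}_{X,(\pm)^k}\to T^{*}_{Y,(\pm)^k}$ agrees with $i^{*}$ of the subsheaf inclusion $\Qcl^{(k)}_{T^*_{Y,-};T^*_{X,-}}\hookrightarrow\Qcl^{(k)}_{T^*_{X,-}}$ of Lemma~\ref{rfuefhfiuefeiojoeidep}. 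This is where one invokes the characterisation of $\Qcl^{(k)}_{T^*_{Y,-};T^*_{X,-}}$ via the lifting property and Lemma~\ref{rojpjfiorjojfpjepofjep} — as the subsheaf of sections of $\Qcl^{(k)}_{T^*_{X,-}}$ whose pullback lands in $\Qcl^{(k)}_{T^*_{Y,-}}$ — together with the naturality of \eqref{mvnfngbjckdkf} under pullback, which is exactly the commuting square displayed just before Proposition~\ref{finoppioibuibriv}. Beyond that it is bookkeeping: keeping track of which sheaves live over $X$ and which over $Y$, and recording that because the map $u$ of Proposition~\ref{roiniofioejfiojeiojop} is only well defined up to isomorphism the assembled diagram commutes up to isomorphism, exactly as in the two Propositions being combined.
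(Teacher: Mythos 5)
Your proposal is correct and is essentially the paper's argument: the theorem is asserted immediately after Propositions~\ref{roiniofioejfiojeiojop} and~\ref{finoppioibuibriv} by ``putting them together'', with the connecting middle rows furnished by the restriction diagram \eqref{rrtgiug484848hirgjrioiuii} and the identification $i^*\Qcl^{(k)}_{T^*_{Y,-};T^*_{X,-}}\cong\Qcl^{(k)}_{T^*_{Y,-}}$ stated just before Proposition~\ref{finoppioibuibriv}. You have merely made explicit the three-diagram stacking and flagged the compatibility of the two descriptions of $\Qcl^{(k)}_{T^*_{Y,-}}\to i^*\Qcl^{(k)}_{T^*_{X,-}}$, which the paper leaves tacit.
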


\subsection{Even Holomorphic Embeddings} 
The vertical arrows in Proposition \ref{finoppioibuibriv} and \ref{roiniofioejfiojeiojop} need not be injective or surjective in general. It is addressing this point which motivates what we term `even' embeddings.

\begin{DEF}\label{dyrgg4hhf3hh3o3io}
\emph{An embedding of models $f = (i, f^\sharp): (Y, T^*_{Y, -})\subset (X, T^*_{X, -})$ is said to be \emph{even} if the surjection of odd conormal sheaves $f^\sharp: f^*T^*_{X, -} \ra T^*_{Y, -}$ is an isomorphism.}
\end{DEF}

\noindent
The following result concerning the embedded obstruction sequence follows straightforwardly from the definition.

\begin{PROP}\label{rgf784f894hf983hf3j0}
To any even embedding $f:(Y, T^*_{Y, -})\subset (X, T^*_{X, -})$, the vertical morphisms in Proposition $\ref{finoppioibuibriv}$ are isomorphisms. 
\qed
\end{PROP}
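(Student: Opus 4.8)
The plan is to trace through the three vertical arrows in Proposition \ref{finoppioibuibriv} and observe that for an even embedding each one becomes an isomorphism. Recall that those vertical arrows all arise from the natural transformation $\mathcal Hom_{\Oc_Y}\big(-, i^*\wedge^kT^*_{X, -}\big)\ra \mathcal Hom_{\Oc_Y}\big(-, \wedge^kT^*_{Y, -}\big)$, which in turn is induced by the surjection on exterior powers $\wedge^k f^\sharp : i^*\wedge^kT^*_{X, -}\ra \wedge^kT^*_{Y, -}$ coming from $f^\sharp : i^*T^*_{X, -}\ra T^*_{Y, -}$. So the first thing I would note is: if $f$ is even, then $f^\sharp$ is an isomorphism by Definition \ref{dyrgg4hhf3hh3o3io}, hence $\wedge^k f^\sharp$ is an isomorphism of sheaves on $Y$ for every $k$, since taking the $k$-th exterior power is a functor and sends isomorphisms to isomorphisms.

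Next I would push this through the three $\mathcal Hom$-sheaves. For the leftmost arrow, the map $\Qcl_{T^*_{Y, -}}^{(k)}\ra \mathcal Hom_{\Oc_Y}\big(T^*_{Y, (\pm)^k}, \wedge^kT^*_{Y, -}\big)$ is already the isomorphism from \eqref{mvnfngbjckdkf}, so there is nothing to check there. For the middle arrow, $i^*\Qcl_{T^*_{X, -}}^{(k)}\cong \mathcal Hom_{\Oc_Y}\big(i^*T^*_{X, (\pm)^k}, i^*\wedge^kT^*_{X, -}\big)$ by \eqref{ojppnoioeo}, and the arrow is post-composition with $\wedge^k f^\sharp$; since $\wedge^k f^\sharp$ is an isomorphism and $\mathcal Hom_{\Oc_Y}(i^*T^*_{X, (\pm)^k}, -)$ is a functor, post-composition with it is an isomorphism of $\Oc_Y$-modules. (One small point: when $k$ is even, $T^*_{X, (\pm)^k}=T^*_X$ and the relevant identification already used is that of \eqref{ojppnoioeo}; evenness only concerns the odd conormal sheaf, but the target twist $\wedge^kT^*_{X, -}$ versus $\wedge^kT^*_{Y, -}$ is governed by $\wedge^k f^\sharp$ regardless of the parity of $k$, so the argument is uniform in $k$.) For the rightmost arrow, $i^*\mathcal R^{(k)}_{T^*_{Y, -}; T^*_{X, -}}$ is identified with $\mathcal Hom_{\Oc_Y}\big(\nu^*_{Y/X, (\pm)^k}, \wedge^kT^*_{X, -}\big)$ via the same transformation, and again the arrow is post-composition with $\wedge^k f^\sharp$, hence an isomorphism.

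Then I would conclude by the five lemma, or more simply by the two-out-of-three property for morphisms of short exact sequences: in the commutative diagram of Proposition \ref{finoppioibuibriv}, the left vertical map is an isomorphism (it is the displayed isomorphism of \eqref{mvnfngbjckdkf}) and the middle vertical map is an isomorphism by the previous paragraph, so the induced map on cokernels — the right vertical map — is an isomorphism as well. Alternatively, having shown all three target identifications are the functorial post-composition with the single isomorphism $\wedge^k f^\sharp$, one sees directly that each vertical arrow is an isomorphism without invoking any diagram lemma. Either way the Proposition follows.

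I do not anticipate a serious obstacle here; the content is essentially bookkeeping. The one place to be slightly careful is making sure that the vertical maps in Proposition \ref{finoppioibuibriv} really are, up to the canonical identifications, nothing more than post-composition with $\wedge^k f^\sharp$ — in particular that no extra contribution from the conormal-bundle side enters. This is clear from the construction preceding Proposition \ref{finoppioibuibriv}: the conormal sheaf $\nu^*_{Y/X, (\pm)^k}$ sits only in the source slot of the $\mathcal Hom$, and the vertical arrows only ever modify the target slot, via $\wedge^k f^\sharp$. Hence evenness of $f$, which is exactly invertibility of $f^\sharp$ and therefore of $\wedge^k f^\sharp$, is precisely what forces all three verticals to be isomorphisms.
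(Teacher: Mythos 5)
Your argument is correct and is exactly the one the paper leaves implicit (the Proposition is stated with a bare \verb|\qed|, preceded by the remark that it follows straightforwardly from the definition): evenness of $f$ makes $f^\sharp$, hence $\wedge^k f^\sharp$, an isomorphism, so the left and middle verticals are isomorphisms, and the right one follows either by the same post-composition observation or by the snake/five lemma. One minor typographical slip in your write-up: you identify $i^*\mathcal R^{(k)}_{T^*_{Y,-};T^*_{X,-}}$ with $\mathcal Hom_{\Oc_Y}\big(\nu^*_{Y/X,(\pm)^k}, \wedge^kT^*_{X,-}\big)$, where the second argument should be $i^*\wedge^kT^*_{X,-}$; this does not affect the argument, since your fallback to the induced-map-on-cokernels reasoning needs no such identification.
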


\noindent
Regarding the ambient obstruction sequence we have similarly:

\begin{PROP}\label{kdmlkcnekjkjejnevne}
To any even embedding $f:(Y, T^*_{Y, -})\subset (X, T^*_{X, -})$, the vertical morphisms in Proposition $\ref{roiniofioejfiojeiojop}$ are isomorphisms. 
\end{PROP}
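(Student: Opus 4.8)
The plan is to reduce, by the snake lemma, to a vanishing statement about local sections of the obstruction sheaf for the embedding, and then to extract that statement from the evenness hypothesis, with Proposition~\ref{rgf784f894hf983hf3j0} and the four-row diagram of Theorem~\ref{rhf7hf98fh894f03j903j309} organising the bookkeeping. First I would use that in the morphism of short exact sequences of Proposition~\ref{roiniofioejfiojeiojop} the middle vertical arrow is already an isomorphism and both rows are exact. The snake lemma then gives at once that the left vertical arrow, say $u$, is injective, that the right vertical arrow $\mathcal Hom_{\Oc_X}\big(N^*_{Y/X, (\pm)^k}, \wedge^k T^*_{X, -}\big) \ra \mathcal R^{(k)}_{T^*_{Y, -}; T^*_{X, -}}$ is surjective, and that $\ker$ of the latter is identified with $\mathrm{coker}\,u$. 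Hence it suffices to prove that $u$ is surjective, equivalently that the right vertical arrow is injective.

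Next I would unwind the construction of $u$ from the proof of Proposition~\ref{roiniofioejfiojeiojop}: there $u$ is produced, via the Lifting Property (see \eqref{rjf4hf89h49f88fj30j0}), as the isomorphism of $\mathcal Hom_{\Oc_X}\big(i_*T^*_{Y, (\pm)^k}, \wedge^k T^*_{X, -}\big)$ onto the image inside $\Qcl^{(k)}_{T^*_{Y, -}; T^*_{X, -}}$ of the injection $\q^\p \colon \mathcal Hom_{\Oc_X}\big(i_*T^*_{Y, (\pm)^k}, \wedge^k T^*_{X, -}\big) \hookrightarrow \Qcl^{(k)}_{T^*_{X, -}}$. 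Since $\q^\p$ is precomposition with the surjection $T^*_{X, (\pm)^k} \ra i_* T^*_{Y, (\pm)^k}$, under the identification $\Qcl^{(k)}_{T^*_{X, -}} \cong \mathcal Hom_{\Oc_X}\big(T^*_{X, (\pm)^k}, \wedge^k T^*_{X, -}\big)$ of \eqref{mvnfngbjckdkf} its image is exactly the subsheaf of homomorphisms annihilating $N^*_{Y/X, (\pm)^k}$. Thus surjectivity of $u$ is equivalent to the assertion that every local section of $\Qcl^{(k)}_{T^*_{Y, -}; T^*_{X, -}}$, viewed through Lemma~\ref{rfuefhfiuefeiojoeidep} as a homomorphism $T^*_{X, (\pm)^k} \ra \wedge^k T^*_{X, -}$, vanishes on $N^*_{Y/X, (\pm)^k}$; equivalently, by Lemma~\ref{rojpjfiorjojfpjepofjep}, to the assertion that the subsheaf $\mathcal Hom_{\Oc_X}\big(i_*T^*_{Y, (\pm)^k}, \wedge^k T^*_{X, -}\big)$ of $\Qcl^{(k)}_{T^*_{X, -}}$ already pulls back to $\Qcl^{(k)}_{T^*_{Y, -}}$.

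Then I would bring in the evenness hypothesis (Definition~\ref{dyrgg4hhf3hh3o3io}), whose role is to make the relevant normal sheaf explicit. For $k$ odd, the isomorphism $f^\sharp \colon f^*T^*_{X, -} \stackrel{\sim}{\ra} T^*_{Y, -}$ forces $T^*_{X, -}/\Ic_Y T^*_{X, -} \cong i_* T^*_{Y, -}$, whence $N^*_{Y/X, (\pm)^k} = K_{T^*_{Y, -}; T^*_{X, -}} = \ker f^\sharp = \Ic_Y T^*_{X, -}$; the same identity $K_{T^*_{Y, -}; T^*_{X, -}} = \Ic_Y T^*_{X, -}$ also describes the degree-one part of the ideal $\Ic_{T^*_{Y, -}; T^*_{X, -}} = \ker\big(\wedge^\bt T^*_{X, -} \ra \wedge^\bt i_*T^*_{Y, -}\big)$, and for $k$ even one reads $N^*_{Y/X, (\pm)^k}$ off \eqref{ojrjiofjroifjoifjoe}. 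Now a local section of $\Gc^{(k)}_{T^*_{Y, -}; T^*_{X, -}}$ is, on the associated graded, a degree-$k$ derivation $D$ of $\wedge^\bt T^*_{X, -}$ with $D\big(\Ic_{T^*_{Y, -}; T^*_{X, -}}\big) \subseteq \Ic_{T^*_{Y, -}; T^*_{X, -}}$; evaluating $D$ on the degree-one part of this ideal and transporting the conclusion through \eqref{mvnfngbjckdkf} shows that the required vanishing of the associated element of $\mathcal Hom_{\Oc_X}\big(T^*_{X, (\pm)^k}, \wedge^k T^*_{X, -}\big)$ on $N^*_{Y/X, (\pm)^k}$ holds in each parity. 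A tidier way to carry out this comparison is to run it inside the four-row diagram of Theorem~\ref{rhf7hf98fh894f03j903j309}: by Proposition~\ref{rgf784f894hf983hf3j0} the two lower rows there have isomorphic vertical arrows, which pins down the pullback of $\Qcl^{(k)}_{T^*_{Y, -}; T^*_{X, -}}$, and the claim then follows by chasing up the middle and right columns. Combined with the first step, this gives that $u$ and the right vertical arrow of Proposition~\ref{roiniofioejfiojeiojop} are isomorphisms.

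I expect the last step to be the main obstacle. The defining condition for $\Gc^{(k)}_{T^*_{Y, -}; T^*_{X, -}}$ only demands that the ideal $\Ic_{T^*_{Y, -}; T^*_{X, -}}$ be stable, that is $D(\Ic_{T^*_{Y, -}; T^*_{X, -}}) \subseteq \Ic_{T^*_{Y, -}; T^*_{X, -}}$, whereas what is needed is the sharper statement that the induced obstruction-sheaf section \emph{annihilates} the normal sheaf $N^*_{Y/X, (\pm)^k}$; it is precisely the evenness hypothesis, through the identity $K_{T^*_{Y, -}; T^*_{X, -}} = \Ic_Y T^*_{X, -}$ and the resulting degree-one description of $\Ic_{T^*_{Y, -}; T^*_{X, -}}$, that bridges this gap. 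By contrast the snake-lemma reduction and the $(\pm)^k$ case split are routine.
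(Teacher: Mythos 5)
Your reduction in the first two paragraphs is sound and lands on essentially the same pivot as the paper: it suffices to show that the subsheaf $\mathcal Hom_{\Oc_X}\big(i_*T^*_{Y, (\pm)^k}, \wedge^kT^*_{X, -}\big)\subset \Qcl^{(k)}_{T^*_{X,-}}$ pulls back to $\Qcl^{(k)}_{T^*_{Y, -}}$, so that Lemma \ref{rojpjfiorjojfpjepofjep} gives the left vertical isomorphism, and the right one follows from the middle being an isomorphism. (The snake-lemma preamble is harmless but unnecessary: once the left arrow is shown to be an isomorphism via Lemma \ref{rojpjfiorjojfpjepofjep}, the right arrow is automatic.)

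The gap is in your third paragraph, where the pivot claim is supposed to be verified. Neither of the two routes you sketch actually closes it. The derivation-evaluation route is not carried out: you assert that ``evaluating $D$ on the degree-one part of this ideal and transporting the conclusion through \eqref{mvnfngbjckdkf} shows that the required vanishing \ldots holds in each parity,'' but you do not say what $D\big(\Ic_{T^*_{Y,-};T^*_{X,-}}\cap\wedge^1\big)\subset F^{k+1}_{\phantom{k+1}k+1}$ buys you for the associated element of $\mathcal Hom_{\Oc_X}(T^*_{X,(\pm)^k},\wedge^kT^*_{X,-})$, nor how the even case (where $N^*_{Y/X,+}=\Ic_Y$ sits in degree $0$, not degree $1$) is handled. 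The ``tidier'' diagram-chase route also does not work as stated: Proposition \ref{rgf784f894hf983hf3j0} gives isomorphisms between rows $3$ and $4$ of the diagram in Theorem \ref{rhf7hf98fh894f03j903j309}, but the vertical arrows from row $2$ down to row $3$ are restriction maps $r=i^*$ and are not isomorphisms, so one cannot ``chase up'' from the bottom two rows to conclude anything about the map from row $1$ to row $2$. The paper closes the gap by a one-line direct computation that your write-up never makes: evenness gives $i^*\wedge^kT^*_{X,-}\cong\wedge^kT^*_{Y,-}$, whence
\[
i^*\mathcal Hom_{\Oc_X}\big(i_*T^*_{Y,(\pm)^k},\wedge^kT^*_{X,-}\big)
\cong
\mathcal Hom_{\Oc_Y}\big(T^*_{Y,(\pm)^k},\wedge^kT^*_{Y,-}\big)
=\Qcl^{(k)}_{T^*_{Y,-}},
\]
which is exactly the hypothesis of Lemma \ref{rojpjfiorjojfpjepofjep}. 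You should replace the vague step three with this computation.
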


\begin{proof}
We follow the proof of Proposition \ref{roiniofioejfiojeiojop} more closely. Observe that with the assumption $i^*T^*_{X, -}\cong T_{Y, -}$ we have
\begin{align}
i^*\mathcal Hom_{\Oc_X}\big(i_*T^*_{Y, (\pm)^k}, \wedge^kT^*_{X, -}\big)
\cong \Qcl_{T^*_{Y, -}}^{(k)}.  
\label{rhf89hf89f83jf0j30}
\end{align}
Hence for $\q$ the injection in \eqref{lfkjgnfndjdklkcjvkfk} we see that $i^*\q$ will factor through the isomorphism $v$ in \eqref{rhf89hf89f83jf0j30}. We are thus reduced to the hypotheses in Lemma \ref{rojpjfiorjojfpjepofjep} and can therefore conclude $\mathcal Hom_{\Oc_X}\big(i_*T^*_{Y, (\pm)^k}, \wedge^kT^*_{X, -}\big)$ and $ \Qcl_{T^*_{Y, -}; T^*_{X, -}}^{(k)}$ are isomorphic. The proposition now follows.
\end{proof}

\section{Ideal Sheaves}

\noindent
Supermanifolds of a prescribed splitting type were defined in Definition \ref{rfh89hf98f0j30jf03jf903}. This subsequently inspired the definition of holomorphic embeddings in Definition \ref{fj89hf98f93jf9jf39} from whence we eventually deduce Theorem \ref{rhf7hf98fh894f03j903j309}. Presently, we will describe embeddings by reference to sheaves of ideals.

\subsection{Embeddings of Split Models}
Let $f =(i, f^\sharp): (Y, T^*_{Y, -})\subset (X, T^*_{X, -})$ be an embedding of models. We have the surjections $i^\sharp: \Oc_X \ra i_*\Oc_Y$ and $f^\sharp: T^*_{X, -} \ra i_*T_{Y, -}^*$, where $\Oc_X$ (resp. $\Oc_Y$) is the structure sheaf of $X$ (resp. $Y$). Let $\Ic_Y$ and $K_{T^*_{Y, -}; T^*_{X, -}}$ denote the respective kernels. Since $\wedge^\bt$ is a right-exact functor, the surjection $f^\sharp$ gives $\wedge^\bt T^*_{X, -} \ra \wedge^\bt i_*T^*_{Y, -}\ra0$. To describe the kernel, recall that each exterior power $\wedge^m T^*_{X, -}$ will be a filtered $\Oc_X$-module of length $m$. Denote by $F^m_{~\bt} = \{F^m_{~n}\}_{0\leq n\leq m+1}$ the filtration given by:
\[
0 = F^m_{~0}\subset F^m_{~1} \subset \cdots \subset F^m_{~m}\subset F^m_{~m+1} = \wedge^mT^*_{X, -}.
\]
Successive quotients satisfy,
\[
F^m_{~p+1} / F^m_{~p} \cong \wedge^{m - p}K_{T^*_{Y, -};T^*_{X, -}}\otimes \wedge^p i_* T^*_{Y, -}. 
\]
Hence $F^m_{~m} = \ker\{\wedge^m T^*_{X, -}\ra \wedge^m T^*_{Y, -}\}$ for each $m$. Accordingly, we set
\begin{align}
\Ic_{T^*_{Y, -};T^*_{X, -}} := \Ic_Y \oplus \big(\bigoplus_{m\geq 1} F^m_{~m}\big).
\label{fiuiurhfiuhf983h9f}
\end{align}
Then  $\ker \{\wedge^\bt T^*_{X, -}\ra \wedge^\bt i_*T^*_{Y, -}\}$ is isomorphic to $\Ic_{T^*_{Y, -}; T^*_{X, -}}$ as $\Oc_X$-modules. We list below important properties entertained by $\Ic_{T^*_{Y, -};T^*_{X, -}}$:
\begin{enumerate}[(i)]
	\item $\Ic_{T^*_{Y, -};T^*_{X, -}}$ is $\Zbb$-graded with graded pieces:
	\[
	\Ic_{T^*_{Y, -}; T^*_{X, -}}^j =
	\left\{\begin{array}{ll}
	\Ic_Y&\mbox{$j =0$};
	\\
	K_{T^*_{Y, -}; T^*_{X, -}}&\mbox{$j = 1$}
	\\
	F^j_{~j}&\mbox{$j>0$};
	\end{array}
	\right.
	\]
	\item the grading on $\Ic_{T^*_{Y, -}; T^*_{X, -}}^j$ is induced from the grading on $\wedge^\bt T^*_{X, -}$ in the following sense: if $\xi^j : \wedge^\bt T^*_{X, -} \ra \wedge^j T^*_{X, -}$ denotes the projection onto the $j$-th graded piece, then 
	\[
	F^j_{~j} = \mathrm{im}\left\{\Ic_{T^*_{Y, -}; T^*_{X, -}} \hookrightarrow \wedge^\bt T^*_{X, -} \stackrel{\xi^j}{\lra} \wedge^j T^*_{X, -}\right\};
	\]
	\item the quotient $\wedge^\bt T^*_{X, -}/ \Ic_{T^*_{Y, -}; T^*_{X, -}}$ is isomorphic to the sheaf of exterior algebras $\wedge^\bt i_*T^*_{Y, -}$. 
\end{enumerate}
We view $\Ic_{T^*_{Y, -};T^*_{X, -}}$ as the ideal sheaf defining the embedding $e_{(Y, T^*_{Y, -})}\subset  e_{(X, T^*_{X, -})}$, of split models.

\subsection{Embeddings in Split Models}
Based on the observations (i), (ii) and (iii) made earlier, we propose the following general definition of embeddings \emph{in} a split supermanifold $e_{(X, T^*_{X, -})}$.

\begin{DEF}\label{rhf9h49f89f83jf039j}
\emph{To an embedding of models $(Y, T^*_{Y, -})\subset (X, T_{X,-}^*)$, let $\Ic\subset \wedge^\bt T^*_{X, -}$ be a sheaf of ideals satisfying:
\begin{enumerate}[(i)]
	\item $\Ic$ is $\Zbb_2$-graded, with grading inherited from $\wedge^\bt T^*_{X, -}$ in the following sense: let $\xi^\pm : \wedge^\bt T^*_{X, -} \ra \wedge^\pm T^*_{X, -}$ be the projection onto the even and odd graded components.\footnote{as $\Oc_X$-modules we have $\wedge^+T^*_{X, -} = \oplus_{j\geq0} \wedge^{2j}T^*_{X, -}$ and $\wedge^-T^*_{X, -} = \oplus_{j\geq0} \wedge^{2j+1}T^*_{X, -}$.} Note that $\wedge^+T^*_{X, -}\subset \wedge^\bt T^*_{X, -}$ is a commutative subalgebra and $\wedge^-T^*_{X, -}$ is an $\wedge^+T^*_{X, -}$-module. Set, 
	\[
	\Ic^\pm = \mathrm{im}\left\{ \Ic \hookrightarrow \wedge^\bt T^*_{X, -} \stackrel{\xi^\pm}{\lra} \wedge^\pm T^*_{X, -}\right\}.
	\]
	Then as $\wedge^+T^*_{X, -}$-modules we have $\Ic \cong \Ic^+\oplus \Ic^-$. We refer to $\Ic^+$ resp. $\Ic^-$ as the even and odd graded components of $\Ic$;
	\item modulo the fermionic ideal $\Jc_{T^*_{X, -}}^2$,
	\begin{align*}
	\Ic^+ \mod\Jc_{T^*_{X, -}}^2 = \Ic_Y
	&&
	\mbox{and}
	&&
	\Ic^-\mod \Jc_{T^*_{X, -}}^2 = K_{T^*_{Y, -}; T^*_{X, -}}.
	\end{align*}
	\item $\wedge^\bt T^*_{X, -}/\Ic$ and $\wedge^\bt i_*T^*_{Y, -}$ are locally isomorphic.
\end{enumerate}
If $\Ic$ satisfies (i), (ii) and (iii) above then it will be called an \emph{ideal sheaf for an embedding of supermanifolds over $(Y, T^*_{Y, -})\subset (X, T_{X,-}^*)$}.}
\end{DEF}

\noindent
Definition \ref{rhf9h49f89f83jf039j} is made precisely to capture the following correspondence:
\begin{align}
\left\{
\begin{array}{l}
\mbox{Ideal sheaves $\Ic$ for holo--} 
\\
\mbox{morphic embeddings over}\\
\mbox{$(Y, T^*_{Y, -})\subset (X, T_{X,-}^*)$}
\end{array}
\right\}
\Longleftrightarrow
\left\{
\begin{array}{l}
\mbox{Holomorphic embeddings}
\\
\mbox{$(\Zc: \Yfr\subset e_{(X, T_{X,-}^*)})$ for $\Yfr$}
\\
\mbox{modelled on $(Y, T_{Y, -}^*)$}
\end{array}
\right\}
\label{008h489fh94f94fj4}
\end{align}
~\\

\subsection{Splitting of Submanifolds}
Let $\big(\Zc: \Yfr\subset e_{(X, T^*_{X, -})}\big)$ be defined by an ideal sheaf $\Ic$. Then (using \eqref{rhf4hf984j09fjfj30} and the notation in \eqref{ojrjiofjroifjoifjoe}) from Definition \ref{rhf9h49f89f83jf039j}(ii) we have morphisms of exact sequences:
\begin{align}
\xymatrix{
\Ic^\pm \ar[r] \ar[d]& \wedge^\pm T^*_{X, -} \ar[d]\ar[r] & i_*\Oc^\pm_\Yfr\ar[d]
\\
N^*_{Y/X, \pm} \ar[r] & \wedge^\pm T^*_{X, -}/ \Jc_{T^*_{X, -}}^2\ar[r] \ar@/^/@{.>}[u] & i_* \big(\Oc^\pm_\Yfr/\Jc_\Yfr^2\big)
}
\label{rhf894hf984jf90jf093}
\end{align}
A straightforward application of Lemma \ref{rbf8gf87h4f98h389fj0j30} gives:

\begin{LEM}\label{rnuf4f4fj4jfjf0390fj930jf0}
Let $\Ic$ be a sheaf of ideals defining an embedding $\big(\Zc: \Yfr\subset e_{(X, T^*_{X, -})}\big)$. Suppose there exist $\Oc_X$-module morphisms $N^*_{Y/X, \pm} \ra \Ic^\pm$ which commute with the natural inclusions $\wedge^\pm T^*_{X, -}/\Jc^2_{T^*_{X, -}}\ra  \wedge^\pm T^*_{X, -}$ represented by the dotted arrow in \eqref{rhf894hf984jf90jf093}. Then,
\begin{enumerate}[(i)]
	\item $\Yfr$ is split;
	\item $\Ic\cong \Ic_{T^*_{Y, -}; T^*_{X, -}}$. 
\end{enumerate}
\end{LEM}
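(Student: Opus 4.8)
## Proof proposal for Lemma \ref{rnuf4f4fj4jfjf0390fj930jf0}

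The plan is to feed the hypothesised splittings $N^*_{Y/X, \pm} \to \Ic^\pm$ into the criterion of Lemma \ref{rbf8gf87h4f98h389fj0j30}, thereby getting \emph{(i)}, and then to leverage the same splittings to build an explicit isomorphism $\Ic \cong \Ic_{T^*_{Y, -}; T^*_{X, -}}$ for \emph{(ii)}. Concretely, for \emph{(i)} I would first recall from \eqref{rhf4hf984j09fjfj30} that a supermanifold $\Yfr$ is split once both sequences in \eqref{fniooiuheoifhoifoi3joj3o} split. The two sequences we need are $0 \to \Jc_\Yfr \to \Oc_\Yfr \to \Oc_Y \to 0$ and $0 \to \Jc^2_{T^*_{Y,-}} \to \Jc_{T^*_{Y,-}} \to T^*_{Y,-} \to 0$; of these the second is intrinsic to the split model of the supergeometry $(Y, T^*_{Y,-})$ and is split tautologically, so the real content is splitting the first. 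I would obtain this by reading off the top row of \eqref{rhf894hf984jf90jf093} in the odd/even bidegree that computes $\Oc_\Yfr$ (via $\wedge^\bt T^*_{X,-}/\Ic \cong \Oc_\Yfr$ from Definition \ref{rhf9h49f89f83jf039j}(iii)) and observing that the assumed section $N^*_{Y/X, -} \to \Ic^-$, being compatible with the inclusion $\wedge^- T^*_{X,-}/\Jc^2_{T^*_{X,-}} \to \wedge^- T^*_{X,-}$ (the dotted arrow), produces a splitting of the relevant quotient map. Chasing \eqref{rhf894hf984jf90jf093} with the snake lemma converts the pair of $\Oc_X$-module sections $N^*_{Y/X, \pm} \to \Ic^\pm$ into a section of $\Oc_\Yfr \to \Oc_Y$; then Lemma \ref{rbf8gf87h4f98h389fj0j30} applies and gives \emph{(i)}.

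For \emph{(ii)} the idea is that $\Ic_{T^*_{Y, -}; T^*_{X, -}}$, as defined in \eqref{fiuiurhfiuhf983h9f}, is precisely the ideal generated by the images of $N^*_{Y/X, +} = \Ic_Y$ and $N^*_{Y/X, -} = K_{T^*_{Y, -}; T^*_{X, -}}$ inside $\wedge^\bt T^*_{X,-}$ — it is the ``smallest'' candidate. The hypothesised commuting sections $N^*_{Y/X, \pm} \to \Ic^\pm$ give a map from (the ideal generated by) $N^*_{Y/X, \pm}$ into $\Ic$; since by Definition \ref{rhf9h49f89f83jf039j}(ii) the composite $N^*_{Y/X, \pm} \to \Ic^\pm \to \Ic^\pm \bmod \Jc^2_{T^*_{X,-}}$ is the identity on $N^*_{Y/X, \pm}$, this map is injective and realises $\Ic_{T^*_{Y, -}; T^*_{X, -}}$ as a subsheaf of $\Ic$. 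For the reverse inclusion: both $\wedge^\bt T^*_{X,-}/\Ic$ and $\wedge^\bt T^*_{X,-}/\Ic_{T^*_{Y,-};T^*_{X,-}}$ are isomorphic to $\wedge^\bt i_*T^*_{Y,-}$ — the former locally by Definition \ref{rhf9h49f89f83jf039j}(iii), the latter globally by property (iii) of \eqref{fiuiurhfiuhf983h9f}. Since the two quotients have the same length as filtered $\Oc_X$-modules, a graded-pieces comparison (using the grading in Definition \ref{rhf9h49f89f83jf039j}(i) and the graded decomposition of $\Ic_{T^*_{Y,-};T^*_{X,-}}$) forces the inclusion $\Ic_{T^*_{Y,-};T^*_{X,-}} \subset \Ic$ to be an equality, giving \emph{(ii)}.

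The main obstacle I anticipate is the bookkeeping in \emph{(ii)}: verifying that the sections $N^*_{Y/X, \pm} \to \Ic^\pm$ extend to an honest map of ideals (respecting the $\wedge^+T^*_{X,-}$-module and multiplicative structure, not merely the $\Oc_X$-module structure) and that the resulting inclusion of graded pieces is degreewise surjective. The grading compatibility in Definition \ref{rhf9h49f89f83jf039j}(i)--(ii) is stated only modulo $\Jc^2_{T^*_{X,-}}$, so one has to argue inductively up the filtration $F^m_{~\bt}$ that the higher graded pieces $F^m_{~m}$ of $\Ic$ are forced to coincide with those of $\Ic_{T^*_{Y,-};T^*_{X,-}}$ once the degree-$0$ and degree-$1$ pieces agree and the quotient has the prescribed form. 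A secondary subtlety is that Definition \ref{rhf9h49f89f83jf039j}(iii) only asserts a \emph{local} isomorphism $\wedge^\bt T^*_{X,-}/\Ic \cong \wedge^\bt i_*T^*_{Y,-}$; I would need to check that the length/graded-pieces argument is local in nature and hence unaffected, or alternatively that the section in \emph{(i)} upgrades the local isomorphism to a global one before invoking it.
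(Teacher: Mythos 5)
Your treatment of part~\emph{(i)} is in line with the paper: both use Lemma~\ref{rbf8gf87h4f98h389fj0j30} (the paper dismisses this as ``immediate''; you supply the diagram chase).

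For part~\emph{(ii)}, however, your route diverges from the paper's and, as you yourself anticipate, has a gap that you never close. You want to first exhibit $\Ic_{T^*_{Y,-};T^*_{X,-}}$ as a subsheaf of $\Ic$ via the sections $N^*_{Y/X,\pm}\to\Ic^\pm$, and then upgrade inclusion to equality by a ``length/graded-pieces comparison.'' Neither half is sound as stated. First, the sections $F^\pm$ are only $\Oc_X$-module maps; extending them to a map of ideals inside $\wedge^\bt T^*_{X,-}$ requires compatibility with the $\wedge^+T^*_{X,-}$-module and multiplicative structure, which is not automatic from a mod-$\Jc^2_{T^*_{X,-}}$ condition. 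Moreover, the images of $F^\pm$ need not lie in the graded pieces $\Ic_Y$, $K_{T^*_{Y,-};T^*_{X,-}}$ of $\wedge^\bt T^*_{X,-}$ — they are ``diagonal'' elements — so the sub-ideal they generate is not $\Ic_{T^*_{Y,-};T^*_{X,-}}$ as a subsheaf of $\wedge^\bt T^*_{X,-}$; it is merely abstractly isomorphic to something in the right degree range. Second, the comparison argument is false in general: two subsheaves of a locally free sheaf with locally isomorphic quotients need not coincide (think of $\Oc_X\oplus 0$ versus the diagonal inside $\Oc_X\oplus\Oc_X$), and Definition~\ref{rhf9h49f89f83jf039j}\emph{(iii)} only gives a \emph{local} isomorphism, so degreewise rank arguments cannot force equality of subsheaves.

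The paper sidesteps all of this by a single observation you missed: the splitting of $\Yfr$ produced in part~\emph{(i)} is \emph{induced from an automorphism $\psi^\p$ of $\wedge^\bt T^*_{X,-}$}. This is where the embedding data enter — $\Zc$ is classified by a cocycle valued in $\Gc^{(2)}_{T^*_{Y,-};T^*_{X,-}}\subset\mathcal{A}ut\,\wedge^\bt T^*_{X,-}$, so the local identifications defining $\Oc_\Yfr$ are restrictions of ambient automorphisms, and the splitting isomorphism $\psi\colon i_*\Oc_\Yfr\stackrel{\sim}{\to}\wedge^\bt i_*T^*_{Y,-}$ lifts to $\psi^\p\in\Aut\,\wedge^\bt T^*_{X,-}$. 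Once you have $\psi^\p$, you get a morphism of short exact sequences
\[
0\to\Ic\to\wedge^\bt T^*_{X,-}\to i_*\Oc_\Yfr\to0
\qquad\lra\qquad
0\to\Ic_{T^*_{Y,-};T^*_{X,-}}\to\wedge^\bt T^*_{X,-}\to\wedge^\bt i_*T^*_{Y,-}\to0,
\]
with $\psi^\p$ and $\psi$ isomorphisms; the five lemma then forces the induced map $\Ic\to\Ic_{T^*_{Y,-};T^*_{X,-}}$ to be an isomorphism. This avoids any delicate comparison of graded pieces and works globally. Your proposal would need to supply the lifting-to-ambient-automorphism step to be repaired; as written it does not establish~\emph{(ii)}.
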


\begin{proof}
Part \emph{(i)} is immediate. As for \emph{(ii)}, consider that the splitting for $\Yfr$ gives an isomorphism $\psi : i_*\Oc_\Yfr \stackrel{\sim}{\ra} \wedge^\bt i_*T^*_{Y, -}$. This splitting is induced from an automorphism $\psi^\p$ of $\wedge^\bt T^*_{X, -}$. Hence we obtain the following morphism of exact sequences,
\[
\xymatrix{
0\ar[r] & \Ic\ar[d]_{\psi^{\p\p}}\ar[r] & \wedge^\bt T^*_{X, -} \ar[r] \ar[d]_{\psi^\p} & \Oc_\Yfr \ar[d]_{ \psi}\ar[r] & 0
\\
0\ar[r] & \Ic_{T^*_{Y, -}; T^*_{X, -}}\ar[r] &\wedge^\bt T^*_{X, -} \ar[r] & \wedge^\bt i_*T^*_{Y, -}\ar[r] & 0
}
\]
Since $\psi$ and $\psi^\p$ are isomorphisms, so is $\psi^{\p\p}$.
\end{proof}

\subsection{The Maximal Splitting Degree}
From Lemma \ref{rnuf4f4fj4jfjf0390fj930jf0} it is clear that the space $\mathrm{Hom}_{\Oc_X}\big(N_{Y/X, \pm}^*, \Ic^\pm\big)$ is an important invariant of the ideal sheaf $\Ic$. It is however a little too large for our purposes. We consider instead a subset defined as follows. Firstly observe from Definition \ref{rhf9h49f89f83jf039j}(ii) that any morphism in $\mathrm{Hom}_{\Oc_X}\big(N_{Y/X, \pm}^*, \Ic^\pm\big)$ will give a morphism $N_{Y/X, \pm}^*\ra N_{Y/X, \pm}^*$ modulo $\Jc_{T^*_{X, -}}^2$. Set,
\[
\widetilde\Hom\big(N_{Y/X, \pm}^*, \Ic^\pm\big)
:=
\left\{
F^\pm \in \Hom\big(N_{Y/X, \pm}^*, \Ic^\pm\big)
\colon 
F^\pm\mod \Jc_{T^*_{X, -}}^2 =\id_{N_{Y/X, \pm}^*}
\right\}.
\]
To each $F^\pm\in \widetilde\Hom_{\Oc_X}\big(N_{Y/X, \pm}^*, \Ic^\pm\big)$ consider the composition
\[
\xi^j(F^\pm) : N_{Y/X, \pm}^* \stackrel{F^\pm}{\lra} \Ic^\pm \hookrightarrow \wedge^\bt T^*_{X, -} \stackrel{\xi^j}{\lra} \wedge^j T^*_{X, -}
\]
where $\xi^j : \wedge^\bt T^*_{X, -}\ra \wedge^j T^*_{X, -}$ is the projection. We define the `maximal splitting degree' of $F^\pm$ as follows.

\begin{DEF}\label{8r8rfgh8hg8hg9004}
\emph{Let $F^\pm\in  \widetilde{\mathrm{Hom}}_{\Oc_X}\big(N_{Y/X, \pm}^*, \Ic^\pm\big)$. The \emph{maximal splitting degree} of $F^\pm$, denoted $m_{F^\pm}$, is defined to be:
\[
m_{F^\pm} 
:=
\max\big\{ m \colon \xi^{m^\p}(F^\pm) = 0 ~\forall ~2\leq m^\p\leq m\big\}
\]
Note that $m_{F^\pm} \geq 2$. If $m_{F^\pm}$ coinsides with $\mathrm{rank}~T^*_{X, -}$, we will set $m_{F^\pm} = \8$.}
\end{DEF}

\noindent
We can formulate statements about splitting now in terms splitting degrees. 

\begin{LEM}\label{krknjviurvburnvien}
Let $\Ic$ be an ideal sheaf defining an embedding $\big(\mathcal Z: \Yfr\subset e_{(X, T^*_{X, -})}\big)$. Then  $\Yfr$ is split if and only if there exist homomorphisms $F^\pm\in \widetilde{\mathrm{Hom}}_{\Oc_X}\big(N_{Y/X, \pm}^*, \Ic^\pm\big)$ with maximal splitting degree $m_{F^\pm} = \8$. 
\end{LEM}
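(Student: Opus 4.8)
The plan is to prove Lemma \ref{krknjviurvburnvien} by connecting the homomorphisms $F^\pm$ of maximal splitting degree to the splitting criterion already established in Lemma \ref{rnuf4f4fj4jfjf0390fj930jf0}. The key observation is that $m_{F^\pm} = \8$ means $\xi^{m'}(F^\pm) = 0$ for all $m'$ with $2 \leq m' \leq \mathrm{rank}\,T^*_{X, -}$; together with the normalization condition $F^\pm \bmod \Jc_{T^*_{X,-}}^2 = \id_{N^*_{Y/X,\pm}}$ (which controls the degree-$0$ and degree-$1$ parts), this forces $F^\pm$ to land exactly in the graded pieces $\Ic^\pm_0$ and $\Ic^\pm_1$ of $\Ic^\pm$, i.e. $F^\pm$ is a section of $N^*_{Y/X,\pm} \to \Ic^\pm$ compatible with the inclusion $\wedge^\pm T^*_{X,-}/\Jc^2_{T^*_{X,-}} \hookrightarrow \wedge^\pm T^*_{X,-}$ in the sense demanded by Lemma \ref{rnuf4f4fj4jfjf0390fj930jf0}.

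For the forward direction (split $\Ra$ existence of $F^\pm$ with $m_{F^\pm}=\8$), I would use Lemma \ref{rnuf4f4fj4jfjf0390fj930jf0}\emph{(ii)}: if $\Yfr$ is split then $\Ic \cong \Ic_{T^*_{Y,-};T^*_{X,-}}$, and for the model ideal sheaf the graded structure recorded in properties (i), (ii) of the paragraph preceding \eqref{fiuiurhfiuhf983h9f} shows that $N^*_{Y/X,\pm}$ itself sits inside $\Ic_{T^*_{Y,-};T^*_{X,-}}$ as its lowest graded pieces ($\Ic_Y$ in even degree $0$, $K_{T^*_{Y,-};T^*_{X,-}}$ in odd degree $1$). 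Transporting the tautological inclusion across the isomorphism $\psi''$ from the proof of Lemma \ref{rnuf4f4fj4jfjf0390fj930jf0}\emph{(ii)} gives $F^\pm : N^*_{Y/X,\pm} \to \Ic^\pm$ which is the identity modulo $\Jc^2_{T^*_{X,-}}$ and whose image, by construction, has no component in degrees $2, 3, \ldots, \mathrm{rank}\,T^*_{X,-}$, hence $m_{F^\pm} = \8$.

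For the reverse direction (existence of $F^\pm$ with $m_{F^\pm}=\8$ $\Ra$ split), I would argue that $\xi^{m'}(F^\pm) = 0$ for $2 \leq m' \leq \mathrm{rank}\,T^*_{X,-}$ means the composite $N^*_{Y/X,\pm} \to \Ic^\pm \hookrightarrow \wedge^\pm T^*_{X,-}$ has image contained in $\wedge^{\leq 1}$, i.e. in the image of $\wedge^\pm T^*_{X,-}/\Jc^2_{T^*_{X,-}} \hookrightarrow \wedge^\pm T^*_{X,-}$; combined with $F^\pm \equiv \id \bmod \Jc^2$, this is exactly the data of an $\Oc_X$-module map $N^*_{Y/X,\pm} \to \Ic^\pm$ commuting with the dotted arrow in \eqref{rhf894hf984jf90jf093}. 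Lemma \ref{rnuf4f4fj4jfjf0390fj930jf0}\emph{(i)} then yields that $\Yfr$ is split. The main obstacle I anticipate is bookkeeping: carefully matching the ``maximal splitting degree $=\8$'' condition (which is phrased via the projections $\xi^j$ onto exterior powers of the \emph{ambient} bundle $T^*_{X,-}$) with the condition in Lemma \ref{rnuf4f4fj4jfjf0390fj930jf0} (phrased via factoring through $\wedge^\pm T^*_{X,-}/\Jc^2_{T^*_{X,-}}$), and in particular checking that a morphism killing the degrees $2$ through $\mathrm{rank}\,T^*_{X,-}$ necessarily lands in the correct subsheaf and not merely in some larger piece — this requires using that $N^*_{Y/X,\pm}$ lives in degrees $0$ and $1$ to begin with, so that $\xi^j(F^\pm)$ for $j > \mathrm{rank}\,T^*_{X,-}$ vanishes automatically and the stated range of $m'$ is exhaustive.
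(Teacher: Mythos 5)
Your handling of the reverse direction agrees with the paper: the implication "$F^\pm$ with $m_{F^\pm}=\infty$ exist $\Ra$ $\Yfr$ is split" is exactly Lemma~\ref{rnuf4f4fj4jfjf0390fj930jf0}\emph{(i)}, and your bookkeeping paragraph correctly translates "$m_{F^\pm}=\infty$ plus the normalization mod $\Jc^2_{T^*_{X,-}}$" into "factors through the dotted arrow of \eqref{rhf894hf984jf90jf093}", since an $F^\pm$ killed by every $\xi^{m'}$ with $m'\geq 2$ necessarily lands in the degree-$\leq 1$ part, which is exactly the image of $\wedge^\pm T^*_{X,-}/\Jc^2_{T^*_{X,-}}$.

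The forward direction, however, has a genuine logical gap. You cite Lemma~\ref{rnuf4f4fj4jfjf0390fj930jf0}\emph{(ii)} as if it said "split $\Ra \Ic\cong\Ic_{T^*_{Y,-};T^*_{X,-}}$", but that is not what it says: the stated hypothesis of Lemma~\ref{rnuf4f4fj4jfjf0390fj930jf0} is the existence of the morphisms $N^*_{Y/X,\pm}\to\Ic^\pm$ commuting with the dotted arrow --- i.e.\ the existence of the very $F^\pm$ you are trying to produce --- and \emph{both} (i) and (ii) are conclusions of that hypothesis. Invoking (ii) at the outset of the forward direction is therefore circular. (You appeal to the map $\psi''$ from the proof of (ii), but $\psi''$ is built from the ambient automorphism $\psi'$, and the existence of $\psi'$ is exactly the content you would otherwise obtain from the $F^\pm$.)

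The paper's forward argument avoids this. It works directly with diagram \eqref{rhf894hf984jf90jf093}: a splitting of $\Yfr$ gives an inclusion $i_*\big(\Oc^\pm_\Yfr/\Jc^2_\Yfr\big)\to i_*\Oc^\pm_\Yfr$ compatible with the natural (dotted-arrow) inclusion $\wedge^\pm T^*_{X,-}/\Jc^2_{T^*_{X,-}}\to\wedge^\pm T^*_{X,-}$, and then the induced map on kernels of the two horizontal sequences gives $F^\pm:N^*_{Y/X,\pm}\to\Ic^\pm$ landing in degree $\leq 1$, hence $m_{F^\pm}=\infty$. This produces the $F^\pm$ without ever needing $\Ic\cong\Ic_{T^*_{Y,-};T^*_{X,-}}$ as an input. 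If you want to salvage your route, you would need to prove independently that a splitting of $\Yfr$ forces $\Ic\cong\Ic_{T^*_{Y,-};T^*_{X,-}}$ --- but that is more work than the diagram chase, and the cleaner move is to mirror the paper's argument.
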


\begin{proof}
In the converse direction, the existence of $F^\pm$ with $m_{F^{\pm}} =\8$ implying $\Yfr$ is split is a restatement of Lemma \ref{rnuf4f4fj4jfjf0390fj930jf0}. In the other direction, suppose now that $\Yfr$ is split. We will then obtain an inclusion $f_*\big(\Oc^\pm_\Yfr/ \Jc_\Yfr^2\big) \ra \Oc_\Yfr^\pm$ commuting with the natural inclusion $\wedge^\pm T^*_{X, -}/ \Jc_{T^*_{X, -}}^2 \ra \wedge^\pm T^*_{X, -}$. This induces homomorphisms $F^\pm$ with maximal splitting degree $m_{F^\pm} = \8$. 
\end{proof}

\subsection{Ideal Sheaves and Embeddings}
In \eqref{008h489fh94f94fj4} we claimed a correspondence between ideal sheaves and holomorphic embeddings. In this section we clarify this claim. Fix a system of generators $\widehat F$ for $\Ic_{T^*_{Y, -}; T^*_{X, -}}$ with $m_{\widehat F} = \8$. Now consider the set 
\[
\mathscr S^{\geq k}_{T^*_{Y, -}; T^*_{X, -}}(\widehat F)
:=
\big\{
\big(\Ic, F\big)
\colon m_F\geq k\big\}
\]
for $\Ic$ an ideal sheaf for an embedding over $(Y, T^*_{Y, -})\subset (X, T^*_{X, -})$ and $F$ a system of generators for $\Ic$. We consider $\mathscr S^{\geq k}_{T^*_{Y, -}; T^*_{X, -}}(\widehat F)$ as a pointed set with base-point $\big(\Ic_{T^*_{Y, -}; T^*_{X, -}}, \widehat F\big)$. Clearly $\mathscr S^{\geq k+1}_{T^*_{Y, -}; T^*_{X, -}}(\widehat F)\subset \mathscr S^{\geq k}_{T^*_{Y, -}; T^*_{X, -}}(\widehat F)$ is an inclusion of pointed sets. We have:

\begin{THM}\label{lfvrnvnrnvrnvknvlke}
Let $(Y, T^*_{Y, -})\subset (X, T^*_{X, -})$ be an embedding of models. Then there exist maps commuting the following diagram:
\begin{align}
\xymatrix{
\mathscr S^{\geq k+1}_{T^*_{Y, -}; T^*_{X, -}}(\widehat F)
\ar[d]
\ar[rr] & &
\mathscr S^{\geq k}_{T^*_{Y, -}; T^*_{X, -}}(\widehat F)
\ar[d]
\\
\mbox{\emph{\v H}}^1\big(X, \Gc^{(k+1)}_{T^*_{Y, -};T^*_{X,-}}\big)
\ar[rr]
& &
\mbox{\emph{\v H}}^1\big(X, \Gc^{(k)}_{T^*_{Y, -}; T^*_{X,-}}\big)
}
\label{cejiejieeeeoejofpekijeoi}
\end{align}
\end{THM}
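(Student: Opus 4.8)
The strategy is to realize the vertical maps in \eqref{cejiejieeeeoejofpekijeoi} by a \v Cech-cocycle construction and then to show the square commutes by compatibility of that construction with the change-of-degree map on the groups $\Gc^{(\bt)}_{T^*_{Y,-};T^*_{X,-}}$. Concretely, given a pair $\big(\Ic, F\big)\in \mathscr S^{\geq k}_{T^*_{Y,-};T^*_{X,-}}(\widehat F)$, the ideal sheaf $\Ic$ defines an embedding $\big(\Zc_\Ic:\Yfr_\Ic\subset e_{(X,T^*_{X,-})}\big)$ via the correspondence \eqref{008h489fh94f94fj4}; the claim is that when the system of generators $F$ satisfies $m_F\geq k$, this embedding is in fact $(k-1)$-split, i.e.\ its class lands in $\mbox{\v H}^1\big(X,\Gc^{(k)}_{T^*_{Y,-};T^*_{X,-}}\big)$. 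Assigning to $\big(\Ic,F\big)$ that class is the desired vertical map, and since the fixed base-point $\big(\Ic_{T^*_{Y,-};T^*_{X,-}},\widehat F\big)$ has $m_{\widehat F}=\8$, it maps to the base-point of the cohomology set, so the maps are maps of pointed sets.

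\textbf{Key steps, in order.} First I would fix a cover $(U_a)$ of $X$ trivializing everything in sight and, for a pair $\big(\Ic,F\big)$, use property (iii) of Definition \ref{rhf9h49f89f83jf039j} to choose local isomorphisms $\wedge^\bt T^*_{X,-}/\Ic\big|_{U_a}\cong \wedge^\bt i_*T^*_{Y,-}\big|_{U_a}$; lifting these along the surjection $\wedge^\bt T^*_{X,-}\to\wedge^\bt i_*T^*_{Y,-}$ and comparing on overlaps produces a \v Cech $1$-cocycle valued in $\mathcal Aut\wedge^\bt T^*_{X,-}$ whose components preserve $\Ic$, hence land in $\mathcal Aut_{T^*_{Y,-};T^*_{X,-}}$. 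Second, I would verify that the relevant local lifts can be chosen to lie in $\Gc^{(2)}_{\cdots}$ (using property (ii), which pins down the behaviour mod $\Jc^2_{T^*_{X,-}}$), so the cocycle gives a well-defined element of $\mbox{\v H}^1\big(X,\Gc^{(k)}_{T^*_{Y,-};T^*_{X,-}}\big)$ for $k=2$. Third—and this is the heart of the matter—I would show that the hypothesis $m_F\geq k$ forces the cocycle to be cohomologous to one valued in $\Gc^{(k)}_{T^*_{Y,-};T^*_{X,-}}$: the maximal splitting degree measures precisely the lowest $\wedge^j T^*_{X,-}$-component in which the generators $F$ differ from the reference generators $\widehat F$ (which are split to all orders), and by Lemma \ref{krknjviurvburnvien} / Lemma \ref{rnuf4f4fj4jfjf0390fj930jf0} this translates into the transition automorphisms acting trivially on $\Jc^{<k}_{T^*_{X,-}}$ modulo the ideal, i.e.\ lying in $\Gc^{(k)}$. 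Fourth, I would observe that the top horizontal inclusion $\mathscr S^{\geq k+1}\subset \mathscr S^{\geq k}$ simply forgets that $m_F\geq k+1$ and retains $m_F\geq k$, while the bottom map is the canonical one induced by $\Gc^{(k+1)}\subset\Gc^{(k)}$; since both composites send $\big(\Ic,F\big)$ to the class of the \emph{same} cocycle, merely viewed in the larger group, the square commutes.

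\textbf{Main obstacle.} The delicate point is the third step: making rigorous the passage from ``the generators $F$ agree with $\widehat F$ through degree $k-1$'' to ``the gluing cocycle lies in $\Gc^{(k)}_{T^*_{Y,-};T^*_{X,-}}$.'' This requires care because the local isomorphisms in Definition \ref{rhf9h49f89f83jf039j}(iii) are not canonical, so a priori the cocycle is only well-defined up to coboundaries, and one must check that the \emph{coset} in $\mbox{\v H}^1$ is insensitive to these choices and genuinely reflects $m_F$ rather than an artifact of the trivialization. I expect this to go through by choosing the local lifts compatibly with the filtrations $F^m_\bt$ recorded before \eqref{fiuiurhfiuhf983h9f} and tracking degrees as in the proof of Lemma \ref{rfngf874f98h0f3j3f3f}(ii), but it is the step where the bookkeeping is heaviest and where one genuinely uses the structure of $\Ic$ as a graded ideal rather than merely an abstract subsheaf. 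A secondary, more routine check is that the assignment descends to isomorphism classes of embeddings so that it indeed lands in the \v Cech cohomology set rather than in cocycles.
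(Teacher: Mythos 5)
Your high-level plan — \v Cech construction, use $m_F\geq k$ to land in $\Gc^{(k)}$, commutativity by inclusion-compatibility — matches the skeleton of the paper's argument, and you have correctly located where the real work lies. But there are two places where the proposal has genuine gaps, and the paper's proof fills them by a mechanism you did not anticipate.

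First, your step of ``lifting the local quotient isomorphisms along $\wedge^\bt T^*_{X,-}\to\wedge^\bt i_*T^*_{Y,-}$'' is underdetermined: a local isomorphism $\wedge^\bt T^*_{X,-}/\Ic\big|_{U}\cong\wedge^\bt i_*T^*_{Y,-}\big|_{U}$ does not by itself produce an automorphism of $\wedge^\bt T^*_{X,-}\big|_U$ carrying $\Ic\big|_U$ to the reference ideal $\Ic_{T^*_{Y,-};T^*_{X,-}}\big|_U$; one must actually construct such a lift, and Definition~\ref{rhf9h49f89f83jf039j}\emph{(iii)} alone does not hand you one. Second, and more importantly, your ``main obstacle'' — converting $m_F\geq k$ into membership of the resulting $1$-cocycle in $\Gc^{(k)}_{T^*_{Y,-};T^*_{X,-}}$ — is flagged but not actually solved: tracking the filtration $F^m_{~\bt}$ and arguing as in Lemma~\ref{rfngf874f98h0f3j3f3f}\emph{(ii)} is a plausible-sounding plan, but it does not give a construction, only a hope.

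The paper sidesteps both problems simultaneously by building a $0$-cochain $\underline\alpha=\{\alpha_U\}$ \emph{directly in} $\Gc^{(m_F)}_{T^*_{Y,-};T^*_{X,-}}$, so the $1$-cocycle $\{\alpha_U\alpha_V^{-1}\}$ is automatically there. The key tools are (a) the surjection $\Qcl^{(m_F)}_{T^*_{X,-}}\twoheadrightarrow\mathcal Hom_{\Oc_X}\big(N^*_{Y/X,(\pm)^{m_F}},\wedge^{m_F}T^*_{X,-}\big)$, which lets one lift the leading-order defect $h_F$ locally to a class in $\Qcl^{(m_F)}_{T^*_{X,-}}(U)$; (b) the short exact sequence $0\to\Qcl^{(2k+1)}_{T^*_{X,-}}\to\gf^{(2k)}_{T^*_{X,-}}\to\Qcl^{(2k)}_{T^*_{X,-}}\to0$ relating obstruction sheaves to degree-graded derivations, from which one extracts a local derivation $\dt_U$; and (c) the exponential bijection $\gf^{(k)}_{T^*_{X,-}}\cong\Gc^{(k)}_{T^*_{X,-}}$, which turns $\dt_U$ into an automorphism $e^{\dt_U}$ that strictly increases the maximal splitting degree: $m_{e^{-\dt_U}F|_U}\geq m_F+1$. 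Iterating produces, over each $U$, a local automorphism $\alpha_U$ with $\alpha_U(F|_U)=\widehat F|_U$, and imposing this equation makes $\alpha_U$ unique, which resolves your secondary worry about choices of trivialization. This constructive ladder is the content you would need to supply to make your step three rigorous; without it, the degree-bookkeeping you gesture at does not close. The commutativity of the square then is indeed the easy part, as you say, since both composites land on the class of the same $1$-cocycle viewed in nested groups.
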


\begin{proof}
We will construct a map $\mathscr S^{\geq k}_{T^*_{Y, -}; T^*_{X, -}}(\widehat F) \ra \mbox{\v H}^1\big(X, \Gc^{(k)}_{T^*_{X,-}}\big)$ from whence this theorem will follow. We begin with the following observation: to any ideal sheaf $\Ic$ defining an embedding of supermanifolds, note that a choice of generators $F$ for $\Ic$ will give morphisms $F^\pm\in \widetilde{\mathrm{Hom}}_{\Oc_X}\big(N_{Y/X, \pm}^*, \Ic^\pm\big)$. Here $F^+$ resp. $F^-$ are the even and odd components of $F$. Modulo $\Jc_{T^*_{X, -}}^2$, $F$ generates $\Ic_Y$ resp. $K_{T^*_{Y, -}; T^*_{X, -}}$. We set $m_F = \min\{m_{F^+}, m_{F^-}\}$. Thus to generators $F$ of $\Ic$ we have a homomorphism $h_F \in \Hom_{\Oc_X} \big(N_{Y/X, (\pm)^{m_F}} , \wedge^{m_F}T^*_{X, -}\big)$. Note, 
\begin{align}
\big(h_F= 0\big) \iff \big(m_F = \8\big) \iff \big(m_{F^\pm} = \8\big) \iff \big(\Yfr~\mbox{is split}\big),
\label{rhf4gf78hf983jf389fh39hf3}
\end{align}
the latter implication following from  Lemma \ref{krknjviurvburnvien}. Now let $(\Ic, F)\in \mathscr S^{\geq k}_{T^*_{Y, -}; T^*_{X, -}}(\widehat F)$. Then associated to $(\Ic, F)$ is the morphism $h_F\in \Hom_{\Oc_X} \big(N_{Y/X, (\pm)^{m_F}} , \wedge^{m_F}T^*_{X, -}\big)$. Recall that there exists a surjective morphism of sheaves
\[
\Qcl_{T^*_{X, -}}^{(m_F)} \lra \mathcal Hom_{\Oc_X} \big(N_{Y/X, (\pm)^{m_F}} , \wedge^{m_F}T^*_{X, -}\big)
\lra0.
\] 
Hence over each open set $U\subset X$ there will exist some $\nu_U\in \Qcl_{T^*_{X, -}}^{(m_F)}(U)$ mapping to $h_F|_U$. At this stage we recall the following short exact sequence relating (even) derivations and obstruction sheaves from \cite{ONISHCLASS}:
\[
0 
\lra 
\Qcl_{T^*_{X, -}}^{(2k+1)}
\lra 
\mathfrak g^{(2k)}_{T^*_{X, -}}
\lra
\Qcl_{T^*_{X, -}}^{(2k)}
\lra 
0
\]
where $\mathfrak g^{(2k)}_{T^*_{X, -}}$ is the sheaf of derivations of $\wedge^\bt T^*_{X, -}$ sending $\wedge^\ell T^*_{X, -} \ra \wedge^{\ell + 2k} T^*_{X, -}$. We can therefore deduce that over each open set $U$, there will exist a derivation $\dt_U\in \mathfrak g^{(2k)}_{T^*_{X, -}}(U)$ mapping to $h_F|_U$. Now, the sheaf of derivations $\mathfrak g_{T^*_{X, -}}$ is a nilpotent Lie algebra with $\mathfrak g^{(k)}_{T^*_{X, -}}$ its $k$-th graded component. Exponentiating defines a bijection $\mathfrak g^{(k)}_{T^*_{X, -}}\cong \mathcal G^{(k)}_{T^*_{X, -}}$ as sheaves of sets. We can therefore exponentiate the derivation $\dt_U$ to an automorphism $e^{\dt_U}\in \Gc^{(m_F)}_{T^*_{X, -}}(U)$. Now let $F^\p_U = e^{-\dt_U}F|_U$. Then $F^\p_U$ will generate a sheaf of ideals $\Ic^\p(U)$ in $\wedge^\bt T^*_{X, -}(U)$. Since we know $\dt_U \mapsto h_F|_U$ it follows that,
\[
m_{F^\p_U} \geq  m_F + 1. 
\]
Hence over an open set $U\subset X$ we can find an automorphism lifting the maximal splitting degree $m_F$ of $F$. Iterating this procedure, it is clear that we can find local automorphisms $\al_U$ such that $m_{\al_U (F|_U)} = \8$. Uniqueness of $\al_U$ is established by requiring $\al_U(F|_U) = \widehat F|_U$. Observe that $\al_U$ will induce an automorphism of $\wedge^\bt T^*_{Y, -}(U\cap Y)$, so it therefore lies in $\Gc^{(m_F)}_{T^*_{Y, -}; T^*_{X, -}}(U)$. In this way we can thus assign to any $(\Ic, F)\in \mathscr S^{\geq k}_{T^*_{Y, -}; T^*_{X, -}}(\widehat F)$ and covering $\Ufr$ of $X$, a $0$-cochain $\underline\al = \big\{\al_U\big\}_{U\in \Ufr} \in C^0\big(\Ufr, \Gc^{(k)}_{T^*_{Y, -}; T^*_{X, -}}\big)$. The map sending $(\Ic, F)$ to the class of $\big\{\al_U\al_V^{-1}\big\}_{U, V\in \Ufr}$ defines a map of pointed sets $\mathscr S^{\geq k}_{T^*_{Y, -}; T^*_{X, -}}(\widehat F) \ra \mbox{\v H}^1\big(X, \Gc^{(k)}_{T^*_{X,-}}\big)$. 
\end{proof}

\subsection{Relation to Obstruction Classes}
In the diagram of sheaves in Theorem \ref{rhf7hf98fh894f03j903j309}, the rows are short exact sequences. Hence they give long exact sequences on cohomology.  Observe then that we have a map,
\[
\Theta: \mathrm{Hom}_{\Oc_X}\big( N_{Y/X, (\pm)^k}, \wedge^k T^*_{X, -}\big) 
\lra 
H^0\big(X, \mathcal R^{(k)}_{T^*_{Y, -}; T^*_{X, -}}\big)
\lra
%\stackrel{\dt}{\lra}
H^1\big(X, \Qcl_{T^*_{Y, -}; T^*_{X, -}}^{(k)}\big).
\]
Thus to any $(\Ic, F) \in \mathscr S^{\geq k}_{T^*_{Y, -}; T^*_{X, -}}(\widehat F)$ we have $\Theta(\Ic, F) := \Theta(h_F) \in H^1\big(X, \Qcl_{T^*_{Y, -}; T^*_{X, -}}^{(k)}\big)$. Hence a map $\mathscr S^{\geq k}_{T^*_{Y, -}; T^*_{X, -}}(\widehat F)\ra H^1\big(X, \Qcl_{T^*_{Y, -}; T^*_{X, -}}^{(k)}\big)$. Now note from Theorem \ref{lfvrnvnrnvrnvknvlke} that we also have the composition $\mathscr S^{\geq k}_{T^*_{Y, -}; T^*_{X, -}}(\widehat F) \ra \mbox{\v H}^1\big(X, \Gc^{(k)}_{T^*_{X,-}}\big) \stackrel{\om_*}{\ra} H^1\big(X, \Qcl_{T^*_{Y, -}; T^*_{X, -}}^{(k)}\big)$ where $\om_*$ is the map sending $(\mathcal Z: \Yfr\subset \Xfr\big) \mapsto \om(\Zc)$. When the embedding of models is even, we can identify $\Theta$ and $\om_*$ since, by Proposition \ref{kdmlkcnekjkjejnevne}, the map $\mathrm{Hom}_{\Oc_X}\big( N_{Y/X, (\pm)^k}, \wedge^k T^*_{X, -}\big) \ra H^0\big(X, \mathcal R^{(k)}_{T^*_{Y, -}; T^*_{X, -}}\big)$ is an isomorphism. Thus:

\begin{THM}\label{rf8hg984h9g8gj4j0}
Let $(Y, T^*_{Y, -})\subset (X, T^*_{X, -})$ be an even embedding of models. Then the following diagram commutes,
\begin{align}
\xymatrix{
\mathscr S^{\geq k}_{T^*_{Y, -}; T^*_{X, -}}
\ar[d]
\ar[rr]^{\Theta}&& 
H^1\big(X, \Qcl_{T*_{Y, -}; T^*_{X, -}}^{(k)}\big)
\ar@{=}[d]
\\
\mbox{\emph{\v H}}^1\big(X, \Gc^{(k)}_{T^*_{Y, -}; T^*_{X,-}}\big)
\ar[rr]^{\om_*}
&&
H^1\big(X, \Qcl_{T^*_{Y, -}; T^*_{X, -}}^{(k)}\big)
}
\label{cejiejioejokk4fmkrfpekijeoi}
\end{align}\qed
\end{THM}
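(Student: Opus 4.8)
The plan is to make both composites in \eqref{cejiejioejokk4fmkrfpekijeoi} explicit as \v Cech cocycles and to check that they represent the same class in $H^1\big(X, \Qcl^{(k)}_{T^*_{Y, -}; T^*_{X, -}}\big)$. Since the ambient supermanifold here is the split model $e_{(X, T^*_{X, -})}$, which is $k$-split, Proposition \ref{rfb4fhf98f80j09fj930j} places the primary obstruction $\om(\Zc)$ of the embedding $\Zc$ produced by the left vertical map into the kernel of $H^1\big(X, \Qcl^{(k)}_{T^*_{Y, -}; T^*_{X, -}}\big)\ra H^1\big(X, \Qcl^{(k)}_{T^*_{X, -}}\big)$, hence into the image of the connecting homomorphism $\dt_1$ of \eqref{cfgjgjgjgjfjjfhfgbgjf}; and, on the other side, $\Theta(\Ic, F)$ is by construction $\dt_1\big(\overline{h_F}\big)$, where $\overline{h_F}\in H^0\big(X, \mathcal R^{(k)}_{T^*_{Y, -}; T^*_{X, -}}\big)$ is the image of $h_F$ under (the global-sections map induced by) Proposition \ref{roiniofioejfiojeiojop}. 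So the whole statement reduces to the single identity $\om(\Zc) = \dt_1\big(\overline{h_F}\big)$.

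First I would dispose of the case $m_F\geq k+1$: there the local automorphisms $\al_U$ built in the proof of Theorem \ref{lfvrnvnrnvrnvknvlke} lie in $\Gc^{(m_F)}_{T^*_{Y, -}; T^*_{X, -}}\subset \Gc^{(k+1)}_{T^*_{Y, -}; T^*_{X, -}}$, so the cocycle $\{\al_U\al_V^{-1}\}$ already represents a class in $\mbox{\v H}^1\big(X, \Gc^{(k+1)}_{T^*_{Y, -}; T^*_{X, -}}\big)$, whence $\Zc$ is $k$-split and $\om(\Zc) = 0$ by Lemma \ref{rbcigihf9hf93hoijeo}, while the degree-$k$ component of $h_F$ — the only part $\Theta$ detects — vanishes and $\Theta(\Ic, F) = 0$ as well. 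Assuming now $m_F = k$, I would unwind $\om(\Zc)$: following the construction in the proof of Theorem \ref{lfvrnvnrnvrnvknvlke}, each $\al_U$ is a composite of exponentials of derivations of strictly increasing degree, the one of lowest degree being a degree-$k$ derivation $\dt_U\in \mathfrak{g}^{(k)}_{T^*_{X, -}}(U)$ chosen precisely so that its class in $\Qcl^{(k)}_{T^*_{X, -}}(U)$ maps onto $h_F|_U$ under the surjection $\Qcl^{(k)}_{T^*_{X, -}}\ra \mathcal Hom_{\Oc_X}\big(N^*_{Y/X, (\pm)^k}, \wedge^k T^*_{X, -}\big)$. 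Hence $\al_U\equiv e^{-\dt_U}\bmod \Gc^{(k+1)}_{T^*_{X, -}}$, and since the exponential induces the identity on the abelian associated-graded sheaf $\Qcl^{(k)}_{T^*_{X, -}} = \Gc^{(k)}_{T^*_{X, -}}/\Gc^{(k+1)}_{T^*_{X, -}}$, the image of $\{\al_U\al_V^{-1}\}$ under $\Gc^{(k)}_{T^*_{Y, -}; T^*_{X, -}}\ra \Qcl^{(k)}_{T^*_{Y, -}; T^*_{X, -}}$ is the cocycle $\big\{[\dt_V] - [\dt_U]\big\}$, which lands in $\Qcl^{(k)}_{T^*_{Y, -}; T^*_{X, -}}$ because every $\al_U$ lies in $\Gc^{(k)}_{T^*_{Y, -}; T^*_{X, -}}$. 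This is $\om(\Zc)$.

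It then remains to recognise $\big\{[\dt_V] - [\dt_U]\big\}$ as a representative of $\dt_1\big(\overline{h_F}\big)$. By the usual recipe for the connecting homomorphism, $\dt_1\big(\overline{h_F}\big)$ is represented by $\{\mu_V - \mu_U\}$ for any local lifts $\mu_U\in \Qcl^{(k)}_{T^*_{X, -}}(U)$ of $\overline{h_F}|_U$ along $\Qcl^{(k)}_{T^*_{X, -}}\ra \mathcal R^{(k)}_{T^*_{Y, -}; T^*_{X, -}}$. Commutativity of the right-hand square of Proposition \ref{roiniofioejfiojeiojop}, combined with the identification $\Qcl^{(k)}_{T^*_{X, -}}\cong \mathcal Hom_{\Oc_X}\big(T^*_{X, (\pm)^k}, \wedge^k T^*_{X, -}\big)$ of \eqref{mvnfngbjckdkf}, shows that $[\dt_U]$ maps to $\overline{h_F}|_U$ in $\mathcal R^{(k)}_{T^*_{Y, -}; T^*_{X, -}}$; hence $\mu_U := [\dt_U]$ is an admissible lift and the two cocycles literally coincide — one has only to fix the sign conventions in $\dt_1$ and in the boundary map computing $\om(\Zc)$ consistently. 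This gives $\Theta(\Ic, F) = \om(\Zc)$, i.e. commutativity of \eqref{cejiejioejokk4fmkrfpekijeoi}.

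The evenness hypothesis is where the genuine content sits, and via Proposition \ref{kdmlkcnekjkjejnevne} it makes the vertical maps of Proposition \ref{roiniofioejfiojeiojop} isomorphisms, so that $\mathcal R^{(k)}_{T^*_{Y, -}; T^*_{X, -}}\cong \mathcal Hom_{\Oc_X}\big(N^*_{Y/X, (\pm)^k}, \wedge^k T^*_{X, -}\big)$ canonically. This is exactly what is needed both for $\Theta$ to be legitimately read off $h_F$ alone with $\dt_1$ the honest connecting map, and, symmetrically, for the assertion in the proof of Theorem \ref{lfvrnvnrnvrnvknvlke} that each $\al_U$ lands in $\Gc^{(k)}_{T^*_{Y, -}; T^*_{X, -}}$ — so that $\{\al_U\al_V^{-1}\}$ is a cocycle for the correct sheaf and $\om_*$ is defined on it at all. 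Accordingly I expect the main obstacle to be the bookkeeping rather than any calculation: reconciling the ideal-sheaf description of the left vertical map (trivialise the generators $F$ by local automorphisms, as in \eqref{rhf4gf78hf983jf389fh39hf3}) with the conormal-sheaf and connecting-homomorphism description of $\Theta$, keeping the identifications of Theorem \ref{rhf7hf98fh894f03j903j309} and the lifting property \eqref{rjf4hf89h49f88fj30j0} aligned throughout.
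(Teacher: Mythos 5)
Your proof is correct and follows the same route as the paper's: both recognize that evenness, via Proposition~\ref{kdmlkcnekjkjejnevne}, promotes $\mathrm{Hom}_{\Oc_X}\big(N^*_{Y/X, (\pm)^k}, \wedge^k T^*_{X, -}\big)\ra H^0\big(X, \mathcal R^{(k)}_{T^*_{Y, -}; T^*_{X, -}}\big)$ to an isomorphism, and then identify $\Theta$ with $\om_*\circ(\text{left vertical})$. The difference is only one of rigor: where the paper's terse discussion simply asserts that the two maps ``can be identified,'' you actually carry out the identification at the \v Cech-cocycle level --- unwinding the local automorphisms $\al_U$ from the proof of Theorem~\ref{lfvrnvnrnvrnvknvlke}, reducing them modulo $\Gc^{(k+1)}$ to the exponentials $e^{-\dt_U}$, and matching the resulting $1$-cocycle $\{[\dt_V]-[\dt_U]\}$ against the standard representative of $\dt_1\big(\overline{h_F}\big)$ --- which is precisely the content the paper leaves implicit.
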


\section{Applications I: Some Generalities}
\label{rfu4fhfoj4oifi4fmp}

\subsection{General Characterisations}
In Theorem \ref{h89h890309i390} we deduced, for embeddings $\big(\Zc: \Yfr\subset \Xfr\big)$ over $(Y, T^*_{Y, -})\subset (X, T^*_{X, -})$ of splitting type $(k; k, k+1)$, the existence of global sections in $H^0\big(X, \mathcal R^{(k)}_{T^*_{Y, -}; T^*_{X, -}}\big)$ which map to the obstruction classes of $\Zc$ and $\Yfr$ respectively. From Theorem \ref{rf8hg984h9g8gj4j0} we can see what these classes are explicitly if $\Xfr = e_{(X,T^*_{X, -})}$ is the split model and the embedding of models $(Y, T^*_{Y, -})\subset (X, T^*_{X, -})$ is even. They can be derived from the generators of ideal sheaves. Now concerning even embeddings more generally, we have recourse to Proposition \ref{kdmlkcnekjkjejnevne} which justifies studying only the top and bottom rows of the diagram in Theorem \ref{rhf7hf98fh894f03j903j309}, which are appropriately twisted sequences of conormal sheaves associated to the embedding $(Y, T^*_{Y, -})\subset (X, T^*_{X, -})$. We can then partially address the splitting question: \emph{let $\Yfr$ be a supermanifold modelled on $(Y, T^*_{Y, -})$. Is it split?}

\begin{THM}\label{ldfjnvnjfjdkkskskdkd}
Let $(Y, T^*_{Y, -})$ be a model and suppose there exists an even embedding $f : (Y, T^*_{Y, -})\subset (X, T^*_{X, -})$ such that, for all $k$, either:
\begin{enumerate}[(i)]
	\item $\Hom_{\Oc_Y}\big( \nu^*_{Y/ X, (\pm)^k}, \wedge^k T^*_{Y, -}\big) = (0)$ or;
	\item $\Hom_{\Oc_X}\big( N^*_{Y/ X, (\pm)^k}, \wedge^k T^*_{X, -}\big) = (0)$.
\end{enumerate}
Then any supermanifold $\Yfr$ modelled on $(Y, T^*_{Y, -})$ which can be embedded in the split model $e_{(X, T^*_{X, -})}$ will be split.
\end{THM}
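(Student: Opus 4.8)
The plan is to reduce the splitting question for $\Yfr$ to an inductive vanishing statement for its primary obstructions, and to feed those obstructions through the conormal-sheaf description established earlier. Suppose $\Yfr$ is modelled on $(Y, T^*_{Y,-})$ and admits an embedding $\big(\Zc : \Yfr \subset e_{(X, T^*_{X,-})}\big)$. Since $\Yfr$ is some $(k_0 - 1)$-split supermanifold for $k_0 = 2$ to begin with, and $e_{(X, T^*_{X,-})}$ is split (hence $k$-split for all $k$), the embedding automatically has total splitting type $(2; 2, k)$ for all $k$, i.e. it is an embedding of splitting type $(k; k, k+1)$ in the sense of Theorem \ref{h89h890309i390} \emph{provided} $\Yfr$ is already $(k-1)$-split. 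So the argument proceeds by induction on $k$: assuming $\Yfr$ is $(k-1)$-split, I want to show its primary obstruction $\om(\Yfr) \in H^1\big(Y, \Qcl^{(k)}_{T^*_{Y,-}}\big)$ vanishes, whence by Lemma \ref{rhf78rhf74hf98fj3f039} it is $k$-split; since $k$ can be taken up to $\mathrm{rank}\, T^*_{Y,-}$, this yields a full splitting of $\Yfr$.

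The key step is the vanishing of $\om(\Yfr)$ under hypothesis (i) or (ii). First apply Theorem \ref{h89h890309i390}: there is a global section $\vp \in H^0\big(X, \mathcal R^{(k)}_{T^*_{Y,-};T^*_{X,-}}\big)$ with $\dt_2\big(r_*(\vp)\big) = \om(\Yfr)$, where $r_*(\vp) \in H^0\big(Y, \mathcal R^{(k)}_{T^*_{Y,-};T^*_{X,-}}|_Y\big)$. Because $f$ is even, Proposition \ref{rgf784f894hf983hf3j0} identifies $\mathcal R^{(k)}_{T^*_{Y,-};T^*_{X,-}}|_Y$ (equivalently $i^*\mathcal R^{(k)}_{T^*_{Y,-};T^*_{X,-}}$, via the bottom row of Theorem \ref{rhf7hf98fh894f03j903j309}) with $\mathcal Hom_{\Oc_Y}\big(\nu^*_{Y/X,(\pm)^k}, \wedge^k T^*_{Y,-}\big)$, and Proposition \ref{kdmlkcnekjkjejnevne} identifies $\mathcal R^{(k)}_{T^*_{Y,-};T^*_{X,-}}$ itself (the ambient one on $X$) with $\mathcal Hom_{\Oc_X}\big(N^*_{Y/X,(\pm)^k}, \wedge^k T^*_{X,-}\big)$. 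Under hypothesis (i) the sheaf $\mathcal Hom_{\Oc_Y}\big(\nu^*_{Y/X,(\pm)^k}, \wedge^k T^*_{Y,-}\big)$ is zero, so $r_*(\vp) = 0$ in $H^0(Y, -)$ and hence $\om(\Yfr) = \dt_2(0) = 0$. Under hypothesis (ii) the sheaf $\mathcal Hom_{\Oc_X}\big(N^*_{Y/X,(\pm)^k}, \wedge^k T^*_{X,-}\big)$ is zero, so $H^0\big(X, \mathcal R^{(k)}_{T^*_{Y,-};T^*_{X,-}}\big) = (0)$, forcing $\vp = 0$ and again $\om(\Yfr) = \dt_2\big(r_*(0)\big) = 0$. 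Either way the primary obstruction in degree $k$ vanishes and the induction advances.

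I would then close the induction: starting from $\Yfr$ being $1$-split (vacuously, any supermanifold modelled on $(Y, T^*_{Y,-})$ is $1$-split), apply the above at $k = 2, 3, \dots$ up to $n := \mathrm{rank}\, T^*_{Y,-}$; since $\Gc^{(n+1)}_{T^*_{Y,-}}$ is trivial, being $n$-split is the same as being split, so $\Yfr$ is split. A small amount of care is needed to confirm that the hypothesis of Theorem \ref{h89h890309i390} is genuinely met at each stage — that the embedding $\big(\Zc : \Yfr \subset e_{(X,T^*_{X,-})}\big)$, once $\Yfr$ is known to be $(k-1)$-split, really is of splitting type $(k; k, k+1)$, which amounts to checking that $\Zc$ can be taken $(k-1)$-split (it is $(k-1)$-split exactly because $r_*(\Zc) = \Yfr$ is and $u_*(\Zc) = e_{(X,T^*_{X,-})}$ is split, using the commutative diagrams of Section 3.2) and that $e_{(X,T^*_{X,-})}$ is $k$-split (automatic).

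The main obstacle I anticipate is bookkeeping rather than a conceptual leap: one must make sure the two identifications of $\mathcal R^{(k)}_{T^*_{Y,-};T^*_{X,-}}$ — the ambient one on $X$ from Proposition \ref{kdmlkcnekjkjejnevne} and the restricted/pulled-back one on $Y$ from Proposition \ref{rgf784f894hf983hf3j0} — are compatible with the vertical restriction maps appearing in the big diagram of Theorem \ref{rhf7hf98fh894f03j903j309}, so that "$\vp$ lives in a zero group" (case (ii)) and "$r_*\vp$ lands in a zero group" (case (i)) are legitimate conclusions about the \emph{same} section $\vp$ that Theorem \ref{h89h890309i390} produces. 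Once that diagram-chase is pinned down, the rest is the formal induction sketched above.
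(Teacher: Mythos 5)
Your overall strategy is the same as the paper's: invoke Theorem \ref{h89h890309i390} to obtain a global section $\vp$ lying over $\om(\Zc)$ and $\om(\Yfr)$, then use the even-embedding isomorphisms (Propositions \ref{rgf784f894hf983hf3j0} and \ref{kdmlkcnekjkjejnevne}) to identify the relevant $\mathcal R^{(k)}$'s with the twisted conormal $\mathcal Hom$-sheaves, and finish by the vanishing hypothesis. Your explicit induction on $k$ is a healthy elaboration of a proof the paper states very tersely.

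However, the ``small amount of care'' you flag at the end is, in fact, a genuine gap, and your proposed resolution is not correct. You claim that $\Zc$ is $(k-1)$-split ``exactly because $r_*(\Zc) = \Yfr$ is and $u_*(\Zc) = e_{(X,T^*_{X,-})}$ is split, using the commutative diagrams of Section 3.2.'' But those diagrams run the \emph{wrong} way for this conclusion: the natural maps
\[
\mbox{\v H}^1\big(X, \Gc^{(k+1)}_{T^*_{Y,-};T^*_{X,-}}\big)
\lra
\mbox{\v H}^1\big(X, \Gc^{(k)}_{T^*_{Y,-};T^*_{X,-}}\big)
\]
and $r_*, u_*$ only say that if $\Zc$ lifts then so do its images $\Yfr$ and $\Xfr$; the converse is not automatic. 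Concretely, $\om(\Zc)$ lifts $\Zc$ if and only if $\om(\Zc) = 0$ (Lemma \ref{rbcigihf9hf93hoijeo}), and by Proposition \ref{rfb4fhf98f80j09fj930j} we only know $\om(\Zc)$ maps to $\om(\Yfr) = 0$ and $\om(\Xfr) = 0$; this places $\om(\Zc)$ in the intersection of two kernels, which need not vanish. Under hypothesis \emph{(ii)} the issue evaporates, since $H^0\big(X, \mathcal R^{(k)}_{T^*_{Y,-};T^*_{X,-}}\big) = (0)$ forces $\vp = 0$ and hence $\om(\Zc) = \dt_1(\vp) = 0$, so $\Zc$ genuinely lifts and the induction advances. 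Under hypothesis \emph{(i)}, only $r_*(\vp) = 0$ is forced, giving $\om(\Yfr) = 0$ but saying nothing about $\om(\Zc)$; so if only \emph{(i)} holds at some $k$ you cannot conclude the next step's hypothesis on the splitting type of $\Zc$. To honestly close the induction under \emph{(i)} one would need a further argument — for instance, showing that $\Yfr$ being $k$-split together with the existence of a $1$-split embedding into a split model forces the existence of some $(k)$-split embedding, which is an existence question the framework (Theorem \ref{rbfy4gf78h47fh389f3} gives only an \emph{obstruction}, not a constructive statement) does not resolve. It is worth noting that the paper's own proof is so terse it never surfaces this point; your attempt to spell it out is well-aimed, but the justification offered does not hold.
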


\begin{proof}
To prove \emph{(i)}, suppose the hypotheses of the theorem and let $\big(\Zc: \Yfr \subset e_{(X, T^*_{X, -})}\big)$ be an embedding of $\Yfr$ in $e_{(X, T^*_{X, -})}$. By Theorem \ref{h89h890309i390} and Proposition \ref{kdmlkcnekjkjejnevne}, any obstruction to splitting $\Yfr$ will come from $\Hom_{\Oc_X}\big( \nu_{Y/ X, (\pm)^k}, \wedge^k T^*_{Y, -}\big)$, which vanishes by assumption. Hence any obstruction to splitting $\Yfr$ vanishes which means $\Yfr$ must be split. We can deduce part \emph{(ii)} similarly by reference to Proposition \ref{rfb4fhf98f80j09fj930j} and Theorem \ref{h89h890309i390}. 
\end{proof}

\subsection{Split Embeddings of Models}
The notion of splitness has been defined for supermanifolds in Definition \ref{rfh89hf98f0j30jf03jf903} and embeddings of supermanifolds in Definition \ref{rufuirbiuehfuheoifeojfoe}. Presently, Theorem \ref{ldfjnvnjfjdkkskskdkd} motivates the following definition of splitness for embeddings of models. 

\begin{DEF}\label{finioruhf4jfio4jo4joji4}
\emph{An embedding of models $f : (Y, T^*_{Y, -}) \subset(X, T^*_{X, -})$ is said to be \emph{split} if every holomorphic submanifold of the split model $e_{(X, T^*_{X, -})}$ over $f$ is split as a supermanifold.}
\end{DEF}

\noindent
In the above terminology, Theorem \ref{ldfjnvnjfjdkkskskdkd} asserts: an even embedding of models $f : (Y, T^*_{Y, -}) \subset(X, T^*_{X, -})$ is split if either Theorem \ref{ldfjnvnjfjdkkskskdkd}\emph{(i)} or Theorem \ref{ldfjnvnjfjdkkskskdkd}\emph{(ii)} hold for each $k\geq2$. These conditions can be relaxed slightly since what is ultimately of importance is the image of the Hom spaces in Theorem \ref{ldfjnvnjfjdkkskskdkd}\emph{(i)} and \emph{(ii)} in the obstruction spaces $H^1\big(X, \Qcl_{T^*_{Y, -}; T^*_{X, -}}^{(k)}\big)$ and $H^1\big(X, \Qcl_{T^*_{Y, -}}^{(k)}\big)$ respectively. This leads to the following.

\begin{PROP}\label{rfh47fg784f78hf98j30fj30}
Let $f : (Y, T^*_{Y, -}) \subset(X, T^*_{X, -})$  be an even embedding of models and suppose the normal bundle sequence of the embedding of spaces $Y\subset X$ is split. Then $f$ is split.
\end{PROP}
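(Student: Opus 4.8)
The plan is to establish splitness of $f$ in the sense of Definition~\ref{finioruhf4jfio4jo4joji4} by proving that the connecting homomorphism which governs the obstruction to splitting a submanifold of the split model vanishes identically; this is the cohomological counterpart of the argument behind Theorem~\ref{ldfjnvnjfjdkkskskdkd}, where the twisted conormal $\Hom$-sheaves were instead assumed to vanish. So let $\big(\Zc:\Yfr\subset e_{(X,T^*_{X,-})}\big)$ be an arbitrary holomorphic submanifold of $e_{(X,T^*_{X,-})}$ over $f$; I must show $\Yfr$ is split. By Lemma~\ref{rhf78rhf74hf98fj3f039} it suffices, inductively in $k\geq2$, to show that the primary obstruction $\om(\Yfr)\in H^1\big(Y,\Qcl^{(k)}_{T^*_{Y,-}}\big)$ of $\Yfr$ (viewed, at the $k$-th stage, as a $(k-1)$-split supermanifold embedded in the $k$-split model $e_{(X,T^*_{X,-})}$) vanishes. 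Since the split model is $k$-split for every $k$ and is the basepoint of each $\mbox{\v H}^1\big(X,\Gc^{(k)}_{T^*_{X,-}}\big)$, its primary obstruction vanishes in every degree under \eqref{rf74h9f8h80fj039j}. Hence Theorem~\ref{h89h890309i390}, applied to the embedding $\big(\Zc:\Yfr\subset e_{(X,T^*_{X,-})}\big)$ of splitting type $(k;k,k+1)$, produces a section $\vp\in H^0\big(X,\mathcal R^{(k)}_{T^*_{Y,-};T^*_{X,-}}\big)$ with $\dt_2\big(r_*(\vp)\big)=\om(\Yfr)$, where $\dt_2$ is the connecting homomorphism of the embedded obstruction sequence
\[
0\lra \Qcl^{(k)}_{T^*_{Y,-}}\lra i^*\Qcl^{(k)}_{T^*_{X,-}}\lra i^*\mathcal R^{(k)}_{T^*_{Y,-};T^*_{X,-}}\lra0.
\]
It therefore suffices to show this sequence is \emph{split} exact, for then $\dt_2=0$ and $\om(\Yfr)=0$.

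To that end I would invoke Proposition~\ref{finoppioibuibriv} together with Proposition~\ref{rgf784f894hf983hf3j0}: since $f$ is even, the embedded obstruction sequence is isomorphic to the sequence obtained by applying the additive (contravariant, exact) functor $\Hom_{\Oc_Y}\big(-,\wedge^kT^*_{Y,-}\big)$ to the conormal sequence
\[
0\lra\nu^*_{Y/X,(\pm)^k}\lra i^*T^*_{X,(\pm)^k}\lra T^*_{Y,(\pm)^k}\lra0.
\]
An additive functor carries split exact sequences to split exact sequences, so it is enough to check that this conormal sequence splits for each $k$. For $k$ odd, evenness of $f$ makes $f^\sharp\colon i^*T^*_{X,-}\to T^*_{Y,-}$ an isomorphism, whence exactness forces $\nu^*_{Y/X,(\pm)^k}=i^*K_{T^*_{Y,-};T^*_{X,-}}=0$ and the sequence splits trivially. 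For $k$ even the conormal sequence is precisely the (conormal form of the) normal bundle sequence $0\to\Ic_Y/\Ic_Y^2\to i^*T^*_X\to T^*_Y\to0$ of the embedding of spaces $Y\subset X$, which splits by hypothesis. In either case the conormal sequence, hence the embedded obstruction sequence, splits; so $\dt_2=0$ in all degrees, $\om(\Yfr)=0$ in all degrees, and $\Yfr$ is split. As $\Yfr$ was arbitrary, $f$ is split.

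The step I expect to be the main obstacle is the bookkeeping required to invoke Theorem~\ref{h89h890309i390} at every stage $k$: one must know that a $(k-1)$-split $\Yfr$ admitting \emph{some} holomorphic embedding into $e_{(X,T^*_{X,-})}$ over $f$ admits one realised by a $(k-1)$-split class $\Zc\in\mbox{\v H}^1\big(X,\Gc^{(k)}_{T^*_{Y,-};T^*_{X,-}}\big)$, so that the section $\vp$ and the map $\dt_2$ of \eqref{cfgjgjgjgjfjjfhfgbgjf} are defined; this is implicit in the definitions and in the compatibility maps of \eqref{rhf784f794hf8jf03j} and is handled exactly as in the proof of Theorem~\ref{ldfjnvnjfjdkkskskdkd}. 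Alternatively one may run the same argument through Theorem~\ref{tg6fg487fh4hf89j3f03}: from $i_*\om(\Yfr)=r_*\om\big(e_{(X,T^*_{X,-})}\big)=0$ and the injectivity of $i_*$ on $H^1$ forced by the split exactness of the embedded obstruction sequence one again gets $\om(\Yfr)=0$. A further, purely notational, point is to confirm that ``the normal bundle sequence of the embedding of spaces $Y\subset X$'' in the statement is the $k$-even instance of the conormal sequence of Proposition~\ref{finoppioibuibriv}; granting this identification, the whole proof reduces to the single observation that splitness of a short exact sequence is preserved under $\Hom_{\Oc_Y}\big(-,\wedge^kT^*_{Y,-}\big)$ and kills the associated connecting homomorphism.
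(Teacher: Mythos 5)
Your argument is correct and follows essentially the same route as the paper's proof: evenness kills $\nu^*_{Y/X,(\pm)^k}$ in odd degree, split-exactness of the normal bundle sequence (propagated through the exact contravariant $\Hom$-functor) kills the connecting map $\dt_2$ in even degree, and Theorem~\ref{h89h890309i390} together with Proposition~\ref{rgf784f894hf983hf3j0} then forces $\om(\Yfr)=0$ at each stage. The only cosmetic difference is that you phrase the vanishing as ``$\dt_2=0$'' where the paper phrases it as the image of $H^0\big(\mathcal Hom(\nu^*_{Y/X,(\pm)^k},\wedge^kT^*_{Y,-})\big)$ in $H^1\big(\Qcl^{(k)}_{T^*_{Y,-}}\big)$ being zero; these are the same observation.
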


\begin{proof}
Since $f$ is even, 
\[
\nu^*_{Y/X, (\pm)^k}
=
\left\{
\begin{array}{ll}
\Ic_Y/\Ic_Y^2
& 
\mbox{$k$ is even}
\\
(0)
&
\mbox{$k$ is odd}.
\end{array}
\right.
\]
Now as $T^*_{Y, -}$ is locally free, so are its exterior powers. In particular, assuming the normal bundle sequence of $Y\subset X$ splits, the sequence of sheaves on the bottom row of Theorem \ref{rhf7hf98fh894f03j903j309} is split exact for $k$ even. It is split exact when $k$ is odd since $\nu^*_{Y/X,-} = (0)$, as stated above. On cohomology, the image of $\Hom_{\Oc_Y}\big( \nu^*_{Y/ X, (\pm)^k}, \wedge^k T^*_{Y, -}\big)$ in $H^1\big(X, \Qcl_{T^*_{Y, -}}^{(k)}\big)$ vanishes for all $k$. By Theorem \ref{h89h890309i390} and Proposition \ref{rgf784f894hf983hf3j0}, the embedding of models $f$ will be split.
\end{proof}

\begin{EX}\label{9008f9gf73f}
A classical result of Van de Ven in \cite{VDV} states that the normal bundle sequence of any linear subspace $i:Y\subset \Pbb_\Cbb^m$ will be split exact. Hence by Proposition $\ref{rfh47fg784f78hf98j30fj30}$, any embedding of models $f = (i, f^\sharp) : (Y, i^*T^*_{\Pbb_\Cbb^m, -})\subset (\Pbb^m_\Cbb, T^*_{\Pbb^m_\Cbb, -})$ will be split, for $T^*_{\Pbb^m_\Cbb, -}$ any locally free sheaf on $\Pbb^m_\Cbb$.
\end{EX}

\noindent
In a subsequent section We will recover the results of Example \ref{9008f9gf73f} in a particular case without reference to Van de Ven's result.

\subsection{Embeddings in Projective Spaces and Twistings}
We continue here from where we left off in Example \ref{9008f9gf73f}. Let $X = \Pbb^m_\Cbb$ and $Y$ be a holomorphic submanifold. Denote by $i : Y\subset \Pbb_\Cbb^m$ the holomorphic embedding of spaces. For any locally free sheaf $T_{\Pbb_\Cbb^m, -}^*$ we get an even embedding of models $f: (Y, i^*T^*_{\Pbb_\Cbb^m, -})\subset (\Pbb^m, T^*_{\Pbb_\Cbb^m, -})$. Note that this embedding can be `twisted'  by replacing $T^*_{\Pbb_\Cbb^m}$ with $T^*_{\Pbb_\Cbb^m, -}(\ell) := T^*_{\Pbb_\Cbb^m}\otimes \Oc_{\Pbb_\Cbb^m}(\ell)$. We denote by $f^\ell : (Y, i^*T^*_{\Pbb_\Cbb^m, -}(\ell))\subset (\Pbb_\Cbb^m, T^*_{\Pbb^m_\Cbb, -}(\ell))$ the embedding of models obtained by twisting $f$. In the case where $Y = \Pbb^{m^\p}_\Cbb$ is also a projective space for some $m^\p\leq m$, a famous theorem of Serre can be applied to deduce the existence of split embeddings of models.  

\begin{THM}\label{roponovbueib}
Let $f: (\Pbb^{m^\p}_\Cbb, i^*T^*_{\Pbb_\Cbb^m, -})\subset (\Pbb_\Cbb^m, T^*_{\Pbb_\Cbb^m, -})$ be an embedding of models, where $i : \Pbb^{m^\p}_\Cbb\subset \Pbb_\Cbb^m$ is holomorphic. Then there exists some integer $\ell_0$ such that $f^\ell$ is split for all $\ell\leq \ell_0$. 
\end{THM}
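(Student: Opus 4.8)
The plan is to reduce the statement to a vanishing theorem for twisted cohomology and then invoke Serre's Theorem A (the ``Serre vanishing'' for large twists, here applied with $\ell$ \emph{small} rather than large because we are twisting by $\Oc(\ell)$ with negative $\ell$ and the relevant sheaves carry dual twists). First I would recall that $f^\ell$ is even for every $\ell$: twisting by a line bundle does not change the fact that $i^*T^*_{\Pbb^m_\Cbb,-}(\ell) = i^*\big(T^*_{\Pbb^m_\Cbb,-}(\ell)\big)$, so Definition \ref{dyrgg4hhf3hh3o3io} is satisfied. Hence by Proposition \ref{rfh47fg784f78hf98j30fj30} it would in fact suffice to know that the normal bundle sequence of $i : \Pbb^{m^\p}_\Cbb \subset \Pbb^m_\Cbb$ splits; but rather than cite Van de Ven, the point of this theorem is to obtain splitness by the cohomological criterion embodied in Theorem \ref{ldfjnvnjfjdkkskskdkd}. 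So the substantive approach is: show that for $\ell$ sufficiently negative, condition \emph{(i)} of Theorem \ref{ldfjnvnjfjdkkskskdkd} holds for the embedding $f^\ell$ at every $k \geq 2$.

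Next I would make the Hom-sheaves in Theorem \ref{ldfjnvnjfjdkkskskdkd}\emph{(i)} explicit for the twisted embedding. Writing $E = i^*T^*_{\Pbb^m_\Cbb,-}$ for the untwisted odd conormal bundle on $Y = \Pbb^{m^\p}_\Cbb$, the twisted odd conormal bundle is $E(\ell)$, and $\wedge^k\big(E(\ell)\big) \cong \wedge^k E \otimes \Oc_Y(k\ell)$. Since $f^\ell$ is even, $\nu^*_{Y/X,(\pm)^k}$ equals $\Ic_Y/\Ic_Y^2$ when $k$ is even and vanishes when $k$ is odd (as in the proof of Proposition \ref{rfh47fg784f78hf98j30fj30}). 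Thus the only nontrivial constraints come from even $k$, where
\[
\Hom_{\Oc_Y}\big(\Ic_Y/\Ic_Y^2, \wedge^k E \otimes \Oc_Y(k\ell)\big)
\cong
H^0\big(Y, (\Ic_Y/\Ic_Y^2)^\vee \otimes \wedge^k E \otimes \Oc_Y(k\ell)\big).
\]
Now $k$ ranges over the finitely many even integers with $2 \leq k \leq \mathrm{rank}\,E = \mathrm{rank}\,T^*_{\Pbb^m_\Cbb,-}$ (for $k$ larger than the rank, $\wedge^k E = 0$ automatically), so it is a \emph{finite} list of coherent sheaves $\Gc_k := (\Ic_Y/\Ic_Y^2)^\vee \otimes \wedge^k E$ on the projective variety $Y$. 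By Serre's Theorem A there is, for each such $k$, an integer $n_k$ with $H^0\big(Y, \Gc_k \otimes \Oc_Y(j)\big) = 0$ for all $j \leq n_k$ — indeed $\Gc_k \otimes \Oc_Y(j)$ has no global sections once $j$ is sufficiently negative, because a coherent sheaf on a projective variety with negative-enough twist has a resolution forcing $H^0$ to vanish (equivalently, dualize and apply the usual Serre vanishing to $H^{\dim Y}$). Setting $\ell_0 := \min_k \lfloor n_k / k \rfloor$ over the finitely many even $k$ in range, we get $H^0\big(Y, \Gc_k \otimes \Oc_Y(k\ell)\big) = 0$ for all $\ell \leq \ell_0$ and all relevant $k$, so Theorem \ref{ldfjnvnjfjdkkskskdkd}\emph{(i)} applies to $f^\ell$ and $f^\ell$ is split.

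The main obstacle I anticipate is purely bookkeeping rather than conceptual: one must be careful that \emph{all} the relevant Hom-sheaves are captured and that the index $k$ really is bounded, so that taking a minimum over $k$ produces a single $\ell_0$ that works uniformly. The boundedness is guaranteed because $\wedge^k E = 0$ for $k > \mathrm{rank}\,E$ and because the odd-$k$ terms vanish identically by evenness; this is exactly why the hypothesis that $Y$ be a projective space is not strictly needed for this part — only that $Y$ be projective, so that Serre's Theorem A is available. (The projective-space hypothesis on $Y$ is what makes the cleaner statement with Van de Ven's theorem, Example \ref{9008f9gf73f}, also applicable, but the argument above is self-contained.) One should also verify that $\Ic_Y/\Ic_Y^2$ is locally free — which it is, $Y \subset \Pbb^m_\Cbb$ being a smooth subvariety — so that $(\Ic_Y/\Ic_Y^2)^\vee$ makes sense and the Hom-sheaf is the $H^0$ of a genuine coherent (indeed locally free) sheaf. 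With these checks in place the reduction to Serre vanishing is immediate.
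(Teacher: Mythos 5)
Your proposal is correct and follows essentially the same route as the paper's own proof: reduce via evenness of $f^\ell$ to the $k$ even case where $\nu^* = \Ic_Y/\Ic_Y^2$, invoke Serre vanishing of $H^0$ for negative twists (the paper labels the same statement as ``Serre's Theorem B'' following \cite{OSS}, but the content is identical), and take a minimum over the finitely many $k$ bounded by $\mathrm{rank}\,T^*_{\Pbb^m_\Cbb,-}$. Your parenthetical observation that only projectivity of $Y$ (not $Y$ being a projective space) is used here is accurate and slightly sharper than the stated hypotheses.
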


\begin{proof}
This result relies on Serre's Theorem B and boundedness of the exterior algebra as a graded commutative algebra. Recall that Serre's Theorem B, as stated in \cite{OSS}, implies: for any locally free sheaf on $\Fc$ on projective space $\Pbb^j_\Cbb$, there exists some $\ell_0$ such that $H^0(\Pbb_\Cbb^j, \Fc(\ell)) = (0)$ for all $\ell\leq \ell_0$. 
Now with $f^\ell : (\Pbb^{m^\p}_\Cbb, i^*T^*_{\Pbb_\Cbb^m, -}(\ell))\subset (\Pbb_\Cbb^m, T^*_{\Pbb_\Cbb^m, -}(\ell))$ see that for each $k$ the corresponding conormal bundle of $f^\ell$, denoted  $\nu^*_{\Pbb^{m^\p}_\Cbb/\Pbb^m_\Cbb, (\pm)^k}(\ell)$, is:
\[
\nu^*_{\Pbb^{m^\p}_\Cbb/\Pbb^m_\Cbb, (\pm)^k}(\ell)
=
\left\{
\begin{array}{ll}
\Ic_{\Pbb^{m^\p}_\Cbb}/\Ic_{\Pbb^{m^\p}_\Cbb}^2
&
\mbox{$k$ is even};
\\
(0)
&
\mbox{$k$ is odd}.
\end{array}
\right. 
\]
Since the embedding $i$ is holomorphic, $\nu^*_{\Pbb^{m^\p}_\Cbb/\Pbb^m_\Cbb, (\pm)^k}(\ell)$ will be locally free. Now, for any locally free sheaf $\Fc$ on $\Pbb^j_\Cbb$ we have $\wedge^k \big(\Fc\otimes \Oc_{\Pbb^j_\Cbb}(\ell)\big) = \wedge^k \Fc \otimes \Oc_{\Pbb_\Cbb^j}(k\ell)$. Hence for each $k$, 
\[
(f^\ell)^*\mathcal R^{(k)}_{T^*_{\Pbb^{m^\p}_\Cbb, -}(\ell);T^*_{\Pbb^m_\Cbb, -}(\ell)}
\cong
\left\{
\begin{array}{ll}
\mathcal Hom_{\Oc_{\Pbb^{m^\p}_\Cbb}}\big( \Ic_{\Pbb^{m^\p}_\Cbb}/\Ic_{\Pbb^{m^\p}_\Cbb}^2, \wedge^k T^*_{\Pbb^{m^\p}_\Cbb, -}\big)\otimes \Oc_{\Pbb^{m^\p}_\Cbb}(k\ell)
&
\mbox{$k$ is even;}
\\
(0)
&
\mbox{$k$ is odd}
\end{array}
\right.
\]
where $T^*_{\Pbb_\Cbb^{m^\p}, -} = i^*T^*_{\Pbb^m_\Cbb}$. Since $\nu^*_{\Pbb^{m^\p}_\Cbb/\Pbb^m_\Cbb, (\pm)^k}(\ell)$ is locally free, so is $(f^\ell)^*\mathcal R^{(k)}_{T^*_{\Pbb_\Cbb^{m^\p}, -}(\ell);T^*_{\Pbb^m_\Cbb, -}(\ell)}$ and we can apply Serre's Theorem B. It implies  there exists an $\ell(k)$ such that $H^0\big(\Pbb^{m^\p}_\Cbb, (f^\ell)^*\mathcal R^{(k)}_{T^*_{\Pbb^{m^\p}_\Cbb, -}(\ell);T^*_{\Pbb^m_\Cbb, -}(\ell)}\big) = (0)$ for all $\ell \leq \ell(k)$. Since the exterior algebra $\wedge^\bt T^*_{\Pbb^{m^\p}_\Cbb, -}$ is bounded, i.e., $\wedge^k T^*_{\Pbb^{m^\p}_\Cbb, -} = (0)$ for $k < 0$ and $k> \mathrm{rank}~T^*_{\Pbb^{m^\p}_\Cbb, -}$, there are only finitely many such values $\ell(k)$ to consider. Set $\ell_0 = \min\{\ell(k): 2\leq k \leq \mathrm{rank}~T^*_{\Pbb^{m^\p}_\Cbb, -}\}$. By Proposition \ref{kdmlkcnekjkjejnevne} and Theorem \ref{ldfjnvnjfjdkkskskdkd}, $f^\ell$ will be split for all $\ell \leq \ell_0$. 
\end{proof}

\section{Applications II: Projective Varieties}
\label{ofprijiovorvorvoirip}

\subsection{Projective Superspace}
Throughout this article, we have studied supermanifolds by reference to their model. The treatment so far is perhaps a little abstract so we consider more concrete examples presently. Complex affine superspace $\Abb_\Cbb^{m+1|n}$ is the superspace with global function algebra $\Cbb[x_0, \ldots, x_n|\q_1, \ldots, \q_m]$, the polynomial algebra defined by relations $x_ix_j = x_jx_i, \q_a\q_b = -\q_b\q_a$ and $x_i\q_a = \q_ax_i$. Complex superspace $\Cbb^{m+1|n}$ is the split model associated to the model $(\Cbb^{m+1}, \oplus^n\Oc)$, where $\Oc$ is the structure sheaf of $\Cbb^{m+1}$ and $\oplus^n\Oc$ is the $n$-fold direct sum. In the notation in this article, $\Cbb^{m+1|n} = e_{(\Cbb^{m+1}, \oplus^n\Oc)}$. A construction of projective superspace, as one might encounter in the literature, can be found in \cite{YMAN}. It proceeds along lines similar to the construction of projective space. For shorthand set $\Cbb[x|\q] = \Cbb[x_0, \ldots, x_n|\q_1, \ldots, \q_m]$. The multiplicative group $\mathbb G_m = \Cbb^\times$ acts on $\Cbb[x|\q]$ by scaling $x\mapsto \lam x$ and $\q\mapsto \lam\q$ for all $\lam\in \mathbb G_m$. In viewing $(x|\q)$ as a system of global coordinates on $\Cbb^{m+1|n}$ we see that $\mathbb G_m$ will act on $\Cbb^{m+1|n}$, leaving fixed the origin $(0|0)$. The quotient $\big(\Cbb^{m+1|n}- \{(0|0)\}\big)/\mathbb G_m$ is defined to be the projective superspace $\Pbb^{m|n}_\Cbb$. An instructive exercise is to verify the following, details of which we omit here. 

\begin{LEM}\label{rhfuhf8hf983j0fj30}
Projective superspace $\Pbb^{m|n}_\Cbb$ is the split model associated to the model $\big(\Pbb^m_\Cbb, \oplus^n\Oc_{\Pbb^m_\Cbb}(-1)\big)$. \qed
\end{LEM}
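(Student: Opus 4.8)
\noindent\emph{Proof proposal.} The plan is to unwind the $\Gbb_m$-quotient defining $\Pbb^{m|n}_\Cbb$ on the standard affine charts and read off two things: the odd conormal bundle, and the fact that the gluing is literally that of a split model. First I would fix the usual cover $\Pbb^m_\Cbb=\bigcup_i U_i$ with $U_i=\{x_i\neq0\}$, observe that each preimage of $U_i$ in $\Cbb^{m+1|n}-\{(0|0)\}$ under the quotient map $\pi$ is $\Gbb_m$-invariant, and write $\Vfr_i\subset\Pbb^{m|n}_\Cbb$ for the corresponding open subsuperspace. By construction $\Oc_{\Pbb^{m|n}_\Cbb}(\Vfr_i)$ is the algebra of $\Gbb_m$-invariant sections of $\Oc_{\Cbb^{m+1|n}}$ over $\{x_i\neq0\}$, where $\Gbb_m$ acts by $x\mapsto\lambda x$ and $\q\mapsto\lambda\q$.

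Next I would carry out the invariant computation. Over $\{x_i\neq0\}$ the odd coordinate $\q_a$ equals $x_i\cdot(\q_a/x_i)$ with $\q_a/x_i$ a $\Gbb_m$-invariant odd section; and a general section $\sum_I f_I\,\q^I$ (the sum over subsets $I\subseteq\{1,\dots,n\}$, $\q^I=\prod_{a\in I}\q_a$) is invariant precisely when each $f_I$ satisfies $f_I(\lambda x)=\lambda^{-|I|}f_I(x)$, hence is of the form $x_i^{-|I|}g_I$ with $g_I$ pulled back from $U_i$. Thus every invariant section is a polynomial over $\Oc_{\Pbb^m_\Cbb}(U_i)$ in the odd elements $\q_1/x_i,\dots,\q_n/x_i$, which anticommute and square to zero, so
\[
\Oc_{\Pbb^{m|n}_\Cbb}(\Vfr_i)\;\cong\;\wedge^\bt\big(\Oc_{\Pbb^m_\Cbb}(U_i)^{\oplus n}\big),
\]
the exterior algebra on the free $\Oc_{\Pbb^m_\Cbb}(U_i)$-module with basis $\{\q_a/x_i\}_{a=1}^n$. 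In particular the reduced space is $\Pbb^m_\Cbb$ and $\Oc_{\Pbb^{m|n}_\Cbb}$ is locally the exterior algebra of a rank-$n$ free module, so $\Pbb^{m|n}_\Cbb$ is a complex supermanifold, and by \eqref{rhf4hf984j09fjfj30} its odd conormal sheaf is the locally free sheaf with local frames the $\q_a/x_i$.

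Then I would compute transition data on $U_i\cap U_j$. The even coordinates obey the usual $\Pbb^m_\Cbb$ rules, while
\[
\q_a/x_i \;=\; (x_j/x_i)\,(\q_a/x_j),\qquad 1\leq a\leq n,
\]
the same scalar $x_j/x_i$ for every $a$, and with no nonlinear or even--odd cross terms. These are exactly the transition functions of $\Oc_{\Pbb^m_\Cbb}(-1)^{\oplus n}$: the tautological subbundle $\Oc_{\Pbb^m_\Cbb}(-1)\subset\Pbb^m_\Cbb\times\Cbb^{m+1}$ has local frame $x/x_i$ over $U_i$ with $x/x_i=(x_j/x_i)(x/x_j)$, and a frame of $\Oc_{\Pbb^m_\Cbb}(d)$ transforms by $(x_i/x_j)^d$, which here forces $d=-1$. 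Hence $\Pbb^{m|n}_\Cbb$ is modelled on $\big(\Pbb^m_\Cbb,\oplus^n\Oc_{\Pbb^m_\Cbb}(-1)\big)$; keeping this sign of the twist straight is essentially the only step that demands care.

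Finally, to identify $\Pbb^{m|n}_\Cbb$ with the split model, I would note that the chart isomorphisms above carry $\q_a/x_i$ to the standard frame of the $a$-th copy of $\Oc_{\Pbb^m_\Cbb}(-1)$ over $U_i$, and the displayed transition rule is exactly the one those frames satisfy inside $\wedge^\bt\big(\oplus^n\Oc_{\Pbb^m_\Cbb}(-1)\big)$. Therefore the chart isomorphisms patch to a global isomorphism of sheaves of supercommutative algebras $\Oc_{\Pbb^{m|n}_\Cbb}\cong\wedge^\bt\big(\oplus^n\Oc_{\Pbb^m_\Cbb}(-1)\big)$; equivalently, the class of $\Pbb^{m|n}_\Cbb$ in $\mbox{\v H}^1\big(\Pbb^m_\Cbb,\Gc^{(2)}_{\oplus^n\Oc_{\Pbb^m_\Cbb}(-1)}\big)$ is the basepoint. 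By Definition \ref{dhfh3hf9h398f83jf03j3f33} this is precisely the statement $\Pbb^{m|n}_\Cbb = e_{(\Pbb^m_\Cbb,\,\oplus^n\Oc_{\Pbb^m_\Cbb}(-1))}$. Beyond tracking the twist, the only genuinely delicate points are formal: checking that $\big(\Cbb^{m+1|n}-\{(0|0)\}\big)/\Gbb_m$ is well defined as a complex superspace, and that the invariant computation on each $\Vfr_i$ is exhaustive --- both go through as in the classical even case, since $\Gbb_m$ acts on the odd coordinates through the same character as on the even ones.
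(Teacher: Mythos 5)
Your argument is correct, and since the paper deliberately omits this proof (calling it ``an instructive exercise''), there is no internal proof to compare against; your chart-by-chart invariant computation, the identification of the transition rule $\q_a/x_i=(x_j/x_i)(\q_a/x_j)$ with that of $\Oc_{\Pbb^m_\Cbb}(-1)^{\oplus n}$, and the observation that the chart isomorphisms patch without nilpotent corrections (so the class is the basepoint of $\mbox{\v H}^1\big(\Pbb^m_\Cbb,\Gc^{(2)}_{\oplus^n\Oc_{\Pbb^m_\Cbb}(-1)}\big)$) is precisely the expected argument.
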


\subsection{Degree-$d$ Subvarieties}
Subvarieties of affine superspace $\Abb_\Cbb^{m+1|n}$ are described by prime ideals $\Ic$ in $\Cbb[x|\q]$. Those prime ideals which are homogeneous define subvarieties of $\Pbb_\Cbb^{m|n}$. Consider a homogeneous, prime ideal $\Ic\subset \Cbb[x|\q]$ generated by polynomials $\{P^\al(x|\q)\}_{\al\in I}$, where $I$ is a finite index set and
\begin{align*}
P^\al(x|\q)
&=
\sum_{|\mu|>0}P^{\al|\mu}(x)\q_\mu
\\
&=
P^{\al|0}(x) + P^{\al|a}(x)\q_a +  P^{\al|ij}(x)\q_i\q_j + \ldots
\end{align*}
where $\mu$ is a multi-index; $|\mu|$ is its length; and $P^{\al|\mu}(x)$ are polynomials in $x$ and all the free indices are implicitly summed. The set $\{P^{\al|0}(x)\}_{\al\in I}$ generates a homogeneous ideal in $\Cbb[x]$ and so defines a subvariety $V$ of $\Pbb^m_\Cbb$. Since the even and odd coordinates $x$ and $\q$ have the same degree under the scaling action of $\mathbb G_m = \Cbb^\times$, we see that $P^\al(\lam x|\lam \q) = \lam^dP^\al(x|\q)$ if and only if 
\begin{align}
\deg P^{\al|\mu}(x) = d - |\mu|
\label{rfbf8h3hf983j03j}
\end{align}
for all $\mu$. The subvariety $\Vc\subset \Pbb^{m|n}_k$ defined by $\Ic$ has degree $d$ if and only if $\Ic$ is generated by homogeneous polynomials $\{P^\al(x|\q)\}_{\al\in I}$ whose coefficients satisfy $\eqref{rfbf8h3hf983j03j}$ for all $\al, \mu$. Some general properties to observe are:
\begin{enumerate}[$(i)$]
	\item $\Vc_\red =V$;
	\item the odd conormal sheaf $T^*_{V, -}$ is defined as the cokernel of the syzygy generated by $\big(\sum P^{\al|a}(x)\q_a\big)_{\al\in I}$ in $\oplus^n \Oc_{\Pbb^m_\Cbb}(-1)$.
\end{enumerate}
The odd conormal sheaf of the embedding $(V, T_{V, -}^*)\subset (\Pbb^{m}_\Cbb, \oplus^n\Oc_{\Pbb^m_\Cbb}(-1))$ of models is generated by the relation $\sum P^{\al|a}(x)\q_a$ for all $\al\in I$. Hence if $P^{\al|a}(x) = 0$ for all $\al$ and $a$, this embedding of models will be even. And in this case $T_{V, -}^* = i^*\oplus^n\Oc_{\Pbb^m_\Cbb}(-1)$, for $i: V\subset \Pbb^m_\Cbb$ the holomorphic embedding of reduced spaces.

\begin{PROP}\label{rfh98fh98jf83j0j30}
Any linear subvariety $\Vc\subset \Pbb_\Cbb^{m|n}$ is split.
\end{PROP}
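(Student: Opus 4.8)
The plan is to show that, after a parity-preserving linear change of the homogeneous coordinates $(x|\q)$ on $\Cbb^{m+1|n}$, a linear subvariety becomes a standard coordinate projective subsuperspace $\Pbb^{m'|n'}_\Cbb\subset\Pbb^{m|n}_\Cbb$; the latter is a split model by Lemma \ref{rhfuhf8hf983j0fj30}, hence a split supermanifold, and since splitness is invariant under isomorphism of supermanifolds the proposition follows.

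First I would unwind the structure of the defining ideal. A linear subvariety $\Vc$ has degree $d=1$, so by the degree constraint \eqref{rfbf8h3hf983j03j} its (necessarily $\Zbb_2$-graded) ideal $\Ic\subset\Cbb[x|\q]$ is generated by homogeneous polynomials of the form $P^\al(x|\q)=P^{\al|0}(x)+\sum_a P^{\al|a}\q_a$, with $P^{\al|0}$ a linear form in $x$, each $P^{\al|a}$ a constant, and no terms of $\q$-degree $\geq 2$ (those would require coefficients of negative degree). Since $\Ic$ is $\Zbb_2$-graded it contains the even part $P^{\al|0}(x)$ and the odd part $\sum_a P^{\al|a}\q_a$ of each generator separately, so $\Ic$ is in fact generated by the pure-parity linear forms $\{P^{\al|0}(x)\}_\al$ together with $\{\sum_a P^{\al|a}\q_a\}_\al$.

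Next I would normalise. The even forms cut out a linear subspace $V\cong\Pbb^{m'}_\Cbb\subset\Pbb^m_\Cbb$, and the constant matrix $(P^{\al|a})$ spans a subspace $W\subseteq\Cbb^n$; with the description of $T^*_{V,-}$ recalled just before the Proposition this gives $T^*_{V,-}\cong(\Cbb^n/W)\otimes\Oc_V(-1)\cong\oplus^{n'}\Oc_V(-1)$ with $n':=n-\dim W$. Choosing $g\in\GL(m+1,\Cbb)\times\GL(n,\Cbb)$ that sends the span of the $P^{\al|0}$ to $\langle x_{m'+1},\dots,x_m\rangle$ and $W$ to $\langle\q_{n'+1},\dots,\q_n\rangle$, and observing that such a $g$ commutes with the scaling action of $\mathbb G_m$ and therefore descends to an automorphism of $\Pbb^{m|n}_\Cbb=e_{(\Pbb^m_\Cbb,\,\oplus^n\Oc(-1))}$, one obtains that $g(\Vc)$ is the coordinate subsuperspace with ideal $(x_{m'+1},\dots,x_m,\q_{n'+1},\dots,\q_n)$, i.e.\ $g(\Vc)=\Pbb^{m'|n'}_\Cbb$. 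By Lemma \ref{rhfuhf8hf983j0fj30} applied with $(m',n')$ this is the split model $e_{(\Pbb^{m'}_\Cbb,\,\oplus^{n'}\Oc(-1))}$, so $g(\Vc)$ — and hence $\Vc$ — is split.

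I expect the main obstacle to be the first step: arguing cleanly that the ideal of a linear subvariety really is generated by pure-parity linear forms, so that no "exotic" linear subvariety escapes the reduction. This rests on the convention that subvarieties of $\Pbb^{m|n}_\Cbb$ are defined by $\Zbb_2$-graded ideals, combined with the observation that for $d=1$ the constraint \eqref{rfbf8h3hf983j03j} leaves only $x$-linear and $\q$-linear (constant-coefficient) contributions; once this is in place the rest is routine. (When every $P^{\al|a}$ vanishes the embedding of supergeometries is even and the conclusion already follows from Example \ref{9008f9gf73f}; the point of the present argument is that it also covers the non-even case $n'<n$, and does so without appeal to Van de Ven's theorem, as anticipated in the remark following Example \ref{9008f9gf73f}.)
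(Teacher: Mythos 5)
Your proof is correct, but it takes a genuinely different route from the paper's. Both start from the same key observation: for $d=1$, the degree constraint \eqref{rfbf8h3hf983j03j} forces $P^{\al|\mu}=0$ for $|\mu|>1$, so generators of $\Ic$ have no quadratic or higher $\q$-dependence. From there the paths diverge. The paper simply reads off that the system of generators $F$ already has maximal splitting degree $m_{F^\pm}=\infty$ (since $\xi^{m'}(F^\pm)=0$ for all $m'\geq 2$ when the even generator is purely degree $0$ and the odd one purely degree $1$) and closes by one application of Lemma \ref{krknjviurvburnvien}, using the chain of equivalences \eqref{rhf4gf78hf983jf389fh39hf3}. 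You instead exploit the $\Zbb_2$-gradedness of $\Ic$ (Definition \ref{rhf9h49f89f83jf039j}(i)) to separate the generators into pure-parity linear forms, normalise them by an element of $\GL(m+1,\Cbb)\times\GL(n,\Cbb)$ acting on $\Pbb^{m|n}_\Cbb$, and identify $\Vc$ with a coordinate subsuperspace $\Pbb^{m'|n'}_\Cbb$, which is split by Lemma \ref{rhfuhf8hf983j0fj30}. Both correctly avoid Van de Ven's theorem (which was the point of the paper presenting a proof separate from Example \ref{9008f9gf73f}), and both cover the non-even case where $n'<n$. What your approach buys is a concrete normal form for linear subvarieties (they are, up to $\GL$, coordinate subsuperspaces), which is a stronger statement than mere splitness; what the paper's approach buys is brevity and a demonstration of how the splitting-degree formalism it has set up can be applied with essentially no extra work.
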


\noindent
By Lemma \ref{rhfuhf8hf983j0fj30}, the statement of Lemma \ref{rfh98fh98jf83j0j30} above is a particular instance of the more general statement in Example \ref{9008f9gf73f}. In Example \ref{9008f9gf73f} however it was necessary reference a classical result of Van de Ven on the nature of linear subvarieties of projective spaces. We will present below a simple argument which does not require Van de Ven's result. 
\\\\
\emph{Proof of Proposition $\ref{rfh98fh98jf83j0j30}$.}
Let $\Vc\subset \Pbb^{m|n}_\Cbb$ be given by the ideal $\Ic$ and generators $F = \big(\{P^{\al}\}_{\al\in I}\big)$. If $\Vc$ is linear, $d = 1$. Then from \eqref{rfbf8h3hf983j03j} we see that $\deg P^{\al|\mu} < 0$ for $|\mu|> 1$, which means $P^{\al|\mu} = 0$ for $|\mu|> 1$. Hence the minimal non-splitting degree of $F$ is $\8$, i.e., $m_{F^\pm} =\8$ (c.f., \eqref{rhf4gf78hf983jf389fh39hf3}). In using that $\Pbb_\Cbb^{m|n}$ is split by Lemma \ref{rhfuhf8hf983j0fj30}, this proposition follows from Lemma \ref{krknjviurvburnvien}. 
\qed

\subsection{The Rational Normal Curve}
Consider a subvariety $V\subset \Pbb_\Cbb^m$ of degree $d$, defined by a homogeneous, prime ideal $\big(\{P^{\al|0}(x)\}_{\al\in I}\big)$. To any $\lam\in \Cbb$ consider the ideal $\Ic_\lam$ generated by 
\begin{align}
P^\al(x|\q) = P^{\al|0}(x) + \lam \q_1\cdots\q_d. 
\label{rhf49hf98f83j093jf903jf0}
\end{align}
Then $\Ic_\lam$ will define a degree-$d$ subvariety $\Vc_\lam\subset \Pbb_\Cbb^{m|d}$, with $(\Vc_\lam)_\red = V$. A natural question to ask is whether $\Vc_\lam$ so described is split or not. We address this in the case where $V\subset \Pbb_\Cbb^d$ is the rational normal curve, i.e., a degree-$d$ embedding of $\Pbb_\Cbb^1$. 

\begin{THM}\label{orohfuirgfuirhf98h49}
For $\lam\in \Cbb$, let $\Vc_\lam\subset \Pbb_\Cbb^{d|d}$ be given by $F = P(x) + \lam \q_1\cdots \q_d$, where $(V_\lam)_\red\subset \Pbb_\Cbb^d$, defined by $(P(x))$, is the rational normal curve of degree $d$. Then:
\begin{enumerate}[(i)]
	\item if $d = 2$, $\Vc_\lam$ will be split if and only if $\lam = 0$;
	\item if $d\neq 2$, $\Vc_\lam$ is split for any $\lam$.
\end{enumerate}
\end{THM}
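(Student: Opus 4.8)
The plan is to exploit the ideal--sheaf picture of embeddings, treating $\Vc_\lam$ as a submanifold of the split supermanifold $\Pbb^{d|d}_\Cbb=e_{(\Pbb^d_\Cbb,\oplus^d\Oc_{\Pbb^d_\Cbb}(-1))}$ (Lemma~\ref{rhfuhf8hf983j0fj30}). The generator $F=P(x)+\lam\q_1\cdots\q_d$ carries no term linear in the $\q$'s, so by the discussion following \eqref{rfbf8h3hf983j03j} the embedding of supergeometries $f=(i,f^\sharp):(V,i^*\oplus^d\Oc_{\Pbb^d_\Cbb}(-1))\subset(\Pbb^d_\Cbb,\oplus^d\Oc_{\Pbb^d_\Cbb}(-1))$ is \emph{even}, with $V$ the rational normal curve ($V\cong\Pbb^1_\Cbb$ and $i^*\Oc_{\Pbb^d_\Cbb}(1)=\Oc_{\Pbb^1_\Cbb}(d)$) and odd conormal bundle $T^*_{V,-}=\oplus^d\Oc_{\Pbb^1_\Cbb}(-d)$. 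Thus $\Vc_\lam$ is a supermanifold modelled on $(\Pbb^1_\Cbb,\oplus^d\Oc_{\Pbb^1_\Cbb}(-d))$ (as one checks in a local chart, using that modulo the defining equations $P(x)$ may be eliminated against $-\lam\q_1\cdots\q_d$), embedded in the split model over $f$, and I will read off splitness from Theorem~\ref{ldfjnvnjfjdkkskskdkd} and Lemma~\ref{krknjviurvburnvien}.

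\emph{Part (ii).} For $d=1$ one has $\Vc_\lam=\Pbb^{1|1}_\Cbb$, trivially split. For $d\geq3$ the rational normal curve $V$ has codimension $d-1\geq2$ in $\Pbb^d_\Cbb$, so $\mathcal Hom_{\Oc_{\Pbb^d_\Cbb}}(\Ic_V,\Oc_{\Pbb^d_\Cbb})=\Oc_{\Pbb^d_\Cbb}$. Since $\wedge^kT^*_{\Pbb^d_\Cbb,-}$ is a direct sum of copies of $\Oc_{\Pbb^d_\Cbb}(-k)$, and $N^*_{V/\Pbb^d_\Cbb,(\pm)^k}$ is either $\Ic_V$ ($k$ even) or $\Ic_V\otimes\oplus^d\Oc_{\Pbb^d_\Cbb}(-1)$ ($k$ odd), the space $\Hom_{\Oc_{\Pbb^d_\Cbb}}(N^*_{V/\Pbb^d_\Cbb,(\pm)^k},\wedge^kT^*_{\Pbb^d_\Cbb,-})$ is a direct sum of copies of $H^0(\Pbb^d_\Cbb,\Oc_{\Pbb^d_\Cbb}(-k))$ or $H^0(\Pbb^d_\Cbb,\Oc_{\Pbb^d_\Cbb}(1-k))$, all of which vanish for $k\geq2$. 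Hence condition \emph{(ii)} of Theorem~\ref{ldfjnvnjfjdkkskskdkd} holds for every $k\geq2$, and that theorem forces $\Vc_\lam$ to be split for all $\lam$.

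\emph{Part (i).} When $d=2$, $V\subset\Pbb^2_\Cbb$ is the conic $\{P=0\}$, a hypersurface, and the hypotheses of Theorem~\ref{ldfjnvnjfjdkkskskdkd} fail precisely at $k=2$ (there $\mathcal Hom_{\Oc_{\Pbb^2_\Cbb}}(\Ic_V,\wedge^2T^*_{\Pbb^2_\Cbb,-})\cong\Oc_{\Pbb^2_\Cbb}$, and the corresponding conormal Hom--space on $V$ is $H^0(\Pbb^1_\Cbb,\Oc_{\Pbb^1_\Cbb})\cong\Cbb$); this is the single potential obstruction. To settle it I will use Lemma~\ref{krknjviurvburnvien}: here $\Ic_V=(P)$ is invertible, the odd side always admits an $F^-$ of maximal splitting degree $\8$ (the inclusion $N^*_{V/\Pbb^2_\Cbb,-}\hookrightarrow\Ic^-_\lam$ lands in exterior degree one), so $\Vc_\lam$ is split iff there is an $F^+\in\widetilde{\Hom}_{\Oc_{\Pbb^2_\Cbb}}(\Ic_V,\Ic^+_\lam)$ with $m_{F^+}=\8$. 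Expanding $F\cdot\wedge^\bt(\oplus^2\Oc_{\Pbb^2_\Cbb}(-1))$ gives $\Ic^+_\lam=\{aP+(cP+\lam a)\q_1\q_2:a,c\in\Oc_{\Pbb^2_\Cbb}\}$; in particular the purely even part is $\Ic^+_\lam\cap\Oc_{\Pbb^2_\Cbb}=\Ic_V$ if $\lam=0$ and $\Ic^+_\lam\cap\Oc_{\Pbb^2_\Cbb}=\Ic_V^2$ if $\lam\neq0$. Now $m_{F^+}=\8$ forces $F^+(\Ic_V)\subseteq\Ic^+_\lam\cap\Oc_{\Pbb^2_\Cbb}$, while $F^+\equiv\id_{\Ic_V}\bmod\Jc^2_{T^*_{\Pbb^2_\Cbb,-}}$ forces the image to contain $P$; for $\lam\neq0$ this would give $P\in\Ic_V^2$, which is absurd, so no such $F^+$ exists and $\Vc_\lam$ is non-split. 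For $\lam=0$, $\Ic_0=\Ic_V\cdot\wedge^\bt(\oplus^2\Oc_{\Pbb^2_\Cbb}(-1))$ and $\Vc_0=e_{(\Pbb^1_\Cbb,\oplus^2\Oc_{\Pbb^1_\Cbb}(-2))}$ is split. Via Lemma~\ref{krknjviurvburnvien} and \eqref{rhf4gf78hf983jf389fh39hf3} this reproduces the non-splitness of Green's superspace quadric in the spirit of Witten's heuristic.

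I expect part (i) to be the delicate point: the bookkeeping identifying $\Ic^+_\lam\cap\Oc_{\Pbb^2_\Cbb}$ with $\Ic_V^2$ when $\lam\neq0$, and, more essentially, making fully rigorous that this obstruction \emph{cannot} be removed by any choice of splitting homomorphism --- equivalently, that the primary obstruction of $\Vc_\lam$ is a nonzero multiple of $\lam$ in the one--dimensional space $H^1\big(\Pbb^1_\Cbb,\mathcal Hom_{\Oc_{\Pbb^1_\Cbb}}(T^*_{\Pbb^1_\Cbb},\wedge^2T^*_{V,-})\big)\cong H^1(\Pbb^1_\Cbb,\Oc_{\Pbb^1_\Cbb}(-2))$. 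For part (ii) the only external input is the vanishing of $H^0(\Pbb^d_\Cbb,\Oc_{\Pbb^d_\Cbb}(n))$ for $n<0$ together with $\mathcal Hom_{\Oc_{\Pbb^d_\Cbb}}(\Ic_V,\Oc_{\Pbb^d_\Cbb})=\Oc_{\Pbb^d_\Cbb}$ for $V$ of codimension $\geq2$, which is exactly why $d=2$ is the borderline case.
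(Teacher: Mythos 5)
Your proof is correct, and it takes a genuinely different route from the paper on both halves of the theorem.

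For part (ii), the paper verifies the \emph{embedded} vanishing criterion, Theorem~\ref{ldfjnvnjfjdkkskskdkd}(i), by invoking the classical computation $\nu_{C/\Pbb^d_\Cbb}\cong\Oc_{\Pbb^1_\Cbb}(d+2)^{\oplus d-1}$ for the rational normal curve and then observing $d^2>d+2$ for $d>2$. You instead verify the \emph{ambient} criterion, Theorem~\ref{ldfjnvnjfjdkkskskdkd}(ii), using only that $V$ has codimension $d-1\geq 2$ in $\Pbb^d_\Cbb$ (hence $\mathcal Hom_{\Oc_{\Pbb^d_\Cbb}}(\Ic_V,\Oc_{\Pbb^d_\Cbb})=\Oc_{\Pbb^d_\Cbb}$) together with $H^0(\Pbb^d_\Cbb,\Oc_{\Pbb^d_\Cbb}(n))=0$ for $n<0$. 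Your argument never uses the shape of the normal bundle of the rational normal curve and in fact shows that the same conclusion holds for \emph{any} even embedding over a subvariety of codimension $\geq 2$ in $\Pbb^d_\Cbb$ with this odd conormal, which is a mild strengthening. The paper's route makes the $d=2$ threshold visible as the inequality $d^2>d+2$; yours makes it visible as the codimension threshold at which $\mathcal Hom(\Ic_V,\Oc)$ jumps. For part (i), the paper proceeds cohomologically: it derives the restricted normal bundle sequence $0\to\Oc_{\Pbb^1_\Cbb}(2)\to\Oc_{\Pbb^1_\Cbb}(3)^{\oplus 2}\to\Oc_{\Pbb^1_\Cbb}(4)\to0$ via Grauert--M\"ulich, dualizes, takes $\mathcal Hom(-,\wedge^2 T^*_{V,-})$, shows the connecting map $\dt$ in the resulting long exact sequence is an isomorphism $\Cbb\to\Cbb$, and then identifies the primary obstruction of $\Vc_\lam$ with $\dt(\lam)$ via Theorem~\ref{rf8hg984h9g8gj4j0}. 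You instead work purely at the ideal-sheaf level through Lemma~\ref{krknjviurvburnvien}: any $F^+$ with maximal splitting degree $\infty$ must send $P$ into $\Ic^+_\lam\cap\Oc_{\Pbb^2_\Cbb}$, and your computation that this intersection equals $\Ic_V^2$ when $\lam\neq0$ produces the contradiction $P\in\Ic_V^2$. Both are rigorous; your argument is more elementary (no Grauert--M\"ulich, no balanced-bundle facts, no cohomology exact sequence), while the paper's additionally identifies the obstruction space explicitly as $H^1(\Pbb^1_\Cbb,\Oc_{\Pbb^1_\Cbb}(-2))\cong\Cbb$ and locates $\om(\Vc_\lam)$ there as a nonzero multiple of $\lam$ --- the quantitative refinement you yourself flagged as the point left implicit in your approach.
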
 

\begin{REM}
\emph{Theorem \ref{orohfuirgfuirhf98h49}\emph{(i)} was also addressed by Onishchik and Bunegina in \cite{ONIBUG}. There the authors argued, by reference to transition data, that the superspace quadric in $\Pbb^{2|2}_\Cbb$ is non-split. We recover these results in Theorem \ref{orohfuirgfuirhf98h49}\emph{(i)} without recourse to transition data.}
\end{REM}

\begin{proof}
Let $\Ic_\lam$ be the ideal defining $\Vc_\lam$. The case $d= 1$ is addressed in Proposition \ref{rfh98fh98jf83j0j30}. For $d>1$, observe that the embedding of models $(V, T^*_{V, -})\subset (\Pbb^d_\Cbb, \oplus^d \Oc_{\Pbb^d_\Cbb}(-1))$ will be even. Since $i: V = (\Vc_\lam)_\red\subset \Pbb^d_\Cbb$ is the degree-$d$ embedding of the rational normal curve, $V$ is isomorphic to $\Pbb^1_\Cbb$ and $T^*_{V, -} \cong i^*\oplus^d\Oc_{\Pbb^d_\Cbb}(-1) = \oplus^d\Oc_{\Pbb^1_\Cbb}(-d)$. The model for the degree-$d$ subvariety $\Vc_\lam$ is therefore $(\Pbb^1_\Cbb, \oplus^d\Oc_{\Pbb^1_\Cbb}(-d))$. We will now focus on part \emph{(i)} and so set $d = 2$. The model for $\Vc_\lam$ in degree $2$ is $(\Pbb^1_\Cbb, \oplus^2\Oc_{\Pbb^1_\Cbb}(-2))$. Since $\Pbb_\Cbb^{2|2}$ is split we can use Theorem \ref{h89h890309i390} to evaluate the obstructions to splitting. We firstly recall some classical theory: 
\begin{enumerate}[$\bt$]
	\item any holomorphic vector bundle on $\Pbb_\Cbb^1$ will split into a sum of holomorphic line bundles (see \cite[p. 12]{OSS});
	\item a rank $r$ vector bundle $E$ on a rational curve is said to be \emph{balanced} if $\Ec \cong \Oc_{\Pbb_\Cbb^1}(k)^{\oplus s} \oplus \Oc_{\Pbb_\Cbb^1}(k-1)^{\oplus r-s}$, where $\Ec$ is the sheaf of holomorphic sections of $E$;
	\item for a rational curve $C\subset \Pbb_\Cbb^n$, the restriction $\wedge^\ell T_{\Pbb_\Cbb^n}|_C$ is balanced for all $\ell$. This a consequence of the Grauert-M\"ulich theorem (see \cite[p. 104]{OSS}). 
\end{enumerate}
Now $T_{\Pbb_\Cbb^2}$ is a holomorphic vector bundle on $\Pbb_\Cbb^2$ of degree $3$. Hence its restriction to $V$ will have degree $6$. From the above facts, this leads therefore to:
\[
T_{\Pbb_\Cbb^2}|_{V} \cong \Oc_{\Pbb_\Cbb^1}(3)\oplus \Oc_{\Pbb_\Cbb^1}(3).
\]
In using now that $T_{V} = \Oc_{\Pbb_\Cbb^1}(2)$ and $\nu_{V/\Pbb_\Cbb^2} = \Oc_{\Pbb_\Cbb^1}(4)$ the normal bundle sequence to the embedding $V\subset \Pbb_\Cbb^2$ is:
\begin{align}
0 \lra \Oc_{\Pbb_\Cbb^1}(2) \lra  \Oc_{\Pbb_\Cbb^1}(3)\oplus \Oc_{\Pbb_\Cbb^1}(3)\lra  \Oc_{\Pbb_\Cbb^1}(4)\lra0.
\label{rfg784hf9hf83j0fj390fj093fj03}
\end{align}
With $T^*_{V, -} = \Oc_{\Pbb_\Cbb^1}(-2)^{\oplus 2}$, dualising \eqref{rfg784hf9hf83j0fj390fj093fj03} and applying $\mathcal Hom_{\Oc_{V_\red}}\big(-, \wedge^2T_{V, -}^*\big)$, the induced sequence on cohomology gives,
\[
0 \lra \Cbb \stackrel{\dt}{\lra} \Cbb \lra 0.
\]
The boundary map $\dt$ above can be identified with $\dt_2$ in \eqref{cfgjgjgjgjfjjfhfgbgjf} since the embedding $\Vc_\lam\subset \Pbb^{d|d}_\Cbb$ is even. Note in particular that $\dt$ is an isomorphism. Now recall that $\Vc_\lam$ is defined by the pair $(\Ic, F)$ where $F = P(x) + \lam \q_1\q_2$. The term $h_F = \lam\q_1\q_2$ pulls back to a global homomorphism in $\Hom_{\Oc_V}\big(\nu^*_{V/\Pbb_\Cbb^2}, \wedge^2T^*_{V, -}\big)\cong \Cbb$ and so can be identified with $\lam$. By Theorem \ref{rf8hg984h9g8gj4j0} we see that $\dt(\lam)$ will be the obstruction class of $\Vc_\lam$. As we have observed, $\dt$ is an isomorphism so therefore if $\lam\neq 0$, $\Vc_\lam$ will be non-split. This settles part \emph{(i)}. Regarding \emph{(ii)}, note that the result holds trivially from Theorem \ref{ldfjnvnjfjdkkskskdkd} when $d$ is odd, since $\nu^*_{V/\Pbb_\Cbb^d, -} = (0)$. For $d$ even, we firstly recall the classical fact:
\begin{enumerate}[$\bt$]
	\item let $C\subset \Pbb_\Cbb^d$ be a rational curve of degree-$d$. Then $\nu_{C/\Pbb_\Cbb^d} \cong \Oc_{\Pbb_\Cbb^1}(d+2)^{\oplus d-1}$. 
\end{enumerate}
Hence,
\[
\mathcal Hom_{\Oc_{V}}\big(\nu^*_{V/\Pbb_\Cbb^d}, \wedge^dT_{V, -}^*\big)\cong \Oc_{\Pbb_\Cbb^1} (d + 2 - d^2)^{\oplus (d-1)}. 
\]
Since $d^2 > d + 2$ for $d> 2$, the above sheaf cannot have any global sections. Part \emph{(ii)} now follows from Theorem \ref{ldfjnvnjfjdkkskskdkd}. 
\end{proof}

\section*{Concluding Remarks}

\noindent
The obstruction classes to splitting supermanifolds appear prominently throughout this article. They are indispensable to the understanding of complex supermanifolds but their application in theoretical physics remains unclear. More recently, they have been considered in the framework of superstring theory, arising there as the impediment to the calculation of the superstring scattering amplitude to loop orders greater than two. We propose another potential application of the obstruction classes which would be interesting to pursue in future work. 
\\\\
In the paper by Sethi in \cite{SETHI} and Aganagic and Vafa in \cite{AGAVAFA}, certain superspace quadrics are proposed as mirrors for the rigid K\"ahler manifolds appearing in Landau-Ginzberg sigma models. Based on Theorem \ref{orohfuirgfuirhf98h49}\emph{(i)}, we might expect these superspace quadrics to be non-split and indeed this is what is deduced in a subsequent paper \cite{BETTQUAD}. In which case, the mirror map described by Sethi and Aganagic-Vafa ought to exchange the K\"ahler parameter with  obstruction classes to splitting the mirror superspace quadric. This could lead to interesting interplay between K\"ahler geometry and complex supergeometry and so would be an interesting line of research to pursue.

\appendix
\numberwithin{equation}{section}

\section{Proof of Lemma \ref{rbf8gf87h4f98h389fj0j30}}
\label{rf67f47gf87hf983hf03j0}

\noindent 
Let $\Xfr = (X, \Oc_\Xfr)$ be a supermanifold. We wish to show: if the following exact sequences of sheaves on $X$:
\begin{align}
0 \ra \Jc_\Xfr\ra \Oc_\Xfr \ra \Oc_X\ra0
\label{fiorjvropjvprpvrp}
\end{align}
and 
\begin{align}
0\ra \Jc^2_{T^*_{X, -}} \ra \Jc_{T^*_{X, -}} \ra T^*_{X, -}\ra0
\label{fjgbnbnngthrrorff}
\end{align}
are split exact, then $\Xfr$ is split. That is, there exists a global isomorphism $\Oc_\Xfr \stackrel{\sim}{\ra}\wedge^\bt T^*_{X, -}$. A splitting of \eqref{fiorjvropjvprpvrp} shows that $\Oc_\Xfr$ will admit the structure of an $\Oc_X$-algebra. Then with a splitting of \eqref{fjgbnbnngthrrorff}, we are in the situation where: we have a map $j: T^*_{X, -}\ra \Oc_\Xfr$ of an $\Oc_X$-module into the unital $\Oc_X$-algebra $\Oc_\Xfr$ such that $j(\xi)^2 = 0$, by supercommutativity of $\Oc_\Xfr$. Hence by the universal property of exterior algebras, there will exist a unique algebra morphism $\psi : \wedge^\bt T^*_{X, -}\ra \Oc_\Xfr$ such that the following diagram commutes:
\begin{align}
\xymatrix{
T^*_{X, -} \ar[dr]_j \ar[rr] & & \wedge^\bt T^*_{X, -}\ar[dl]^\psi
\\
& \Oc_\Xfr & 
}
\label{h98rhf89j4f98j0}
\end{align}
It remains to show that $\psi$ is an isomorphism. To see this, consider a cover $(U_i)$ for $X$ and local splittings $\vp_i : \wedge^\bt T^*_{X, -}(U_i)\stackrel{\sim}{\ra} \Oc_\Xfr(U_i)$ which exist since $\Xfr$ is a supermanifold. As $\psi$ comes from the universal property, $\psi|_{U_i} :  \wedge^\bt T^*_{X, -}(U_i)\ra \Oc_\Xfr(U_i)$ will inherit this property, i.e., it will be unique so there will exist a morphism $\phi_i : \wedge^\bt T^*_{X, -}(U_i)\ra\wedge^\bt T^*_{X, -}(U_i)$ commuting the following diagram:
\begin{align}
\xymatrix{
\ar[dr]_{\vp_i} \wedge^\bt T^*_{X, -}(U_i) \ar[rr]^{\phi_i} & & \wedge^\bt T^*_{X, -}(U_i)\ar[dl]^{\psi|_{U_i}}
\\
& \Oc_\Xfr(U_i)
}
\label{kmdklnuebieoe}
\end{align}
Now $\vp_i$ is an isomorphism and commutativity of \eqref{kmdklnuebieoe} gives $\vp_i = \psi|_{U_i}\circ \phi_i$. Hence both $\psi|_{U_i}$ and $\phi_i$ must be injective. Since $\phi_i$ is an injective homomorphism of sheaves, it will be injective on stalks. Then at the level of stalks, note that $\phi_i$ will will be an injective morphism of a finite rank algebra into itself, so it will therefore be an isomorphism. Hence $\phi_i$ is an isomorphism of sheaves. From \eqref{kmdklnuebieoe} now, we see that $\psi|_{U_i}$ can be written as a composition of isomorphisms. Therefore $\psi|_{U_i}$ is an isomorphism for each $U_i$. It follows that $\psi$ itself is an isomorphism of sheaves.\qed

\bibliographystyle{alpha}
\bibliography{Bibliography}

\begin{thebibliography}{Bet18b}

\bibitem[AV04]{AGAVAFA}
M.~Aganagic and C.~Vafa.
\newblock Mirror symmetry and supermanifolds.
\newblock {\em Adv. Theor. Math. Phys.}, (8):939--54, 2004.

\bibitem[Bet18a]{BETTHIGHOBS}
K.~Bettadapura.
\newblock Higher obstructions of complex supermanifolds.
\newblock {\em SIGMA}, (094):12p, 2018.

\bibitem[Bet18b]{BETTQUAD}
K.~Bettadapura.
\newblock Projective superspace varieties, superspace quadrics and non
  splitting.
\newblock Available at:
  {\href{https://arxiv.org/abs/1810.10200}{arXiv:1810.10200}}, 2018.

\bibitem[BO96]{ONIBUG}
V.~A. Bunegina and A.~L. Onishchik.
\newblock Homogeneous supermanifolds associated with the complex projective
  line.
\newblock {\em J. Math. Sciences}, 82(4), 1996.

\bibitem[DM99]{QFAS}
P.~Deligne and J.~W. Morgan.
\newblock {\em Quantum Fields and Strings: A course for Mathematicians},
  volume~1, chapter Notes on Supersymmetry (following Joseph Bernstein), pages
  41--97.
\newblock American Mathematical Society, Providence, 1999.

\bibitem[DW12]{DW1}
R.~Donagi and E.~Witten.
\newblock Supermoduli space is not projected.
\newblock In {\em String-Math 16-21}, 2012.
\newblock Available at:
  {\href{http://arxiv.org/abs/1304.7798}{arXiv:1304.7798}} [hep-th].

\bibitem[GR84]{GRAUREM}
H.~Grauert and R.~Remmert.
\newblock {\em Coherent Analytic Sheaves}.
\newblock Springer-Verlag, 1984.

\bibitem[Gre82]{GREEN}
P.~Green.
\newblock On holomorphic graded manifolds.
\newblock {\em Proc. Amer. Math. Soc.}, 85(4):587--590, 1982.

\bibitem[Man88]{YMAN}
Y.~Manin.
\newblock {\em Gauge Fields and Complex Geometry}.
\newblock Springer-Verlag, 1988.

\bibitem[Oni99]{ONISHCLASS}
A.~L. Onishchik.
\newblock On the classification of complex analytic supermanifolds.
\newblock {\em Lobachevskii J. Math.}, pages 47--70, 1999.

\bibitem[OSS10]{OSS}
C.~Okonek, M.~Schneider, and H.~Spindler.
\newblock {\em Vector Bundles on Complex Projective Spaces}.
\newblock Modern Birkhaeuser Classics, 2010.

\bibitem[Set94]{SETHI}
S.~Sethi.
\newblock Supermanifolds, rigid manifolds and mirror symmetry.
\newblock {\em Nuc. Phys. B}, 430(1):31--50, 1994.

\bibitem[VdV58]{VDV}
A.~Van~de Ven.
\newblock A property of algebraic varieties in complex projective spaces.
\newblock {\em Colloque G{\'e}om. Diff. Globale, Bruxelles}, 1958.

\bibitem[Wit12]{WITTNS}
E.~Witten.
\newblock Notes on supermanifolds and integration.
\newblock Available at:
  {\href{https://arxiv.org/abs/1209.2199}{arXiv:1209.2199}} [hep-th], 2012.

\end{thebibliography}

\hfill
\\
\noindent
\small
\textsc{
Kowshik Bettadapura 
\\
\emph{Yau Mathematical Sciences Center} 
\\
Tsinghua University
\\
Beijing, 100084, China}
\\
\emph{E-mail address:} \href{mailto:kowshik@mail.tsinghua.edu.cn}{kowshik@mail.tsinghua.edu.cn}

\end{document}